\documentclass[UTF-8,reqno]{amsart}
\usepackage{enumerate}
\usepackage{mhequ}
\usepackage[margin=1.2in]{geometry}
\usepackage{amssymb,url,color, booktabs,nccmath}
\usepackage{mathrsfs}
\usepackage{enumitem}
\usepackage{graphicx}
\usepackage{tikz}
\usetikzlibrary{shapes,snakes}
\usetikzlibrary{calc}
\usetikzlibrary{decorations.shapes}
\usepackage{color}
\usepackage[colorlinks=true]{hyperref}
\hypersetup{
    linkcolor=blue,          
    citecolor=red,        
    filecolor=blue,      
    urlcolor=cyan
}

\definecolor{darkergreen}{rgb}{0.0, 0.5, 0.0}

\numberwithin{equation}{section}

\newcommand{\be}{\begin{eqnarray}}
\newcommand{\ee}{\end{eqnarray}}
\newcommand{\ce}{\begin{eqnarray*}}
\newcommand{\de}{\end{eqnarray*}}
\newtheorem{theorem}{Theorem}[section]
\newtheorem{lemma}[theorem]{Lemma}
\newtheorem{remark}[theorem]{Remark}
\newtheorem{definition}[theorem]{Definition}
\newtheorem{proposition}[theorem]{Proposition}
\newtheorem{Examples}[theorem]{Example}
\newtheorem{corollary}[theorem]{Corollary}
\newtheorem{assumption}{Assumption}[section]

\newenvironment{nouppercase}{%
  \renewcommand{\uppercasenonmath}[1]{}}{}

\def\[{{\Big[}}
\def\]{{\Big]}}
\def\<{{\langle}}
\def\>{{\rangle}}
\def\({{\Big(}}
\def\){{\Big)}}

\def\bx{{\mathbf{x}}}

\def\={&\!\!=\!\!&}

\def\1{{I}}

\def\geq{\geqslant}
\def\leq{\leqslant}
\def\ge{\geqslant}

\def\[{{\Big[}}
\def\]{{\Big]}}
\def\<{{\langle}}
\def\>{{\rangle}}
\def\({{\Big(}}
\def\){{\Big)}}

\def\bx{{\mathbf{x}}}

\def\={&\!\!=\!\!&}
\def\bt{\begin{theorem}}
\def\et{\end{theorem}}
\def\bl{\begin{lemma}}
\def\el{\end{lemma}}
\def\br{\begin{remark}}
\def\er{\end{remark}}
\def\bx{\begin{Examples}}
\def\ex{\end{Examples}}
\def\bd{\begin{definition}}
\def\ed{\end{definition}}
\def\bp{\begin{proposition}}
\def\ep{\end{proposition}}
\def\bc{\begin{corollary}}
\def\ec{\end{corollary}}

\def\geq{\geqslant}
\def\leq{\leqslant}
\def\ge{\geqslant}

\def\<{\langle} \def\>{\rangle}

\allowdisplaybreaks

\tikzset{
        dot/.style={circle,fill=black,inner sep=0pt, outer sep=0.7pt, minimum size=1mm},
        Phi/.style={white!40!red,thick,snake=coil,segment amplitude=0.6pt, segment length=2pt},
         Z/.style={black!40!green,thick,snake=coil,segment amplitude=0.6pt, segment length=2pt},
        C/.style={thick,black!20!blue},
          Cr/.style={thick,black!20!red},
            Cg/.style={thick,black!20!green},
       }

\usepackage[colorinlistoftodos,prependcaption,color=yellow,textsize=tiny]{todonotes}

\begin{document}

\title[Quantitative Error Estimates for Fluctuation Fields]
{\LARGE Quantitative Error Estimates for Learning Macroscopic Mobilities from Microscopic Fluctuations}

\author[Nicolas Dirr]{\large Nicolas Dirr}
\address[N. Dirr]{Cardiff School of Mathematics, Cardiff University, Cardiff, UK}
\email{dirrnp@cardiff.ac.uk} 

\author[Zhengyan Wu]{\large Zhengyan Wu}
\address[Z. Wu]{Department of Mathematics, Technische Universit\"at M\"unchen, Boltzmannstr. 3, 85748 Garching, Germany}
\email{wuzh@cit.tum.de}

\author[Johannes Zimmer]{\large Johannes Zimmer}
\address[J. Zimmer]{Department of Mathematics, Technische Universit\"at M\"unchen, Boltzmannstr. 3, 85748 Garching, Germany}
\email{jz@tum.de}

\begin{abstract}
We develop quantitative error estimates connecting microscopic fluctuation of interacting particle systems with the mobilities of their hydrodynamic limits. Focusing on the Symmetric Simple Exclusion Process and systems of independent Brownian particles, we provide explicit bounds for the discrepancy between the quadratic variation of fluctuation fields and the corresponding mobilities, in terms of time and spatial discretization parameters. In addition, we establish analogous error estimates for a class of fluctuating hydrodynamic stochastic PDEs with regularized coefficients. For stochastic PDEs with irregular square-root type coefficients, including Dean-Kawasaki type equations, we further identify the asymptotic behavior of the associated fluctuation structures within the framework of renormalized kinetic solutions. Our results provide quantitative insights into the relationship between microscopic fluctuation mechanisms and macroscopic mobilities, and contribute to a structured comparison between discrete particle systems and continuum fluctuating hydrodynamic descriptions.
\end{abstract}

\subjclass[2010]{60H15; 35R60}
\keywords{}

\date{\today}

\begin{nouppercase}
\maketitle
\end{nouppercase}

\setcounter{tocdepth}{1}
\tableofcontents

\section{Introduction}\label{sec-1}
In the macroscopic regime, the evolution of physical systems is often described by partial differential equations (PDEs), while at the microscopic level, one observes the dynamics of interacting particles, such as molecules. This dichotomy naturally leads to the question of how microscopic particle systems can give rise to macroscopic PDE models. While there is a rich body of work on error estimates for approximating PDEs via numerical schemes, much less attention has been devoted to quantifying the error between interacting particle system models and their corresponding macroscopic PDE descriptions. In particular, error estimates concerning certain observables defined from the particle dynamics and their macroscopic counterparts remain relatively scarce in the existing literature. 

In this paper, we investigate a class of PDEs exhibiting the following gradient flow structure:
\begin{equation}\label{PDE-0-intro}
\partial_t\bar{\rho}=-\frac{1}{2}\nabla\cdot\Big(m(\bar{\rho})\nabla\frac{\delta\mathcal{F}(\bar{\rho})}{\delta\bar{\rho}}\Big),\quad \bar{\rho}(0)=\rho_0, 
\end{equation}
where $\mathcal{F}$ denotes an energy functional and $\rho_0$ denotes the initial datum, and the operator $-\nabla\cdot(m(\bar{\rho})\nabla\cdot)$ represents the mobility operator associated with a given mobility function $m(\cdot)$. PDEs of the form \eqref{PDE-0-intro} arise naturally in the macroscopic description of various interacting particle systems. For instance, when 
\begin{align}\label{coefficient-ssep}
m(\bar{\rho})=\bar{\rho}(1-\bar{\rho}) \text{ and }  \mathcal{F}(\bar{\rho})=\int\bar{\rho}\log\bar{\rho}+(1-\bar{\rho})\log(1-\bar{\rho})\,dx, 
\end{align}
equation \eqref{PDE-0-intro} reduces to a diffusion equation, which can be interpreted as the hydrodynamic limit of the symmetric simple exclusion process (SSEP). 

The primary objective of this work is to investigate how the mobility operator $-\nabla\cdot(m(\bar{\rho})\nabla\cdot)$ describes the diffusion of the fluctuation field arising from the underlying microscopic particle models. In particular, we aim to derive quantitative error estimates that relate microscopic observables to their macroscopic counterparts encoded in the PDE structure.

Precisely speaking, for each $N \in \mathbb{N}_+$, let $\eta_t$ denote the SSEP on the discrete torus $\mathbb{T}^d_N$. The fluctuation field around its hydrodynamic limit $\bar{\rho}$ (the solution to \eqref{PDE-0-intro} with coefficients defined by \eqref{coefficient-ssep}) is given by
\begin{align*}
\bar{X}^{1,N}(t, dy) := N^{-d/2} \sum_{x \in \mathbb{T}^d_N} \big(\eta_t(x) - \bar{\rho}(x,t)\big)\, \delta_{x}(dy).
\end{align*}
For all $t > h > 0$ and for every test function $\phi \in C^{\infty}(\mathbb{T}^d)$, we aim to establish an error estimate of the form
\begin{align}\label{error-intro}
\Bigg|\frac{1}{h} \mathbb{E}\big(\bar{X}^{1,N}_1(t+h,\phi)& - \bar{X}^{1,N}_1(t,\phi)\big)^2 - \langle \nabla\phi, m(\bar{\rho}) \nabla\phi \rangle\Bigg| \leq C(\rho_0,\phi)\, \text{Error}(N,h),
\end{align}
where $\text{Error}(N,h)$ denotes the error depending on the grid size $1/N^d$ and time discretization $h$, and satisfies
$$
\limsup_{N \rightarrow \infty}\limsup_{h \rightarrow 0} \text{Error}(N,h) = 0.
$$

These advancements have established a rigorous framework for quantifying the errors in the limiting behavior conjectured in \cite{EDZ18}. This framework enables precise, finite-size-controlled reconstruction of mobility or diffusivity from non-equilibrium particle data, going beyond traditional equilibrium-based methods. Such error estimates are valuable in the context of macroscopic fluctuation theory (MFT) and non-equilibrium statistical mechanics \cite{BDGJL,Derrida}. By providing transport coefficients with rigorously quantified uncertainty, these estimates allow microscopic fluctuations to be systematically incorporated into MFT predictions with controlled error, evaluate model validity, and propagate microscopic noise into macroscopic predictions. As a result, this methodology not only enables the extraction of physically meaningful transport properties from particle systems, such as the SSEP, but also provides a principled approach to integrate finite-size and statistical uncertainties into theoretical and computational analyses of non-equilibrium systems, see \cite{EDZ18} for further details. 

On the other hand, as a continuous mesoscopic analogue of particle systems, the theory of fluctuating hydrodynamics provides a framework for capturing mesoscopic effects by introducing conservative stochastic PDE models. These models are driven by conservative noise terms designed to reproduce the correct fluctuation behavior of the underlying interacting particle systems. The equations of fluctuating hydrodynamics can be viewed as fluctuation corrections to the macroscopic PDEs. In particular, for PDEs with a gradient flow structure of the form \eqref{PDE-0-intro}, the fluctuation corrections are governed by the so-called fluctuation-dissipation relations, leading to the following stochastic PDE:
\begin{align}
\partial_t\rho_{\varepsilon}=-\frac{1}{2}\nabla\cdot\Big(m(\rho_{\varepsilon})\nabla\frac{\delta\mathcal{F}(\rho_{\varepsilon})}{\delta\rho_{\varepsilon}}\Big)-\varepsilon^{1/2}\nabla\cdot\Big(m^{1/2}(\rho_{\varepsilon})\circ\xi_{\delta(\varepsilon)}\Big),
\end{align}
where $\xi_{\delta(\varepsilon)}$ denotes a spatially correlated noise, $\circ$ represents Stratonovich integration, and the parameters $\varepsilon$ and $\delta(\varepsilon)$ depend on $N$, effectively encoding the fluctuation amplitude and the grid scale of the underlying particle system. As a point of comparison with the particle systems, we also investigate error estimates of the form \eqref{error-intro}, concerning the discrepancy between the diffusion quadratic variation of the fluctuation field $\varepsilon^{-1/2}(\rho_{\varepsilon} - \bar{\rho})$ and its corresponding mobility operator. 

\subsection{Main results} 
The main results of this paper consist of four parts. The first three concern quantitative error estimates for Brownian particles, the SSEP, and a regularized fluctuating hydrodynamics SPDE. In the case of the unregularized fluctuating hydrodynamics SPDE, the irregularity of the square-root coefficient prevents us from deriving comparable error estimates. Instead, we employ the framework of renormalized kinetic solutions to obtain a qualitative asymptotic description for SPDEs with such irregular coefficients.

We begin with the error estimate for Brownian particles. For each $N \in \mathbb{N}_+$, let $(B_i)_{i=1}^N$ be a sequence of i.i.d.\ Brownian motions on the torus $\mathbb{T}^d$. Let $\pi_N$ denote the associated empirical measure, and let $\bar{\rho}$ be its mean-field limit, i.e., the solution of the diffusion equation. Define
$$
\bar{\pi}^1_N := \sqrt{N}\,(\pi_N - \bar{\rho}).
$$
Our first result is as follows.
\begin{theorem}\label{thm-1-intro}
Under the above setting, for every test function $\phi \in C^\infty(\mathbb{T}^d)$, let $\bar{\pi}^{1,\phi}_N := \bar{\pi}^1_N(\phi)$. Then there exists a constant $C = C(\phi, \bar{\rho}) > 0$ such that, for all $0 < h < t$,
\begin{align*}
\left| \frac{1}{h} \, \mathbb{E} \left( \bar{\pi}^{1,\phi}_N(t+h) - \bar{\pi}^{1,\phi}_N(t) \right)^2 - \left\langle \nabla \phi, \bar{\rho}(t) \nabla \phi \right\rangle \right| \leq C(\phi, \rho_0) \, h.
\end{align*}
\end{theorem}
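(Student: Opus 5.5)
The plan is to compute the left-hand side exactly via Itô's formula, using that the particles are i.i.d.\ (no interaction). Writing $\bar{\pi}^{1,\phi}_N(t)=N^{-1/2}\sum_{i=1}^N(\phi(B_i(t))-\langle\bar{\rho}(t),\phi\rangle)$, the summands are i.i.d.\ and centred: $\bar\rho(t)$ is the law of $B_i(t)$, which solves $\partial_t\bar\rho=\frac12\Delta\bar\rho$ since the mean-field limit here is the heat equation. Hence
\begin{align*}
\mathbb{E}\big(\bar{\pi}^{1,\phi}_N(t+h)-\bar{\pi}^{1,\phi}_N(t)\big)^2=\mathbb{E}\big(\phi(B(t+h))-\phi(B(t))-\langle\bar\rho(t+h)-\bar\rho(t),\phi\rangle\big)^2 ,
\end{align*}
for a single Brownian motion $B$. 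This quantity is independent of $N$, which explains why the bound has no $N$-dependence.

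Next, I apply Itô's formula: $\phi(B(t+h))-\phi(B(t))=\int_t^{t+h}\nabla\phi(B_s)\cdot dB_s+\frac12\int_t^{t+h}\Delta\phi(B_s)\,ds=:M_h+A_h$. Also, $\langle\bar\rho(t+h)-\bar\rho(t),\phi\rangle=\mathbb{E}A_h$. So the increment equals $M_h+(A_h-\mathbb{E}A_h)$. Expanding the square gives three terms. By Itô's isometry and the Markov property, $\mathbb{E}M_h^2=\int_t^{t+h}\langle\bar\rho(s),|\nabla\phi|^2\rangle ds$. The cross and drift terms are controlled by Cauchy–Schwarz and $|A_h-\mathbb{E}A_h|\le h\|\Delta\phi\|_\infty$: $\mathbb{E}(A_h-\mathbb{E}A_h)^2\le h^2\|\Delta\phi\|_\infty^2$ and $|\mathbb{E}M_h(A_h-\mathbb{E}A_h)|\le h^{3/2}\|\nabla\phi\|_\infty\|\Delta\phi\|_\infty$.

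The cross term only gives $h^{3/2}$ this way, i.e.\ $h^{1/2}$ after dividing by $h$. This is the main obstacle for reaching the stated rate $h$. To fix it, I would write $\mathbb{E}[M_hA_h]=\frac12\int_t^{t+h}\mathbb{E}[M_{s}\Delta\phi(B_s)]ds$, with $M_s$ the stochastic integral from $t$ to $s$; the $\mathbb{E}M_h\,\mathbb{E}A_h$ part vanishes since $\mathbb{E}M_h=0$. For the inner expectation, I apply Itô's product rule to $M_s\Delta\phi(B_s)$. Its expectation equals $\int_t^s\mathbb{E}[\nabla\phi\cdot\nabla\Delta\phi(B_r)+\frac12 M_r\Delta^2\phi(B_r)]dr$, which is $O(s-t)$ with constants depending on $\|\phi\|_{C^4}$ (the $M_r$ term is bounded by $\|\Delta^2\phi\|_\infty\mathbb{E}|M_r|=O((r-t)^{1/2})$). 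Integrating over $s$ gives a cross term of order $h^2$.

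Finally, $\frac1h\int_t^{t+h}\langle\bar\rho(s),|\nabla\phi|^2\rangle ds-\langle\bar\rho(t),|\nabla\phi|^2\rangle$ is bounded by $\frac h2\sup_s|\langle\bar\rho(s),\frac12\Delta|\nabla\phi|^2\rangle|\le Ch\|\phi\|_{C^3}^2$, using the heat equation and mass conservation $\int\bar\rho=1$. Collecting the estimates gives the bound $C(\phi)h$, with $C$ depending only on $C^4$ norms of $\phi$ and on the probability density $\rho_0$ through its total mass. Note that $\langle\nabla\phi,\bar\rho\nabla\phi\rangle$ is exactly $\langle\bar\rho,|\nabla\phi|^2\rangle$, with the factor conventions matching diffusion generator $\frac12\Delta$. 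If the paper's Brownian motions instead have generator $\Delta$, only the constants change and the limit acquires a factor $2$ absorbed accordingly.
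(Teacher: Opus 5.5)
Your proof is correct, and it reaches the $O(h)$ rate by a genuinely different route from the paper.

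\textbf{The paper's route.} The paper works with the mild (Duhamel) form of $\bar{\pi}^{1,\phi}_N$ and splits the increment into three pieces:
\begin{itemize}
\item an $\mathcal{F}_0$-measurable initial-data term with test function $(S(t+h)-S(t))\phi$;
\item a stochastic integral over $[0,t]$ with integrand $(S(t+h-s)-S(t-s))\nabla\phi$;
\item a stochastic integral over $[t,t+h]$ with integrand $S(t+h-s)\nabla\phi$.
\end{itemize}
These pieces are mutually orthogonal in $L^2(\mathbb{P})$, so no cross term ever appears. The first two are $O(h^2)$ via $\|(S(h)-I)\psi\|_{L^\infty}\lesssim h$. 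The third gives $h\langle\bar\rho(t),|\nabla\phi|^2\rangle+O(h^2)$ once $\pi_N$ is replaced by $\bar\rho$ in expectation.

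\textbf{Your route.} You keep the drift and use the forward It\^o decomposition $M_h+A_h$. The price is the correlated cross term $\mathbb{E}[M_hA_h]$, which Cauchy--Schwarz only bounds by $h^{3/2}$. Your second It\^o expansion correctly upgrades it to $O(h^2)$. There is a shortcut: since $\mathbb{E}[M_s\,\Delta\phi(B_t)]=0$, you have $|\mathbb{E}[M_s\Delta\phi(B_s)]|=|\mathbb{E}[M_s(\Delta\phi(B_s)-\Delta\phi(B_t))]|\lesssim s-t$ directly by Cauchy--Schwarz, using only $C^3$ regularity of $\phi$.

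\textbf{What each buys.} Your reduction to a single Brownian motion gives the exact identity $\mathbb{E}(\bar{\pi}^{1,\phi}_N(t+h)-\bar{\pi}^{1,\phi}_N(t))^2=\mathrm{Var}(\phi(B(t+h))-\phi(B(t)))$. This explains transparently why there is no $N$-dependence and avoids semigroup estimates altogether. The paper's orthogonal Duhamel splitting, on the other hand, is exactly the template it reuses for the SSEP (with $S_N$ and the carr\'e du champ) and for the regularized SPDEs, where reduction to one independent particle is unavailable.

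\textbf{Correction to your closing remark.} The statement is only consistent if both the particles and $\bar\rho$ use the generator $\frac12\Delta$, as in \eqref{pi1n}; the $\Delta$ in \eqref{diffusion-eq} appears to be a typo. With generator $\Delta$ the limit would be $2\langle\nabla\phi,\bar\rho\nabla\phi\rangle$, and that factor $2$ cannot be absorbed into an $O(h)$ error. Your reading with $\frac12\Delta$ is the right one.
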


For the error estimates of the SSEP, we recall that $\bar{\rho}$ denotes its hydrodynamic limit and $\bar{X}^{1,N}$ its fluctuation field. A more detailed definition together with the basic framework of the SSEP will be presented in Section \ref{sec-2}. The main results concerning the SSEP are stated as follows. 

\begin{theorem}\label{thm-2-intro}
	For every $0 < h < t < T$, we have
	\begin{align*}
	\Big|\frac{1}{h}\mathbb{E}(\bar{X}^{1,N}(t+h,\phi)-\bar{X}^{1,N}(t,\phi))^2& - \langle \nabla\phi, \bar{\rho}(1-\rho_0) \nabla\phi \rangle \Big|\leq  C(\bar{\rho},\phi)\left(h + hN^{d-4}+N^{-1}\right). 
	\end{align*}
\end{theorem}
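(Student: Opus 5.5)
We would prove Theorem \ref{thm-2-intro} through the Dynkin martingale decomposition of the fluctuation field. Write $L_N$ for the SSEP generator, accelerated by $N^2$, and $\Delta_N$ for the discrete Laplacian. For fixed $\phi$, the process
\[
M^N_t(\phi)=\bar X^{1,N}(t,\phi)-\bar X^{1,N}(0,\phi)-\int_0^t \Big(N^{-d/2}\sum_x \eta_s(x)\Delta_N\phi(x/N)-N^{-d/2}\sum_x\partial_s\bar\rho(x,s)\phi(x/N)\Big)ds
\]
is a martingale. Its predictable quadratic variation is
\[
\langle M^N(\phi)\rangle_t=\int_0^t N^{-d}\sum_{x\sim y}N^2(\eta_s(x)-\eta_s(y))^2\big(\phi(y/N)-\phi(x/N)\big)^2\,ds .
\]
Since $\bar\rho$ solves $\partial_t\bar\rho=\frac12\Delta\bar\rho$, and since the generator is linear in $\eta$ for SSEP, the drift becomes
\[
N^{-d/2}\sum_x(\eta_s(x)-\bar\rho(x,s))\Delta_N\phi+N^{-d/2}\sum_x\bar\rho(x,s)(\Delta_N-\Delta)\phi .
\]
This equals $\bar X^{1,N}(s,\Delta_N\phi)$ plus a discretization error of size $N^{d/2}N^{-2}$.

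We then split the increment as
\[
\bar X^{1,N}(t+h,\phi)-\bar X^{1,N}(t,\phi)=(M^N_{t+h}-M^N_t)+\int_t^{t+h}D_s\,ds,
\]
expand the square, and estimate the three terms.

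\textbf{Martingale term.} We have $\mathbb E(M_{t+h}-M_t)^2=\mathbb E\int_t^{t+h}(\text{carré du champ})\,ds$. A Taylor expansion of $\phi$ gives $(\phi(y/N)-\phi(x/N))^2N^2=(\partial_{e}\phi)^2+O(N^{-1})$. Taking expectations requires $\mathbb E(\eta_s(x)-\eta_s(y))^2=\mathbb E\eta_s(x)+\mathbb E\eta_s(y)-2\mathbb E\eta_s(x)\eta_s(y)$. The one-point function $\rho^N(x,s)=\mathbb E\eta_s(x)$ solves the discrete heat equation and is $\bar\rho+O(N^{-1})$ (or $O(N^{-2})$). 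The two-point function $\mathbb E\eta_s(x)\eta_s(x+e)$ is the delicate quantity. It equals $\rho^N(x)\rho^N(x+e)$ plus the correlation $\varphi_s(x,x+e)$, so the integrand becomes $2\bar\rho(1-\bar\rho)|\nabla\phi|^2$ plus correlation corrections. Here we would invoke the standard two-point correlation estimates for SSEP, namely the discrete heat equation for $\varphi$ with a source on the diagonal. These give $\sup_{x\neq y}|\varphi_s(x,y)|\lesssim N^{-d}$ (up to logs in $d=2$, with an extra factor in $d=1$), which is $\le N^{-1}$. After summing over $x$ against $N^{-d}$, this produces the $N^{-1}$ error. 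Replacing $\bar\rho(s)$ by $\bar\rho(t)$ on $[t,t+h]$ costs $h$ after division by $h$. Matching the result to the stated limit expression $\langle\nabla\phi,m\nabla\phi\rangle$ is bookkeeping with the factor $\frac12$ in the convention.

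\textbf{Drift term.} We bound $\mathbb E(\int_t^{t+h}D_s\,ds)^2\le h\int_t^{t+h}\mathbb E D_s^2\,ds$. Here $\mathbb E\,\bar X^{1,N}(s,\Delta_N\phi)^2\lesssim 1$ by the same correlation bounds, since the variance is $N^{-d}\sum\chi+N^{-d}\sum_{x\ne y}\varphi$. The discretization part contributes $(N^{d/2-2})^2=N^{d-4}$. After dividing by $h$ this gives $h+hN^{d-4}$, matching the statement.

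\textbf{Cross term.} By the martingale property, $\mathbb E[(M_{t+h}-M_t)\int_t^{t+h}D_s\,ds]=\mathbb E\int_t^{t+h}(M_s-M_t)D_s\,ds$. Cauchy--Schwarz bounds this by $\int_t^{t+h}\sqrt{(s-t)}\,ds=O(h^{3/2})$, hence $O(h^{1/2})$ after dividing by $h$. To improve this to $O(h)$, we would exploit the fact that the terms combine exactly. Alternatively we would write $D_s=D_t+\int_t^s(\cdot)$ and use $\mathbb E[(M_s-M_t)D_t]=0$, which leaves a term of order $\int_t^{t+h}(s-t)\,ds=h^2$. This requires controlling time increments of $D$ via Itô's formula again; the needed second moments are bounded because $\Delta_N^2\phi$ is smooth.

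\textbf{Main obstacle.} We expect the hardest step to be the uniform two-point correlation bound $|\varphi_s(x,y)|\lesssim N^{-1}$, which is needed for $x\ne y$ both at nearest neighbours and in aggregate. We would prove it by duality: $\varphi$ satisfies a discrete heat equation on $\{x\ne y\}$ with boundary source proportional to $-(\nabla_N\rho^N)^2N^{-2}\cdot N^2$. We would then bound the solution with the two-particle Green's function estimate, taking the initial correlation to be zero under a product initial measure.
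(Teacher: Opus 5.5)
Your route works, but it is not the paper's. The paper first splits $\bar X^{1,N}=\bar X^{1,N}_1+\bar X^{1,N}_2$ into the field centred at $\mathbb E\eta_t$ and a deterministic remainder built from $\rho_N-\bar\rho$. Their cross term vanishes trivially, and the deterministic part gives $hN^{d-4}$ from $\sup|\mathcal L_N\rho_N-\frac12\Delta\bar\rho|\lesssim N^{-2}$. For $\bar X^{1,N}_1$ the paper uses self-duality to get a Duhamel formula with the discrete semigroup $S_N$. The increment then splits into three mutually orthogonal pieces:
\begin{itemize}
\item the initial fluctuation against $(S_N(t+h)-S_N(t))\phi$;
\item the martingale integral over $[0,t]$ against $(S_N(t+h-s)-S_N(t-s))\phi$;
\item the martingale integral over $[t,t+h]$ against $S_N(t+h-s)\phi$.
\end{itemize}
The first two are $O(h^2)$ by smoothness of $\phi$ and boundedness of $\eta$. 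Only the last produces the mobility, via the carr\'e du champ, Lemma~\ref{error-rate} and a Riemann-sum estimate.

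Your Dynkin decomposition $\Delta M+\int_t^{t+h}D_s\,ds$ avoids duality and any comparison of $S_N(t+h-s)\phi$ with $\phi$, since your main term is directly $\int_t^{t+h}\mathbb E\,\Gamma_N$ at $\phi$. The price is the drift and the cross term. Your second fix for the cross term is the right one. Write $\mathbb E[(M_s-M_t)D_s]=\mathbb E[(M_s-M_t)(D_s-D_t)]$; the deterministic parts of $D$ drop out against the mean-zero $M_s-M_t$. The covariation of $M(\phi)$ with the martingale part of $\bar X^{1,N}(\cdot,\Delta_N\phi)$ is $O(s-t)$, because both test functions have $O(N^{-1})$ lattice increments. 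The remaining centred drift contributes $O((s-t)^{3/2})$. Integrating over $[t,t+h]$ gives $O(h^2)$, i.e. $O(h)$ after dividing by $h$, whereas plain Cauchy--Schwarz only gives $h^{1/2}$.

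One claim needs correcting. $\sup_{x\neq y}|\varphi_s(x,y)|\lesssim N^{-d}$ is false for $d\ge3$. There, nearest-neighbour correlations are generically of order $N^{-2}$, consistent with Lemma~\ref{error-rate}; they are of order $N^{-d}$ only at macroscopic separation. This is harmless for the martingale term, where anything $\lesssim N^{-1}$ suffices.

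It is not harmless for the variance bounds you need, both for $\bar X^{1,N}(s,\Delta_N\phi)$ in the drift term and for the centred drift in the cross term. The true supremum bound would only give $N^{-d}\sum_{x\neq y}|\varphi_s|\lesssim N^{d-2}$. What you need is the $\ell^1$ bound $\sum_{x\neq y}|\varphi_s(x,y)|\lesssim N^d$. Two ways to get it:
\begin{itemize}
\item Sum your two-particle Green's function representation over $(x,y)$: the symmetric two-particle kernel is doubly stochastic, and the source has $O(1)$ strength on $O(N^d)$ bonds.
\item More quickly, use negative correlations of SSEP under a product initial law together with conservation of $\sum_x\eta_s(x)$. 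This gives $-\sum_{x\neq y}\varphi_s(x,y)=\sum_x\bigl[\rho_N(s,x)(1-\rho_N(s,x))-\rho_0(x)(1-\rho_0(x))\bigr]\le N^d/4$.
\end{itemize}

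You are right to read the mobility as $\bar\rho(t)(1-\bar\rho(t))$, as in the body of the paper; the $1-\rho_0$ in the displayed statement is a typo.
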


We explain how these bounds arise. The first bound depends only on the time discretization parameter and is parallel to the error in Theorem~\ref{thm-1-intro}. The critical dimension $d=4$ appears through the second error term $h N^{d-4}$. This term originates from the competition between the renormalization factor $N^{d/2}$ in the fluctuation field $\bar{X}^{1,N}$ and the discretization error between the discrete Laplacian and the classical Laplacian. In particular, when $d>4$, this factor diverges as $N \to \infty$. Consequently, for applications, or from a practical point of view, a scaling relation between the time discretization parameter $h$ and the spatial discretization parameter $N$ is required in order to control the error in dimensions $d>4$. More precisely, one needs $h\,N(h)^{d-4} \to 0,\ \text{as } h \to 0$. The third bound arises from the Riemann sum error. 

In contrast with microscopic particle models, it is instructive to establish the identical results for the corresponding mesoscopic fluctuating hydrodynamics SPDEs. In the following, we consider 
\begin{equation}\label{SPDE-2-intro}
d\rho_{\varepsilon}=\frac{1}{2}\Delta\rho_{\varepsilon}dt-\varepsilon^{1/2}\nabla\cdot(\sigma(\rho_{\varepsilon})\circ dW_{\delta(\varepsilon)}), 	
\end{equation}
where $\sigma(\cdot)$ denotes a possibly irregular coefficient function, for example, $\sigma(\zeta)=\sqrt{\zeta}$ or $\sigma(\zeta)=\sqrt{\zeta(1-\zeta)}$. We begin by regularizing the irregular coefficient $\sigma(\cdot)$. Let $(\sigma_n(\cdot))_{n\geq1}$ be a sequence of smooth approximations of $\sigma(\cdot)$, and let $\rho_{\varepsilon}$ denote the solution of 
\begin{equation}\label{SPDE-3-intro}
d\rho_{\varepsilon}=\frac{1}{2}\Delta\rho_{\varepsilon}\,dt-\varepsilon^{1/2}\nabla\cdot\bigl(\sigma_{n(\varepsilon)}(\rho_{\varepsilon})\circ dW_{\delta(\varepsilon)}\bigr), 	
\end{equation}
Then we obtain the following error estimates.
\begin{theorem}\label{thm-3-intro}
Under appropriate assumptions on the initial datum $\rho_0$ and the coefficients $\sigma_n(\cdot)$, let $\phi \in C^{\infty}(\mathbb{T}^d)$ and define $\rho_{\varepsilon}^{\phi} := \langle \rho_{\varepsilon}, \phi \rangle$. The associated fluctuation field is then given by
\begin{align*}
\bar{\rho}_{\varepsilon}^{1,\phi} := \varepsilon^{-1/2} \big( \rho_{\varepsilon}^{\phi} - \bar{\rho}^{\phi} \big).
\end{align*}
Then, for every $t > h > 0$, the following estimate holds:
\begin{align*}
\left| \frac{1}{h} \mathbb{E} \left( \bar{\rho}_{\varepsilon}^{1,\phi}(t+h) - \bar{\rho}_{\varepsilon}^{1,\phi}(t) \right)^2 
- \left\langle \nabla \phi, \bar{\rho}(t)(1 - \bar{\rho}(t)) \nabla \phi \right\rangle \right|
\leq C(\phi, \rho_0, t) \cdot \mathrm{Error}(h, \varepsilon, \delta(\varepsilon), n(\varepsilon)),
\end{align*}
where the error term is given by
\begin{align*}
\mathrm{Error}(h, \varepsilon, \delta(\varepsilon), n(\varepsilon)) =\ & h 
+ \varepsilon \delta(\varepsilon)^{-2d} \|\sigma_{n(\varepsilon)}'\|_{L^{\infty}([0,1])}^4 h +\varepsilon \delta(\varepsilon)^{-d-2}. 
\end{align*}
\end{theorem}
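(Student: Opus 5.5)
We expand the Stratonovich SPDE \eqref{SPDE-3-intro} into Itô form and test it against $\phi$. Writing $W_{\delta}=\sum_k e_k^{\delta}\beta_k$ with spatial correlation $F_\delta$, the Itô correction gives
\begin{align*}
d\rho_\varepsilon^\phi=\tfrac12\langle\rho_\varepsilon,\Delta\phi\rangle\,dt+\varepsilon\,\langle R_{\varepsilon}(\rho_\varepsilon),\phi\rangle\,dt+\varepsilon^{1/2}\sum_k\langle\sigma_{n}(\rho_\varepsilon)e_k^\delta,\nabla\phi\rangle\,d\beta_k ,
\end{align*}
where $R_\varepsilon$ collects the Stratonovich-to-Itô correction. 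It involves $\sigma_n'$, $\sigma_n\sigma_n'$ and derivatives of the noise correlation; its size is controlled by $\|\sigma_n'\|_{L^\infty}$ and by $\delta^{-d-2}$-type bounds on $\sum_k|e_k^\delta|^2,\sum_k|\nabla e_k^\delta|^2$. Subtracting the equation $\partial_t\bar\rho^\phi=\frac12\langle\bar\rho,\Delta\phi\rangle$ and dividing by $\varepsilon^{1/2}$, the increment $\bar\rho^{1,\phi}_\varepsilon(t+h)-\bar\rho^{1,\phi}_\varepsilon(t)$ splits into three parts:
\begin{itemize}
\item a linear drift $D=\frac12\int_t^{t+h}\langle\bar\rho^1_\varepsilon,\Delta\phi\rangle\,ds$;
\item a correction drift $E=\varepsilon^{1/2}\int_t^{t+h}\langle R_\varepsilon,\phi\rangle\,ds$;
\item a martingale increment $M=\int_t^{t+h}\sum_k\langle\sigma_n(\rho_\varepsilon)e_k^\delta,\nabla\phi\rangle\,d\beta_k$.
\end{itemize}
We then expand $\frac1h\mathbb{E}(D+E+M)^2$.

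\textbf{Main term.} Itô's isometry gives
\begin{align*}
\frac1h\mathbb{E}M^2=\frac1h\int_t^{t+h}\mathbb{E}\sum_k\langle\sigma_n(\rho_\varepsilon)e_k^\delta,\nabla\phi\rangle^2\,ds .
\end{align*}
We compare this with $\langle\nabla\phi,\bar\rho(t)(1-\bar\rho(t))\nabla\phi\rangle$ in three steps, assuming $\sigma_n^2$ approximates $\sigma^2(\zeta)=\zeta(1-\zeta)$ (Lipschitz on $[0,1]$).
\begin{enumerate}
\item Replace $\sum_k\langle\sigma_n(\rho)e_k^\delta,\nabla\phi\rangle^2$ by $\langle\sigma_n^2(\rho),|\nabla\phi|^2\rangle$. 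For the standard mollified-noise choice, the correlation $\sum_ke_k^\delta(x)e_k^\delta(y)$ approximates $\delta_{x-y}$ up to normalisation.
\item Replace $\sigma_n^2(\rho_\varepsilon)$ by $\sigma^2(\bar\rho)$.
\item Replace $s$ by $t$ using the time regularity of $\bar\rho$, which costs $O(h)$.
\end{enumerate}
Step 2 requires $\mathbb{E}\|\rho_\varepsilon(s)-\bar\rho(s)\|$. Rather than going through the fluctuation field, we estimate it directly by an energy estimate on $\rho_\varepsilon-\bar\rho$ in $L^2$. This gives $\mathbb{E}\|\rho_\varepsilon-\bar\rho\|^2_{L^2}\lesssim\varepsilon\,\delta^{-d-2}$ from the noise plus correction terms. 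This is the source of the $\varepsilon\delta^{-d-2}$ term in the error.

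\textbf{Cross terms.} Since $E$ and $D$ are time integrals, $\frac1h\mathbb{E}D^2\le h\sup_s\mathbb{E}|\bar\rho^{1,\phi}_\varepsilon(s)|^2$, and likewise for $E$. Cauchy–Schwarz handles $\frac1h|\mathbb{E}(D+E)M|\lesssim\big(\frac1h\mathbb{E}(D+E)^2\big)^{1/2}\big(\frac1h\mathbb{E}M^2\big)^{1/2}$. The uniform-in-$\varepsilon$ bound $\sup_s\mathbb{E}\|\varepsilon^{-1/2}(\rho_\varepsilon-\bar\rho)\|_{H^{-1}}^2\le C$ follows from the linear heat semigroup and Itô's isometry. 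Then $\frac1h\mathbb{E}D^2\lesssim h$.

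For $E$ we need $\mathbb{E}|\langle R_\varepsilon,\phi\rangle|^2$. The correction involves squares of the noise covariance and $(\sigma_n')^2$, yielding roughly $\delta^{-2d}\|\sigma_n'\|_\infty^4$. Thus $\frac1h\mathbb{E}E^2\lesssim\varepsilon h\delta^{-2d}\|\sigma_n'\|^4_\infty$, which is the middle error term. The mixed terms are absorbed by Young's inequality, or kept as square roots if necessary.

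\textbf{Main obstacle.} The hardest step is bounding the Stratonovich correction $R_\varepsilon$ sharply enough to produce exactly the exponents $\delta^{-2d}$ and $\|\sigma_n'\|^4$. This requires explicit computation of $\sum_k e_k^\delta\nabla e_k^\delta$ and $\sum_k|\nabla e_k^\delta|^2$ for the chosen noise, together with a maximum principle keeping $\rho_\varepsilon\in[0,1]$ so that $L^\infty([0,1])$ norms of $\sigma_n'$ suffice. I would first prove $\rho_\varepsilon\in[0,1]$, assuming $\sigma_n$ vanishes at $0$ and $1$. I would then integrate by parts so that no derivatives fall on $\rho_\varepsilon$ in $\langle R_\varepsilon,\phi\rangle$, moving them onto $\phi$.
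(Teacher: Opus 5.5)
You work with the weak (Dynkin) form of $\bar\rho^{1,\phi}_\varepsilon$, whereas the paper uses the mild form. As written, your argument does not reach the rate $h$, and the gap is in the cross terms. You bound $\frac1h|\mathbb{E}[(D+E)M]|$ by Cauchy--Schwarz, but $\frac1h\mathbb{E}M^2$ is the order-one main term. This therefore only gives $\frac1h|\mathbb{E}[DM]|\lesssim h^{1/2}$ and $\frac1h|\mathbb{E}[EM]|\lesssim\big(\varepsilon\delta(\varepsilon)^{-2d}\|\sigma_{n(\varepsilon)}'\|^4_{L^\infty([0,1])}\,h\big)^{1/2}$. An $h^{1/2}$ error is not of the form $\mathrm{Error}(h,\varepsilon,\delta(\varepsilon),n(\varepsilon))$. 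Young's inequality cannot rescue this: you need a two-sided comparison with an $O(1)$ quantity, so there is nothing to absorb into, and optimizing $\theta\,\frac1h\mathbb{E}M^2+\theta^{-1}\frac1h\mathbb{E}(D+E)^2$ over $\theta$ again returns $h^{1/2}$. Keeping the square roots means proving a strictly weaker estimate.

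The missing idea is conditional orthogonality. Since $M$ is a martingale increment on $[t,t+h]$, $\mathbb{E}[YM]=0$ for every square-integrable $\mathcal{F}_t$-measurable $Y$. Freezing the drift integrand at time $t$ gives
\begin{align*}
\mathbb{E}[DM]=\frac12\int_t^{t+h}\mathbb{E}\Big(\big\langle\bar\rho^1_\varepsilon(s)-\bar\rho^1_\varepsilon(t),\Delta\phi\big\rangle\,M\Big)\,ds .
\end{align*}
The increment inside is again of the form $D+E+M$, now with test function $\Delta\phi$. It therefore has second moment $\lesssim(s-t)\big(1+\varepsilon\delta(\varepsilon)^{-2d}\|\sigma_{n(\varepsilon)}'\|^4_{L^\infty}\big)$, whence $\frac1h|\mathbb{E}[DM]|\lesssim h\big(1+\varepsilon\delta(\varepsilon)^{-2d}\|\sigma_{n(\varepsilon)}'\|^4_{L^\infty}\big)$.

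For $E$, write $\langle R_\varepsilon,\phi\rangle=\frac12F_{1,\delta(\varepsilon)}\langle\Sigma_{n}(\rho_\varepsilon),\Delta\phi\rangle$ with $\Sigma_n(\zeta)=\int_0^\zeta\sigma_n'(\zeta')^2\,d\zeta'$. This single integration by parts is all your ``main obstacle'' amounts to, since $F_{1,\delta}$ is constant and $F_{2,\delta}=0$; it is exactly the paper's $I_3$. Then freeze $\Sigma_n(\rho_\varepsilon(s))$ at $s=t$, control the remainder through $\mathbb{E}\|\rho_\varepsilon(s)-\rho_\varepsilon(t)\|^2_{L^1}$ using the energy estimate you already invoke, and split by AM--GM.

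The paper's mild formulation is precisely what avoids your $D$ term. There the increment becomes an $\mathcal{F}_t$-measurable stochastic integral over $[0,t]$ involving $S(h)-I$, which is exactly orthogonal to the new noise and is $O(h)$ in $L^2$ (its $I_1$). The rest is a stochastic integral over $[t,t+h]$ containing $S(t+h-s)$, whose removal costs $O(h)$ (its $I_2$). However, the paper's assertion that \emph{all} cross terms vanish is too quick for the correction drift on $[t,t+h]$ against the new martingale; the freezing argument is needed there as well.

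Smaller points:
\begin{enumerate}
\item Your uniform bound $\sup_s\mathbb{E}\|\bar\rho^1_\varepsilon(s)\|^2_{H^{-1}}\le C$ is false for $d\ge2$: the field is white-noise-like, so this norm grows like $\log\delta^{-1}$ or $\delta^{2-d}$. It also ignores the correction drift. You only need $\sup_s\mathbb{E}\langle\bar\rho^1_\varepsilon(s),\Delta\phi\rangle^2\lesssim1+\varepsilon\delta(\varepsilon)^{-2d}\|\sigma_{n(\varepsilon)}'\|^4_{L^\infty}$, which does hold.
\item Step 2 needs a first moment, so the energy estimate yields $(\varepsilon\delta(\varepsilon)^{-d-2})^{1/2}$ rather than $\varepsilon\delta(\varepsilon)^{-d-2}$. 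The paper drops the same square root in $I_{2,2,1}$.
\item Step 1 is not free. The quadratic variation is $\|m_\delta*(\sigma_n(\rho_\varepsilon)\nabla\phi)\|^2_{L^2}$, and removing $m_\delta$ costs a term of order $\delta(\varepsilon)^2$ that is absent from $\mathrm{Error}$. The paper's Section 5 omits the mollifier; compare $I_{4,2}$ in Section 6.
\item On the positive side, comparing $\sigma_n^2(\rho_\varepsilon(s))$ directly with $\sigma^2(\bar\rho(s))$ via the uniform Lipschitz bound on $\sigma_n^2$ is cleaner than the paper's $I_{2,1}$, which goes through the Lipschitz constant $\|\sigma_n'\|_{L^\infty}$ of $\sigma_n$ itself.
\end{enumerate}
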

Finally, we aim to investigate the renormalized kinetic solution to \eqref{SPDE-2-intro} as an analytically irregular object. Although a corresponding error estimate cannot be obtained, we are able to establish its asymptotic behavior. Let $\rho_{\varepsilon}$ denote the renormalized kinetic solution of \eqref{SPDE-2-intro} with coefficient $\sigma(\zeta)=\sqrt{\zeta}$ or $\sigma(\zeta)=\sqrt{\zeta(1-\zeta)}$. Then we obtain the following result.
\begin{theorem}\label{thm-4-intro}
Let $\rho_{\varepsilon}$ be the renormalized kinetic solution of \eqref{SPDE-2-intro}. Then, under the scaling regime $$\lim_{\varepsilon\rightarrow0}\varepsilon\delta(\varepsilon)^{-d-2}=0,$$ for every nonnegative test function $\phi \in C^{\infty}(\mathbb{T}^d)$ and every $t \in [0,T]$, the following asymptotic fluctuation identity holds:
\begin{align*}
	\lim_{\varepsilon \to 0} \liminf_{h \to 0} \frac{1}{h} \, \mathbb{E} \left( \left\langle \varepsilon^{-1/2}(\rho_{\varepsilon} - \bar{\rho})(t + h) - \varepsilon^{-1/2}(\rho_{\varepsilon} - \bar{\rho})(t), \phi \right\rangle \right)^2 = \langle \sigma(\bar{\rho}(t))^2 \nabla \phi, \nabla \phi \rangle.
\end{align*}
\end{theorem}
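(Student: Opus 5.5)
We prove Theorem \ref{thm-4-intro} by approximation: use the regularized equations \eqref{SPDE-3-intro}, for which Theorem \ref{thm-3-intro} already gives a quantitative bound, and combine this with stability of renormalized kinetic solutions and the Itô isometry.

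\textbf{Step 1: Itô decomposition.} Rewrite the Stratonovich noise in \eqref{SPDE-2-intro} in Itô form. For the kinetic solution tested against $\phi$, we obtain
\begin{align*}
\langle \rho_\varepsilon(t+h)-\rho_\varepsilon(t),\phi\rangle = \int_t^{t+h}\Big(\tfrac12\langle\rho_\varepsilon,\Delta\phi\rangle + \varepsilon\, \mathcal{C}_{\delta}(\rho_\varepsilon,\phi)\Big)\,ds + \varepsilon^{1/2}M^{\phi}_{t,t+h}.
\end{align*}
Here $\mathcal{C}_\delta$ is the Itô–Stratonovich correction, of order $\delta^{-d-2}$ in its dependence on the noise covariance. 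The martingale increment is $M^\phi_{t,t+h}=\int_t^{t+h}\langle \sigma(\rho_\varepsilon)\nabla\phi, dW_\delta\rangle$. The same decomposition holds for $\bar\rho$, without noise and without correction.

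\textbf{Step 2: the drift terms vanish as $h\to0$.} Subtract the equation for $\bar\rho$ and rescale by $\varepsilon^{-1/2}$. The drift terms are bounded in $L^2(\Omega)$ by $Ch\,(\varepsilon^{-1/2}\|\rho_\varepsilon-\bar\rho\|_{L^1} + \varepsilon^{1/2}\delta^{-d-2})$. This uses the $L^1$ contraction and the uniform bounds of kinetic solutions (a priori $0\le\rho_\varepsilon$, and $\le1$ in the exclusion case). After dividing by $h$, their square contributes $O(h)$, and the cross term with the martingale is $O(h^{1/2})$ by Cauchy–Schwarz. Both vanish under $\liminf_{h\to0}$ for fixed $\varepsilon$.

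\textbf{Step 3: the martingale term.} By the Itô isometry,
\[
\tfrac1h\mathbb{E}\big(M^\phi_{t,t+h}\big)^2=\tfrac1h\int_t^{t+h}\mathbb{E}\,\big\langle \sigma(\rho_\varepsilon)^2 \nabla\phi*\kappa_\delta,\nabla\phi*\kappa_\delta\big\rangle_{\text{corr}}\,ds,
\]
where $\kappa_\delta$ is the spatial correlation kernel of the noise and the bracket is the corresponding covariance pairing. Weak continuity in time of kinetic solutions gives $s\mapsto\rho_\varepsilon(s)$ continuous in $L^1$ a.s. Since $\sigma^2(\zeta)=\zeta$ or $\zeta(1-\zeta)$ is Lipschitz on bounded sets, the integrand is continuous in $s$, so the limit $h\to0$ equals $\mathbb{E}\langle\sigma(\rho_\varepsilon(t))^2\nabla\phi_\delta,\nabla\phi_\delta\rangle$ with $\phi_\delta=\phi*\kappa_\delta$. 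Because the square-root appears only squared, no regularity of $\sigma$ itself is needed. This is why the kinetic framework suffices here even though Theorem \ref{thm-3-intro}'s quantitative bound (which involves $\|\sigma_n'\|_\infty$) fails.

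\textbf{Step 4: $\varepsilon\to0$.} Use $\nabla\phi_\delta\to\nabla\phi$ uniformly. Show $\mathbb{E}\|\rho_\varepsilon(t)-\bar\rho(t)\|_{L^1}\to0$ under $\varepsilon\delta^{-d-2}\to0$. This is a law of large numbers for the kinetic solution, obtained via the doubling-of-variables $L^1$ stability estimate comparing $\rho_\varepsilon$ with $\bar\rho$. The noise and correction contributions there are controlled by $\varepsilon\delta^{-d-2}$, and the square-root irregularity is handled by $|\sqrt a-\sqrt b|^2\le|a-b|$ together with the renormalization/defect measure estimates.

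\textbf{Main obstacle.} Step 4 is the main obstacle, in particular making the doubling-of-variables estimate uniform despite the non-Lipschitz $\sigma$ near $0$ (and near $1$). We expect to use the kinetic entropy defect measures to absorb the singular terms near the boundary values.
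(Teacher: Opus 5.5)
There is a genuine gap in Steps 1--2. They assume that the renormalized kinetic solution satisfies the equation tested against $\phi(x)$ alone, with an It\^o--Stratonovich correction bounded in $L^2(\Omega)$ uniformly in time. This is not known, and it is exactly the difficulty the theorem addresses. For $\sigma(\zeta)=\sqrt{\zeta}$ the correction tested against $\phi$ would read
\begin{align*}
-\frac{\varepsilon}{2}F_{1,\delta(\varepsilon)}\int_{\mathbb{T}^d}\sigma'(\rho_{\varepsilon})^2\nabla\rho_{\varepsilon}\cdot\nabla\phi\,dx=-\frac{\varepsilon}{4}F_{1,\delta(\varepsilon)}\int_{\mathbb{T}^d}\rho_{\varepsilon}^{-1/2}\,\nabla\sqrt{\rho_{\varepsilon}}\cdot\nabla\phi\,dx .
\end{align*}
The only a priori information, $\nabla\sqrt{\rho_\varepsilon}\in L^2$ from \eqref{L2-es}, does not make $\rho_\varepsilon^{-1/2}\nabla\sqrt{\rho_\varepsilon}$ integrable near $\{\rho_\varepsilon=0\}$. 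The same happens near $\{\rho_\varepsilon=1\}$ for $\sqrt{\zeta(1-\zeta)}$. This is why Definition \ref{def-kineticsolution} only allows test functions $\psi\in C^\infty_c(\mathbb{T}^d\times(0,\infty))$, and why the paper stresses that the mild formulation breaks down. Removing the cutoff in $\zeta$ would also require controlling the kinetic measure $p_\varepsilon$ near $\zeta=0$, which your argument never addresses. The bounds $0\le\rho_\varepsilon\le1$ and the $L^1$ estimates you use in Step 2 give no control on this term, so your bound $Ch\,\varepsilon^{1/2}\delta^{-d-2}$ is unjustified.

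By contrast, Step 4 is not where the difficulty lies. The paper imports the law of large numbers (Lemma \ref{LLN-irregular}, convergence in probability). This upgrades to $\mathbb{E}\|\rho_\varepsilon(t)-\bar\rho(t)\|_{L^1}\to0$ because mass conservation gives $\|\rho_\varepsilon-\bar\rho\|_{L^1}\le2\|\rho_0\|_{L^1}$.

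The missing idea is a cutoff in the kinetic variable whose level is tied to $h$. The paper writes $\langle\rho_\varepsilon,\phi\rangle=\langle\chi_\varepsilon,\eta_\beta\phi\rangle+\langle\chi_\varepsilon,(1-\eta_\beta)\phi\rangle$, with $\eta_\beta=0$ below $\beta/2$ and $\eta_\beta=1$ above $\beta$.
\begin{itemize}
\item The low-density piece is bounded by $C(\phi)\beta$ at every time, so it contributes at most $C\varepsilon^{-1}\beta^2h^{-1}$.
\item The other piece is expanded with the admissible test function $\phi\,\eta_\beta$. On its support $\sigma'(\rho_\varepsilon)^2\le(2\beta)^{-1}$, so the correction is finite.
\item The price is two band terms living on $\{\beta/2\le\rho_\varepsilon\le\beta\}$: $\varepsilon^{-1/2}\int\phi\,\eta_\beta'\,dp_\varepsilon$, and the $F_{3,\delta(\varepsilon)}$-term with $\eta_\beta'$.
\end{itemize}
Choosing $\beta=h^{1-\varepsilon'}$ gives $\beta^2/h\to0$. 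The band terms vanish as $h\to0$ at fixed $\varepsilon$ by Lemma \ref{boundedness-kineticmeasure} and \cite[Lemma 7]{FG23}; this is why the order $\lim_{\varepsilon}\liminf_h$ matters. The martingale part is then treated by It\^o's isometry as in your Step 3, with $\phi\,\eta_\beta(\rho_\varepsilon)$ in place of $\phi$.

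Two smaller points. First, in Step 3 the quadratic variation is $\|m_{\delta(\varepsilon)}*(\sigma(\rho_\varepsilon)\nabla\phi)\|_{L^2}^2$. So $\sigma$ enters as $\sigma(\rho_\varepsilon(x))\sigma(\rho_\varepsilon(y))$, not only squared. This is repairable via $\|\sqrt a-\sqrt b\|_{L^2}^2\le\|a-b\|_{L^1}$. Second, the approximation route through Theorem \ref{thm-3-intro} announced in your first sentence cannot work as stated. Its error term $\varepsilon\delta(\varepsilon)^{-2d}\|\sigma_n'\|_{L^\infty}^4h$ blows up as $n\to\infty$ at fixed $h$, and nothing lets you exchange $n\to\infty$ with $h\to0$.
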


\subsection{Key ideas and technical comment}
This paper assembles three ideas with different techniques. 

\textbf{Error estimates. }In the case of the SSEP, we begin by exploiting its duality properties in order to derive a Duhamel-type representation for the associated empirical measure. The analysis of error terms relies on precise estimates for the quadratic variation of the fluctuation field as well as for the mobility matrix. These estimates are obtained by combining a characterization of the martingale quadratic variation through the carr\'e-du-champ operator with uniform moment bounds for the SSEP (see Lemma~\ref{error-rate} for details). The resulting bounds quantify the discretization errors both in time and in space. 

While quantitative fluctuation results for interacting particle systems have recently attracted increasing attention, the existing literature, for instance, Gess and Konarovskyi \cite{GK24}, does not directly address the type of estimates pursued in the present work. In particular, although Gess and Konarovskyi provide quantitative estimates for the full fluctuation field, our focus is on specific observables of the field, for which we aim to establish quantitative control of their short-time quadratic variation that explicitly depends on the time-discretization parameter. Their results do not directly reveal the dependence of constants on this time scale. 

Although, in principle, one might be able to adapt their generator-comparison approach to obtain such estimates, we instead adopt a more direct method by characterizing the martingale quadratic variation via the carr\'e-du-champ operator. This approach not only yields precise, time-local error bounds for the observables of interest but also allows natural extensions to analogous estimates for fluctuating hydrodynamics SPDEs.

Moreover, similar estimates for systems of independent Brownian particles and for the regularized fluctuating hydrodynamics equations can be established by applying the same characterization to the martingale terms arising in the corresponding fluctuation fields.

\textbf{Asymptotic behavior for irregular SPDEs. }  
In the setting of fluctuating hydrodynamics equations with irregular coefficients of square-root type, the mild formulation is no longer well-defined due to the lack of integrability of the highly irregular Stratonovich-to-It\^o correction terms. Instead, \cite{FG24} develop a framework for the well-posedness of such equations based on the notion of renormalized kinetic solutions. In this context, we decompose the fluctuation field into two contributions: 
\begin{align*}
&\frac{1}{h} \, \mathbb{E} \left( \left\langle \varepsilon^{-1/2}(\rho_{\varepsilon} - \bar{\rho})(t + h) - \varepsilon^{-1/2}(\rho_{\varepsilon} - \bar{\rho})(t), \phi \right\rangle \right)^2\\ 
\lesssim& \frac{1}{h} \, \mathbb{E} \left( \left\langle \varepsilon^{-1/2} \rho_{\varepsilon}(t + h) - \varepsilon^{-1/2} \rho_{\varepsilon}(t), \phi \right\rangle \right)^2 + \frac{1}{h} \, \mathbb{E} \left( \left\langle \varepsilon^{-1/2} \bar{\rho}(t + h) - \varepsilon^{-1/2} \bar{\rho}(t), \phi \right\rangle \right)^2, 
\end{align*}
where the second term can be estimated directly by exploiting the time-regularity of $\bar{\rho}$.  

Next, fix $\beta \in (0, 1/2)$ (to be specified later), and let $\eta_{\beta}$ denote a smooth truncation function such that $\eta_{\beta}(\zeta) =1$ for $\zeta \geq \beta$, $\eta_{\beta}(\zeta) =0$ for $\zeta \leq \beta/2$, and satisfying the derivative bound $\eta'_{\beta} \lesssim \beta^{-1} I_{\left[\frac{\beta}{2}, \beta\right]}$. Let $\chi_{\varepsilon} = I_{\{0 < \zeta < \rho_{\varepsilon}\}}$ denote the renormalized kinetic function. With this notation, we further decompose the first term as 
\begin{align*}
&\varepsilon^{-1/2} \left\langle \rho_{\varepsilon}(t + h) - \rho_{\varepsilon}(t), \phi \right\rangle\\ 
=& \varepsilon^{-1/2} \left\langle \chi_{\varepsilon}(t + h) - \chi_{\varepsilon}(t), \eta_{\beta} \phi \right\rangle_{L^2(\mathbb{T}^d\times\mathbb{R}_+)}
+ \varepsilon^{-1/2} \left\langle \chi_{\varepsilon}(t + h) - \chi_{\varepsilon}(t), (1 - \eta_{\beta}) \phi \right\rangle_{L^2(\mathbb{T}^d\times\mathbb{R}_+)}.
\end{align*}
Under this decomposition, one may apply the kinetic formulation to expand both terms. In the subsequent estimates, an explicit choice of $\beta$ depending on the time discretization parameter $h$ is required. By carefully analyzing the contributions of the kinetic measure together with the remainder terms in the kinetic formulation, one ultimately obtains the desired asymptotic behavior as $h \to 0$ and $\varepsilon \to 0$.

\subsection{Comment on the literature} \ 

{\bf The SSEP: fluctuations and quantitative estimates. }
The study of fluctuations in the SSEP has a long and rich history. The central limit theorem for the empirical density field was first established in \cite{R92}, where convergence to an infinite-dimensional Ornstein-Uhlenbeck process was proved. Kipnis and Varadhan \cite{KV86} established a central limit theorem for additive functionals of reversible Markov processes. Building on this framework, Yau and collaborators~\cite{Y91, CY92} developed the relative entropy method and the Boltzmann-Gibbs principle to derive hydrodynamic and fluctuation limits for conservative particle systems, including non-gradient models. Subsequent works by Jara and Gonçalves~\cite{J09, GJ14} extended these results to non-equilibrium and weakly asymmetric regimes, establishing connections between exclusion processes and nonlinear stochastic PDEs such as the KPZ equation. More recently, quantitative central limit theorems have been obtained for the SSEP~\cite{GK24}, providing optimal convergence rates toward the Gaussian fluctuation field. In addition, \cite{AZ24} studied fluctuations of the Symmetric Simple Inclusion Process.

{\bf Fluctuating hydrodynamics and conservative SPDEs. }
Macroscopic Fluctuation Theory (MFT) provides a unifying framework for studying non-equilibrium systems, extending the classical linear response theory valid near equilibrium (see Bertini et al. \cite{BDGJL}, Derrida \cite{Derrida}). At its heart lies an ansatz for the large deviation principles describing interacting particle systems. A natural companion to MFT is the theory of Fluctuating Hydrodynamics (FHD), which incorporates microscopic fluctuations in accordance with statistical mechanics and non-equilibrium thermodynamics. In the FHD setting, one postulates conservative stochastic PDEs to encode the essential features of non-equilibrium fluctuations (see \cite{LL87}, \cite{HS}). From this perspective, the MFT ansatz emerges as the zero-noise large deviation principle associated with such equations. A prototypical example is the Dean-Kawasaki equation (see \cite{D96,K98}), which captures the fluctuation behavior of mean-field interacting particle systems.

The mathematical analysis of the Dean-Kawasaki equation and related models has progressed substantially over the past decade. Konarovskyi, Lehmann and von Renesse \cite{KLR19}  established a rigorous martingale framework, thereby providing a mathematically justified formulation. Subsequently, Konarovskyi, Lehmann and von Renesse \cite{KLR20} extended the well-posedness result to the case of nonlocal interactions by employing a Girsanov transform.
Further studies such as Konarovskyi and M\"uller \cite{KF24}, Dello Schiavo and Konarovskyi \cite{DK25} and Dello Schiavo \cite{D25} investigated additional properties of the Dean--Kawasaki equation driven by space-time white noise. More recently, M\"uller, von Renesse and Zimmer \cite{FMJ25} extended the martingale approach to the Vlasov-Fokker-Planck-type Dean--Kawasaki equation. A key methodological advance came from Debussche and Vovelle \cite{DV10}, who introduced the notion of kinetic solutions for stochastic conservation laws. This framework was subsequently generalized to parabolic-hyperbolic SPDEs with conservative noise by Gess and Souganidis \cite{GS17}, Fehrman and Gess \cite{FG19}, and Dareiotis and Gess \cite{DG20}, and later adapted to Dean-Kawasaki type systems. For locally interacting models, Fehrman and Gess \cite{FG24} proved the well-posedness of functional-valued solutions with correlated noise, and in follow-up work \cite{FG23} established small-noise large deviation principles. Building on these results, Clini and Fehrman \cite{CF23} obtained a central limit theorem for the nonlinear Dean-Kawasaki equation with correlated noise, while Gess, the author, and Zhang \cite{GWZ24} derived higher-order fluctuation expansions. Additionally, Hao, the second author and the third author \cite{HWJ25} proved a strong well-posedness result for the Vlasov-Fokker-Planck-Dean-Kawasaki equation with correlated noise. 

For nonlocally interacting systems, Wang, the author, and Zhang \cite{WWZ22}, as well as the author and Zhang \cite{WZ24}, established well-posedness and large deviation principles for Dean-Kawasaki equations with singular interactions, including applications to the fluctuating Ising-Kac-Kawasaki equation. Recently, the second author \cite{W25} established the nonlinear fluctuations near criticality of the fluctuating Ising-Kac-Kawasaki equation. Related developments include the work of Martini and Mayorcas \cite{AA25,AA24}, who proved well-posedness and large deviation principles for an additive-noise approximation of the Keller-Segel Dean-Kawasaki equation. 

Other results for the Dean-Kawasaki equation with correlated noise include the recent works of Fehrman and Gess \cite{fehrman2024conservative} who consider the whole space case, and the work of Fehrman \cite{fehrman2025stochastic} who considers the bounded domains with homogeneous Neumann boundary data case.
Finally, we mention that Djurdjevac, Kremp and Perkowski \cite{djurdjevac2022weak} and Djurdjevac, Ji and Perkowski \cite{djurdjevac2025weak} show quantitative error estimates between the approximating SPDE and the empirical measure of the corresponding particle system. Further results concerning weak error estimates for the Dean-Kawasaki equation can be found in Cornalba and Fisher \cite{CF23-arma}, Cornalba, Fisher, Ingmanns and Raithel \cite{CFIR26}, and Magaletti, Gallo, Perez, Carrillo and Kalliadasis \cite{MGPCK22}. Furthermore, regarding long time behaviors, Fehrman, Gvalani and Gess \cite{fehrman2022ergodicity} studied ergodicity of the Dean-Kawasaki equation with regularized coefficients. Recently, Popat extended the well-posedness and large deviations theory for the Dean-Kawasaki equation to the case of bounded domains with Dirichlet boundary conditions \cite{Shyam25,Shyam25-fluc}. Subsequently, Popat and the second author \cite{PW25} established ergodicity for the Dean--Kawasaki equation with Dirichlet boundary conditions and square-root coefficients.

\subsection{Structure of the paper} 
The paper is organized as follows. In Section \ref{sec-2}, we introduce basic notations, provide a rigorous formulation of the SSEP, and define solution concepts for the fluctuating hydrodynamics equations. Section \ref{sec-3} begins with error estimates for independent Brownian particles and establishes Theorem \ref{thm-1-intro}. In Section \ref{sec-4}, we derive error estimates for the SSEP, thereby proving Theorem \ref{thm-2-intro}. Section \ref{sec-5} presents error estimates for the regularized fluctuating hydrodynamics SPDEs, leading to the proof of Theorem \ref{thm-3-intro}. Finally, Section \ref{sec-6} addresses fluctuating hydrodynamics SPDEs with irregular coefficients and establishes the asymptotic behavior described in Theorem \ref{thm-4-intro}.

\section{Preliminary}\label{sec-2}
\subsection{Notations}
Throughout the paper, we denote by $\mathbb{T}^d$ the $d$-dimensional torus normalized to have volume one. For each $N \in \mathbb{N}_+$, let $\mathbb{T}^d_N\subset\mathbb{T}^d$ denote the discrete $d$-dimensional torus consisting of $N^d$ sites. The operator $\nabla$ stands for the spatial gradient, and $\nabla\cdot$ denotes the divergence with respect to the space variable $x \in \mathbb{T}^d$. For any integer $p \in [1,\infty]$, we denote by $\|\cdot\|_{L^p(\mathbb{T}^d)}$ the norm in the Lebesgue space $L^p(\mathbb{T}^d)$ (or in $L^p(\mathbb{T}^d; \mathbb{R}^d)$ when vector-valued functions are considered). The inner product in $L^2(\mathbb{T}^d)$ is denoted by $\langle\cdot,\cdot\rangle$. 

We write $C^\infty(\mathbb{T}^d \times (0,\infty))$ for the space of infinitely differentiable functions defined on $\mathbb{T}^d \times (0,\infty)$, and $C^\infty_c(\mathbb{T}^d \times (0,\infty))$ for its subspace consisting of functions with compact support. Given a non-negative integer $k$ and $p \in [1,\infty]$, the Sobolev space $W^{k,p}(\mathbb{T}^d)$ denotes the space of functions on $\mathbb{T}^d$ with weak derivatives up to order $k$ in $L^p(\mathbb{T}^d)$. We define $H^a(\mathbb{T}^d) := W^{a,2}(\mathbb{T}^d)$ for $a \in \mathbb{R}$, and let $H^{-a}(\mathbb{T}^d)$ denote the dual space of $H^a(\mathbb{T}^d)$. The space $C^k_{\mathrm{loc}}(\mathbb{R})$ consists of all $k$-times continuously differentiable functions equipped with the topology of local convergence. The bracket $\langle\cdot,\cdot\rangle$ also denotes the dual pairing between $C^\infty(\mathbb{T}^d)$ and the space of distributions over $\mathbb{T}^d$.

Let $X$ be a real Banach space equipped with norm $\|\cdot\|_X$. For every $p \in [1,\infty]$, we denote by $L^p([0,T];X)$ the standard Bochner-Lebesgue space of $X$-valued functions, and by $W^{1,p}([0,T];X)$ the corresponding Sobolev space of functions whose weak derivatives in time also lie in $L^p([0,T];X)$.

In what follows, the notation $a \lesssim b$ for $a, b \in \mathbb{R}$ indicates that there exists a constant $\mathcal{D} > 0$, independent of any relevant parameters, such that $a \leq \mathcal{D}b$. The symbol $C$ is used generically to denote a positive constant which may vary from line to line and can be computed explicitly in terms of known quantities.

Furthermore, we fix $(\Omega,\mathcal{F},\mathbb{P})$ as a probability space.

\subsection{Basic definition of interacting particle systems} 
For each $N \in \mathbb{N}_+$, let $\Omega_N := \{0,1\}^{\mathbb{T}^d_N}$ denote the configuration space of the symmetric simple exclusion process (SSEP) on the discrete torus $\mathbb{T}^d_N$. The generator of the SSEP, acting on functions $f: \Omega_N \to \mathbb{R}$, is given by
\begin{align*}
(\mathcal{L}_N f)(\eta) = N^2 \sum_{x \in \mathbb{T}^d_N} \sum_{z \in \mathbb{T}^d_N} \eta(x)(1 - \eta(x+z)) p(z) \big[f(\eta^{x,x+z}) - f(\eta)\big],
\end{align*}
where $\eta^{x,x+z}$ denotes the configuration obtained from $\eta$ by exchanging the occupation variables at sites $x$ and $x+z$, and $p(z)$ is the symmetric jump probability kernel. If $\{\bar{e}_j\}_{j=1}^d$ denote the canonical basis vectors of $\mathbb{T}^d_N$, $p$ is given by 
\begin{align*}
p(z) = \frac{1}{2d}, \quad \text{if } z = \pm \bar{e}_j,\ j = 1, \ldots, d, \quad \text{and } p(z) = 0 \quad \text{otherwise}.
\end{align*}

For the reader's convenience, we now recall the notion of slowly varying initial data in the context of interacting particle systems.

\begin{definition}\label{def-initialdata}
Let $\rho_0: \mathbb{T}^d \to \mathbb{R}_+$ be a given function. We say that $\nu^N_{\rho_0(\cdot)}$ is a product measure with a slowly varying parameter associated with the macroscopic profile $\rho_0$ if the product measure $\nu^N_{\rho_0(\cdot)}$ on $\{0,1\}^{\mathbb{T}^d_N}$ satisfies
$$
\nu^N_{\rho_0(\cdot)} \bigl\{ \eta : \eta(x) = k \bigr\} = \nu^N_{\rho_0(x)} \bigl\{ \eta : \eta(0) = k \bigr\}, \quad \text{for all } x \in \mathbb{T}^d_N, \ k \in \{0,1\},
$$
where $\nu^N_{\rho_0(x)}$ denotes the Bernoulli distribution with parameter $\rho_0(x)$.

\end{definition}

In this framework, we fix a smooth function $\rho_0 \in C^{\infty}(\mathbb{T}^d;[0,1])$, and assume that the law of the initial configuration of the SSEP is given by the product Bernoulli measure $\nu^N_{\rho_0(\cdot)}$ with slowly varying parameter associated to $\rho_0$. 

We define the empirical measure associated with the configuration $\eta_t$ at time $t\in[0,T]$ as
\begin{align*}
\pi^N(\eta_t, dy) := N^{-d} \sum_{x \in \mathbb{T}^d_N} \eta_t(x)\, \delta_{x}(dy),
\end{align*}
where $\delta_{x}$ denotes the Dirac mass at $x \in \mathbb{T}^d_N$. For every test function $\phi: \mathbb{T}^d \to \mathbb{R}$ of class $C^\infty(\mathbb{T}^d)$, we define the action of the empirical measure on $\phi$ by
\begin{align*}
\pi^N(\eta_t, \phi) := N^{-d} \sum_{x \in \mathbb{T}^d_N} \eta_t(x)\, \phi(x).
\end{align*}

Let $\bar{\rho}$ denote the unique solution to the heat equation with initial condition $\rho_0$:
\begin{align}\label{heat-equation}
\partial_t \bar{\rho} = \frac{1}{2}\Delta \bar{\rho}, \quad \bar{\rho}(0) = \rho_0.
\end{align}

We now define the fluctuation field, which captures the microscopic deviations from the macroscopic hydrodynamic profile. For every $t \in [0,T]$, the fluctuation field is given in distributional form by
\begin{align*}
\bar{X}^{1,N}(t, dy) := N^{-d/2} \sum_{x \in \mathbb{T}^d_N} \big(\eta_t(x) - \bar{\rho}(x,t)\big)\, \delta_{x}(dy). 
\end{align*}
Correspondingly, for any test function $\phi \in C^\infty(\mathbb{T}^d)$, we define the action of the fluctuation field on $\phi$ by
\begin{align*}
\bar{X}^{1,N}(t, \phi) := N^{-d/2} \sum_{x \in \mathbb{T}^d_N} \big(\eta_t(x) - \bar{\rho}(x,t)\big)\, \phi(x).
\end{align*}

By applying Dynkin's formula to the empirical measure, we obtain that almost surely for every $t\in[0,T]$, 
\begin{align}\label{Dynkin-1}
\langle \pi^N_t, \phi \rangle = \langle \pi^N_0, \phi \rangle + \int_0^t \mathcal{L}_N \langle \pi^N_s, \phi \rangle\, ds + M^{N,\phi}_t,
\end{align}
where $M^{N,\phi}_t$ is a martingale with respect to the natural filtration $\{\mathcal{F}_t\}_{t \in [0,T]} := \{\sigma(\eta_s: s \leq t)\}_{t \in [0,T]}$.

From the definition of the generator, one can compute
\begin{align}\label{L-N}
\mathcal{L}_N \eta(x) =& \frac{N^2}{2} \sum_{j=1}^d \big[\eta(x+\bar{e}_j) + \eta(x-\bar{e}_j) - 2\eta(x)\big]. 
\end{align}

Let $\mathcal{P}_N$ denote the fundamental solution of the Chapman-Kolmogorov equation \cite{KL99}:  
\begin{align*}
\partial_t\mathcal{P}_N(t,\eta,\xi)=(\mathcal{L}_N)_{\eta}\mathcal{P}_N(t,\eta,\xi),\quad \mathcal{P}_N(0,\eta,\xi)=I_{\{\eta=\xi\}},\quad \xi,\eta\in\Omega_N. 	
\end{align*}
Define the associated semigroup $(P_N(t))_{t\in[0,T]}$ by
\begin{align}\label{PNt}
P_N(t)f(\eta):=\sum_{\xi\in\Omega_N}\mathcal{P}_N(t,\eta,\xi)f(\xi),\quad\forall t\in[0,T], 	
\end{align}
for all suitable functions $f:\Omega_N\rightarrow\mathbb{R}$.

Furthermore, for each $x\in\mathbb{T}^d_N$, consider the SSEP starting from the single-particle configuration $\delta_x$. If one tracks the position of the particle, denoted by the process $(X_N(t))_{t\in[0,T]}$, let $(G_N(t,x))_{t\in[0,T],\,x\in\mathbb{T}^d_N}$ denote the corresponding transition probabilities. In what follows, we present a lemma that characterizes the dual semigroup of $(P_N(t))_{t\in[0,T]}$.
\begin{lemma}\label{dual-semigroup}
Let $(\eta_t)_{t\in[0,T]}$ denote the SSEP with slowly varying initial data, as stated in Definition \ref{def-initialdata}, and let $(P_N(t))_{t\in[0,T]}$ be the semigroup defined in \eqref{PNt}. Then, for every $\varphi \in C^{\infty}(\mathbb{T}^d)$, we have that almost surely, for all $0 \leq s < t \leq T$,
\begin{align*}
\sum_{x\in\mathbb{T}^d_N} P_N(t-s)\eta_s(x) \, \delta_{x}(\varphi) = \sum_{x\in\mathbb{T}^d_N} \eta_s(x) \, \delta_{x}(S_N(t-s)\varphi),
\end{align*}
where $(S_N(t))_{t\in[0,T]}$ denotes the semigroup defined by
\begin{align}\label{semi-group-SN}
S_N(t)\varphi(y) = \sum_{x\in\mathbb{T}^d_N} G_N(t, y - x) \, \varphi(x), \quad y \in \mathbb{T}^d_N, \ t \in [0,T],
\end{align}
and $(G_N(t,x))_{t\in[0,T],\,x\in\mathbb{T}^d_N}$ is the transition probability of the independent random walk corresponding to the one-particle SSEP introduced above.
\end{lemma}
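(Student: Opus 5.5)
We interpret $P_N(t-s)\eta_s(x)$ as the conditional expectation $\mathbb{E}[\eta_t(x)\mid\mathcal{F}_s]$, i.e.\ $P_N(t-s)$ applied to the occupation function $f_x(\xi)=\xi(x)$ and evaluated at the configuration $\eta_s$. By the Markov property it suffices to prove a deterministic identity: for every fixed configuration $\eta\in\Omega_N$ and every $r\ge 0$,
\begin{align*}
P_N(r)f_x(\eta)=\sum_{z\in\mathbb{T}^d_N} G_N(r,x-z)\,\eta(z).
\end{align*}
This is the classical self-duality of the SSEP with one dual particle. Once it holds, we multiply by $\varphi(x)$, sum over $x$, and exchange the finite sums. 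Using the symmetry $G_N(r,x-z)=G_N(r,z-x)$ of the nearest-neighbour walk, this gives $\sum_z\eta(z)\sum_x G_N(r,z-x)\varphi(x)=\sum_z\eta(z)\,S_N(r)\varphi(z)$. Setting $\eta=\eta_s$ and $r=t-s$, this is exactly the asserted equality of $\sum_x(\cdot)\delta_x(\varphi)$ and $\sum_x\eta_s(x)\delta_x(S_N(t-s)\varphi)$. Since all sums are finite, the statement holds for all $s<t$ simultaneously, almost surely.

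\textbf{Step 1: the closed equation.} Define $u(r,x):=P_N(r)f_x(\eta)$. By the Kolmogorov backward equation, $\partial_r u(r,x)=P_N(r)\mathcal{L}_Nf_x(\eta)$. By \eqref{L-N}, $\mathcal{L}_Nf_x=\frac{N^2}{2}\sum_j\big(f_{x+\bar e_j}+f_{x-\bar e_j}-2f_x\big)$, which is again a linear combination of one-point functions. Hence
\begin{align*}
\partial_r u(r,x)=\frac{N^2}{2}\sum_{j=1}^d\big[u(r,x+\bar e_j)+u(r,x-\bar e_j)-2u(r,x)\big],\qquad u(0,x)=\eta(x).
\end{align*}
\textbf{Step 2: the same equation for the right-hand side.} The one-particle SSEP is the continuous-time random walk whose generator is the right-hand side of this display. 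Consequently $v(r,x):=\sum_z G_N(r,x-z)\eta(z)$ satisfies the same linear ODE system on the finite space $\mathbb{R}^{\mathbb{T}^d_N}$, with the same initial datum, because $G_N(0,\cdot)=\delta_0$.

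\textbf{Step 3: conclusion.} Uniqueness for linear ODEs in finite dimension gives $u=v$, which is the identity above, and the lemma then follows as described in the first paragraph.

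The main obstacle is in Step 1: verifying \eqref{L-N}, and in particular that the exclusion constraint does not spoil the linearity. Writing the generator with both jump directions, the exchange $\eta\mapsto\eta^{x,x+z}$ changes $\eta(x)$ by $\eta(x+z)-\eta(x)$ regardless of occupancy. The rate factors $\eta(x)(1-\eta(x+z))$ and $\eta(x+z)(1-\eta(x))$ then combine, and the quadratic terms cancel by the symmetry $p(z)=p(-z)$. This cancellation is precisely what makes the one-point correlations close, so it must be checked carefully, including the normalisation convention that yields the factor $\frac{N^2}{2}$ with the jump kernel $p(z)=\frac1{2d}$.
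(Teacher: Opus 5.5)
Your argument is correct, and it reaches the identity by a more direct route than the paper. The paper first proves full self-duality, $\mathcal{L}_N D(\xi,\cdot)=\mathcal{L}_{N,\kappa}D(\cdot,\eta)$ for $D(\xi,\eta)=\prod_{\xi(x)=1}\eta(x)$ and every particle number $\kappa$. It then invokes the semigroup-level relation \eqref{self-duality}, which it takes for granted, with $\xi=\delta_x$ to obtain $P_N(t-s)\eta_s(x)=\hat{\mathbb{E}}_x\,\eta_s(X_N(t-s))$, and finishes by a change of variables.

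You use only the one-dual-particle case, namely closure of the one-point functions under $\mathcal{L}_N$. You replace the passage from the generator identity to \eqref{self-duality} by uniqueness for a linear ODE on $\mathbb{R}^{\mathbb{T}^d_N}$. You also make explicit the symmetry $G_N(r,w)=G_N(r,-w)$, which the paper's change of variables uses tacitly. Your version is therefore fully self-contained. The paper's version buys duality with several dual particles, which is what underlies the two-point correlation estimates cited in Lemma~\ref{error-rate}, even though the present lemma needs only one.

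Two small points. First, the form $\partial_r P_N(r)f=P_N(r)\mathcal{L}_Nf$ that you use is fine, because $P_N(r)=e^{r\mathcal{L}_N}$ commutes with $\mathcal{L}_N$ on the finite state space; the paper defines $P_N$ through the backward equation. Second, the normalisation check you flag actually yields $\mathcal{L}_N\eta(x)=N^2\sum_z p(z)[\eta(x+z)-\eta(x)]=\frac{N^2}{2d}\sum_j[\eta(x+\bar e_j)+\eta(x-\bar e_j)-2\eta(x)]$, so the constant in \eqref{L-N} is correct only for $d=1$. This does not affect your proof, provided Step~2 identifies the one-particle generator as $g\mapsto N^2\sum_z p(z)[g(\cdot+z)-g(\cdot)]$, which is exactly the operator produced in Step~1, instead of quoting \eqref{L-N}.
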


\begin{remark}
We remark that the discrete heat semigroup $S_N(t)$, as defined in \eqref{semi-group-SN}, can be naturally extended to the domain $\mathbb{T}^d$. More precisely, for any suitable function $\varphi:\mathbb{T}^d \to \mathbb{R}$, by applying a change of variables, we may extend the definition as  
\begin{align*}
S_N(t)\varphi(y) = \sum_{x \in \mathbb{T}^d_N} G_N(t, x)\, \varphi(y-x), 
\quad y \in \mathbb{T}^d,\ t \in [0,T].
\end{align*}
Furthermore, a straightforward computation shows that the generator of $S_N(t)$ is given by $\frac{1}{2}\Delta_N$, where $\Delta_N$ denotes the discrete Laplacian. 
\end{remark}

Before presenting the proof of Lemma~\ref{dual-semigroup}, we first recall some fundamental properties concerning the duality structure of the SSEP.

In the following, we define the SSEP restricted to configurations containing exactly $\kappa$ particles, for some integer $\kappa \in (0, N^d]$. Let
\begin{align*}
	(\mathcal{L}_{N,\kappa} f)(\xi) = N^2 \sum_{x\in\mathbb{T}^d_N} \sum_{z\in\mathbb{T}^d_N} \xi(x)(1 - \xi(x+z))p(z)\big[f(\xi^{x,x+z}) - f(\xi)\big],
\end{align*}
for every $\xi \in \Omega_{N,\kappa} := \{ \xi \in \Omega_N : |\xi| = \kappa \}$, where $|\xi| := \sum_{x \in \mathbb{T}^d_N} \xi(x)$, and $f : \Omega_{N,\kappa} \rightarrow \mathbb{R}$ is a suitable function.

We say that the SSEP is self-dual if there exists a function $D : \Omega_{N,\kappa} \times \Omega_N \to \mathbb{R}$ such that
$$
\mathcal{L}_{N} D(\xi,\eta) = \mathcal{L}_{N,\kappa} D(\xi,\eta), 
$$
for every integer $\kappa \in (0, N^d]$ and every $\xi \in \Omega_{N,\kappa}$, $\eta \in \Omega_N$, where $\mathcal{L}_N$ acts on the configuration $\eta$ and $\mathcal{L}_{N,\kappa}$ acts on the configuration $\xi$.

Let $(\eta_t)_{t\geq 0}$ and $(\xi_t)_{t\geq 0}$ be the Markov processes generated by $\mathcal{L}_N$ and $\mathcal{L}_{N,\kappa}$, respectively. Then the self-duality relation implies that
\begin{align}\label{self-duality}
\mathbb{E}_{\eta} D(\xi, \eta_t) = \mathbb{E}^{\kappa}_{\xi} D(\xi_t, \eta),
\end{align}
for every integer $\kappa \in (0, N^d]$ and every $\xi \in \Omega_{N,\kappa}$, $\eta \in \Omega_N$. Here, $\mathbb{E}_{\eta}$ denotes the expectation with respect to the law of $\eta(t)$ started from $\eta$, and $\mathbb{E}^{\kappa}_{\xi}$ denotes the expectation with respect to the law of $\xi(t)$ started from $\xi$.

\begin{lemma}
The SSEP is self-dual.
\end{lemma}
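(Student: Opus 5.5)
We prove self-duality with the classical duality function
\begin{align*}
D(\xi,\eta):=\prod_{x\in\mathbb{T}^d_N:\ \xi(x)=1}\eta(x)=\prod_{x\in\mathbb{T}^d_N}\eta(x)^{\xi(x)},\qquad \xi\in\Omega_{N,\kappa},\ \eta\in\Omega_N .
\end{align*}
Here $D(\xi,\eta)=1$ exactly when every site occupied in $\xi$ is also occupied in $\eta$ (that is, $A_\xi:=\{x:\xi(x)=1\}\subseteq A_\eta$), and $D(\xi,\eta)=0$ otherwise. The claim reduces to the pointwise identity $\mathcal{L}_N D(\cdot\,,\cdot)(\xi,\eta)=\mathcal{L}_{N,\kappa}D(\cdot\,,\cdot)(\xi,\eta)$, with $\mathcal{L}_N$ acting on $\eta$ and $\mathcal{L}_{N,\kappa}$ acting on $\xi$. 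Relation \eqref{self-duality} then follows by the standard argument: $u(t):=\mathbb{E}_\eta D(\xi,\eta_t)$ and $v(t):=\mathbb{E}^\kappa_\xi D(\xi_t,\eta)$ both solve the same finite-dimensional linear ODE. Concretely, $\frac{d}{ds}\mathbb{E}_{\eta}\mathbb{E}^\kappa_\xi D(\xi_{t-s},\eta_s)=0$ for $s\in[0,t]$, which is legitimate on the finite state space $\Omega_{N,\kappa}\times\Omega_N$ (take independent copies and use Kolmogorov's equations).

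\textbf{Rewriting the generators.} Since $p$ is symmetric, both generators are sums over unordered nearest-neighbour bonds $\{x,y\}$:
\begin{align*}
\mathcal{L}_N f(\eta)=\frac{N^2}{2d}\sum_{\{x,y\}}\big[f(\eta^{x,y})-f(\eta)\big].
\end{align*}
The exclusion factor $\eta(x)(1-\eta(x+z))$ may be dropped, because exchanging two equal occupation values leaves $\eta$ unchanged and so contributes zero. The same form holds for $\mathcal{L}_{N,\kappa}$, and exchanges preserve $|\xi|=\kappa$. It therefore suffices to prove, for each bond $\{x,y\}$,
\begin{align*}
D(\xi,\eta^{x,y})-D(\xi,\eta)=D(\xi^{x,y},\eta)-D(\xi,\eta).
\end{align*}
The key observation is that the exchange is an involution acting symmetrically: $D(\xi,\eta^{x,y})=D(\xi^{x,y},\eta)$, since $\prod_z\eta^{x,y}(z)^{\xi(z)}=\prod_z\eta(z)^{\xi^{x,y}(z)}$ after relabelling $z\leftrightarrow$ its image under the transposition of $x$ and $y$.

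\textbf{Completing the argument.} I will verify this relabelling identity explicitly: the factors at sites other than $x,y$ agree, and the factors at $x,y$ are $\eta(y)^{\xi(x)}\eta(x)^{\xi(y)}$ in both expressions. Summing over bonds gives the generator identity, which completes the proof. No step is a genuine obstacle. The only points needing care are the reduction from the ordered-pair form with exclusion factor to the bond form, and the justification of the ODE argument on the finite state space.
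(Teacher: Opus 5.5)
Your proof is correct and follows the paper's route. You use the same duality function $D(\xi,\eta)=\prod_{x:\,\xi(x)=1}\eta(x)$ and verify $\mathcal{L}_N D=\mathcal{L}_{N,\kappa}D$ directly from the generators. Your bondwise identity $D(\xi,\eta^{x,y})=D(\xi^{x,y},\eta)$ is a cleaner packaging of the paper's case-by-case count, which tacitly needs the relabelling $(x,z)\mapsto(x+z,-z)$ to match the two sides. Your interpolation argument $\frac{d}{ds}\mathbb{E}_\eta\mathbb{E}^\kappa_\xi D(\xi_{t-s},\eta_s)=0$ also supplies the step from the generator identity to \eqref{self-duality}, which the paper only asserts.
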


\begin{proof}
For every $\kappa \in (0, N^d]$, define the function $D : \Omega_{N,\kappa} \times \Omega_N \rightarrow \mathbb{R}$ by
\begin{align}\label{function-D}
D(\xi, \eta) = \prod_{x \in \mathbb{T}^d_N, \, \xi(x) = 1} I_{\{\xi(x) = \eta(x)\}}.
\end{align}
For every $\xi \in \Omega_{N,\kappa}$ and $\eta \in \Omega_N$, by the definition of the generator $\mathcal{L}_N$, we compute:
\begin{align*}
&\mathcal{L}_N D(\xi, \eta)
= \mathcal{L}_N \left( \prod_{x \in \mathbb{T}^d_N,\, \xi(x) = 1} \eta(x) \right) \\
&= N^2 \sum_{x \in \mathbb{T}^d_N} \sum_{z \in \mathbb{T}^d_N} \eta(x)(1 - \eta(x+z))p(z) \left[ \prod_{y \in \mathbb{T}^d_N,\, \xi(y) = 1} \eta^{x,x+z}(y) - \prod_{y \in \mathbb{T}^d_N,\, \xi(y) = 1} \eta(y) \right] \\
&= N^2 \sum_{x \in \mathbb{T}^d_N} \sum_{z \in \mathbb{T}^d_N} \eta(x)(1 - \eta(x+z))p(z) \left[ I_{\{\xi(x) = 0,\, \xi(x+z) = 1\}} - I_{\{\xi(x) = 1,\, \xi(x+z) = 0\}} \right]\prod_{y \in \mathbb{T}^d_N\slash\{x,z\},\, \xi(y) = 1} \eta(y) \\
&= N^2 \frac{1}{2d} \left| \left\{ (x,z) \in \mathbb{T}^d_N : \eta(x) = 1,\, \eta(x+z) = 0,\, \xi(x+z) = 1,\, \xi(x) = 0,\  \xi(y)=\eta(y),\ y\in\mathbb{T}^d_N\slash\{x,z\} \right\} \right| \\
&\quad - N^2 \frac{1}{2d} \left| \left\{ (x,z) \in \mathbb{T}^d_N : \eta(x) = 1,\, \eta(x+z) = 0,\, \xi(x) = 1,\, \xi(x+z) = 0,\ \xi(y)=\eta(y),\ y\in\mathbb{T}^d_N\slash\{x,z\} \right\} \right|.
\end{align*}
By the same reasoning, we obtain
\begin{align*}
&\mathcal{L}_{N,\kappa} D(\xi, \eta)\\
&= N^2 \frac{1}{2d} \left| \left\{ (x,z) \in \mathbb{T}^d_N : \xi(x) = 1,\, \xi(x+z) = 0,\, \eta(x+z) = 1,\, \eta(x) = 0,\ \xi(y)=\eta(y),\ y\in\mathbb{T}^d_N\slash\{x,z\} \right\} \right| \\
&\quad - N^2 \frac{1}{2d} \left| \left\{ (x,z) \in \mathbb{T}^d_N : \xi(x) = 1,\, \xi(x+z) = 0,\, \eta(x) = 1,\, \eta(x+z) = 0,\ \xi(y)=\eta(y),\ y\in\mathbb{T}^d_N\slash\{x,z\} \right\} \right| \\
&= \mathcal{L}_N D(\xi, \eta).
\end{align*}
This completes the proof.
\end{proof}

We are now ready to prove Lemma~\ref{dual-semigroup}.

\begin{proof}[Proof of Lemma~\ref{dual-semigroup}]
Fix $\xi = \delta_x$, where $\delta_x$ denotes the Dirac mass at $x$ viewed as a configuration in $\Omega_N$. Let $D$ be the duality function defined in \eqref{function-D}, and let $(X_N(t))_{t \in [0,T]}$ be the random walk associated with the one-particle SSEP.

To avoid ambiguity, we represent the semigroup $(P_N(t))_{t \in [0,T]}$ via the relation
\begin{align*}
P_N(t)f(\eta) = \mathbb{E}_{\eta} f(\hat{\eta}_t),
\end{align*}
where $(\hat{\eta}_t)_{t \in [0,T]}$ is an independent copy of the SSEP generated by $\mathcal{L}_N$ on another probability space $(\hat{\Omega}, \hat{\mathcal{F}}, \hat{\mathbb{P}})$.

Using the self-duality relation \eqref{self-duality}, we compute
\begin{align*}
P_N(t-s)\eta_s(x)
&= \hat{\mathbb{E}} \left[ \hat{\eta}_{t-s}(x) \,\big|\, \hat{\eta}_0 = \eta_s \right] \\
&= \hat{\mathbb{E}} \left[ D(\delta_x, \hat{\eta}_{t-s}) \,\big|\, \hat{\eta}_0 = \eta_s \right] \\
&= \hat{\mathbb{E}} \left[ D(\delta_{X_N(t-s)}, \eta_s) \,\big|\, X_0 = x \right] \\
&= \hat{\mathbb{E}}_x \, \eta_s(X_N(t-s)) 
= \sum_{y \in \mathbb{T}^d_N} \eta_s(y) G_N(t-s, x - y).
\end{align*}

Consequently, by changing variables, we obtain
\begin{align*}
\sum_{x \in \mathbb{T}^d_N} P_N(t-s)\eta_s(x) \, \delta_{x}(\varphi)
&= \sum_{x \in \mathbb{T}^d_N} \sum_{y \in \mathbb{T}^d_N} \eta_s(y) G_N(t-s, x - y) \, \delta_{x}(\varphi) \\
&= \sum_{x \in \mathbb{T}^d_N} \eta_s(x) \, \delta_{x}(S_N(t-s)\varphi).
\end{align*}
This concludes the proof.
\end{proof}

\begin{lemma}\label{Duhamel-formula}
Under the same assumptions as in Lemma~\ref{dual-semigroup}, the following Duhamel formula holds almost surely for every $t \in [0,T]$:
\begin{align*}
	\langle \pi^N_t, \phi \rangle 
	&= \langle P_N(t)\pi^N_0, \phi \rangle + \int_0^t \langle \phi, P_N(t-s) \, dM^N_s \rangle \\
	&= \langle \pi^N_0, S_N(t)\phi \rangle + \int_0^t \langle S_N(t-s)\phi, dM^N_s \rangle.
\end{align*}
\end{lemma}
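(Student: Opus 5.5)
The plan is to derive the Duhamel formula from the Dynkin decomposition \eqref{Dynkin-1} applied not to a fixed test function but to the time-dependent test function $s\mapsto S_N(t-s)\phi$, for a fixed terminal time $t$. The second line of the statement is the core identity. The first line then follows from it by Lemma~\ref{dual-semigroup}, which identifies $\langle P_N(t)\pi^N_0,\phi\rangle$ with $\langle \pi^N_0,S_N(t)\phi\rangle$; interpreting $P_N(t-s)\,dM^N_s$ componentwise in the same way identifies the two stochastic integrals.

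\textbf{Step 1.} Write the martingale part in differential form. For each site $x$ set
\[
M^N_t(x):=\eta_t(x)-\eta_0(x)-\int_0^t\mathcal{L}_N\eta_s(x)\,ds .
\]
Each $M^N(x)$ is a martingale by Dynkin's formula, and $M^{N,\phi}_t=N^{-d}\sum_x\phi(x)M^N_t(x)=:\langle \phi,M^N_t\rangle$. By \eqref{L-N}, $\mathcal{L}_N\eta(x)=\frac12\Delta_N\eta(x)$, and since the discrete Laplacian is symmetric on $\mathbb{T}^d_N$ we get, for any deterministic $\psi:\mathbb{T}^d_N\to\mathbb{R}$,
\[
\langle \pi^N_s,\tfrac12\Delta_N\psi\rangle=\mathcal{L}_N\langle\pi^N_s,\psi\rangle .
\]

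\textbf{Step 2.} Apply a time-dependent It\^o/Dynkin product rule to $F(s,\eta)=\langle\pi^N(\eta),\psi_s\rangle$ with $\psi_s:=S_N(t-s)\phi$, $s\in[0,t]$. Because the state space is finite, $\psi_s$ is $C^1$ in $s$ and $\partial_s\psi_s=-\frac12\Delta_N\psi_s$ (the generator of $S_N$ is $\frac12\Delta_N$, by the Remark), this is elementary: one can use the finite-dimensional semimartingale decomposition $d\eta_s(x)=\mathcal{L}_N\eta_s(x)ds+dM^N_s(x)$ together with the product rule for a deterministic $C^1$ function times a semimartingale. This gives
\[
d\langle\pi^N_s,\psi_s\rangle=\langle\pi^N_s,\partial_s\psi_s\rangle ds+\langle \pi^N_s,\tfrac12\Delta_N\psi_s\rangle ds+\langle\psi_s,dM^N_s\rangle .
\]
The two drift terms cancel by Step 1. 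Integrating over $[0,t]$ and using $\psi_t=\phi$ and $\psi_0=S_N(t)\phi$ yields
\[
\langle\pi^N_t,\phi\rangle=\langle\pi^N_0,S_N(t)\phi\rangle+\int_0^t\langle S_N(t-s)\phi,dM^N_s\rangle .
\]

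\textbf{Step 3.} Since $\psi_s$ is deterministic and bounded, the stochastic integral is a well-defined square-integrable martingale integral against the compensated jump martingales $M^N(x)$. The identity holds a.s. for each $t$; both sides are c\`adl\`ag in $t$, which upgrades it to all $t$ simultaneously.

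The main (mild) obstacle is Step 2, namely justifying the product rule with a time-dependent test function. On the finite state space $\Omega_N$ this reduces to the integration-by-parts formula for the finite-variation process $\psi_s$ against the pure-jump semimartingale $\eta_s(x)$. An alternative is to apply Dynkin's formula to the space-time function $(s,\eta)\mapsto F(s,\eta)$ with generator $\partial_s+\mathcal{L}_N$, which gives the same cancellation directly. A second alternative avoids calculus altogether: compute $\mathbb{E}[\langle\pi^N_t,\phi\rangle\mid\mathcal{F}_s]=\langle\pi^N_s,S_N(t-s)\phi\rangle$ via Lemma~\ref{dual-semigroup}, and identify the martingale increments.
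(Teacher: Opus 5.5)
Your proof is correct, but it runs in the opposite direction from the paper's.

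\textbf{The paper's route.} It starts from the measure-valued Dynkin martingale $M_t=\pi^N_t-\pi^N_0-\int_0^t\mathcal{L}_N\pi^N_s\,ds$. It then writes down the Duhamel formula with the Markov semigroup $P_N$ (the first line) essentially formally. Only afterwards does it invoke Lemma~\ref{dual-semigroup}, together with the linearity of $M$ in the configuration, to trade $P_N$ for the random-walk semigroup $S_N$ (the second line).

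\textbf{Your route.} You prove the second line directly, via the product rule for $\langle\pi^N_s,S_N(t-s)\phi\rangle$. There the drift $\langle\pi^N_s,\partial_s\psi_s\rangle+\mathcal{L}_N\langle\pi^N_s,\psi_s\rangle$ cancels. You use duality only at the end, to identify the $P_N$-expression.

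\textbf{Common ingredient.} Both arguments rest on the same structural fact. Together with the symmetry of $p$, \eqref{L-N} is precisely the infinitesimal form of one-particle self-duality. That is, $\mathcal{L}_N$ maps the linear observables $\eta\mapsto\eta(x)$ to linear observables through the one-particle jump generator $g\mapsto N^2\sum_z p(z)[g(\cdot+z)-g(\cdot)]$, which is also the generator of $S_N$. So your cancellation does not depend on how this operator is normalized in \eqref{L-N}. The paper uses this fact at the semigroup level; you use it at the generator level.

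\textbf{What each route buys.} Yours makes explicit the cancellation that the paper hides inside the phrase ``applying the Duhamel formula''. It is also fully elementary: on $\Omega_N$ each $M^N(x)$ has a.s.\ finitely many jumps plus an absolutely continuous drift. Hence the product rule is a pathwise Lebesgue--Stieltjes integration by parts, holding for all $t\in[0,T]$ on one full-measure event. This makes your Step~3 automatic and removes any need to pick a c\`adl\`ag version of the stochastic convolution. The paper's route, in turn, ties the formula to the self-duality framework it has just set up for general $\kappa$. That is the natural language if one later wants analogous identities for higher correlation functions.
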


\begin{proof}
By Dynkin's formula, the process
\begin{align}\label{martingale}
M_t = \pi^N_t - \pi^N_0 - \int_0^t \mathcal{L}_N \pi^N_s \, ds
\end{align}
defines a measure-valued martingale.

Applying the Duhamel formula to \eqref{martingale}, and testing against a smooth function $\phi$, we obtain
\begin{align*}
\langle \pi^N_t, \phi \rangle 
= \langle P_N(t)\pi^N_0, \phi \rangle + \int_0^t \langle \phi, P_N(t-s) \, dM^N_s \rangle.
\end{align*}

Next, by \eqref{martingale} again, we observe that the martingale $M_t$ depends linearly on the underlying SSEP configuration $\eta_t$. Therefore, we are allowed to apply the duality result stated in Lemma~\ref{dual-semigroup}, which relates the semigroup $P_N(t)$ acting on the configuration to the dual semigroup $S_N(t)$ acting on test functions.

Using Lemma~\ref{dual-semigroup}, we can exchange the semigroup and the test function, which yields
\begin{align*}
\langle \pi^N_t, \phi \rangle 
&= \langle P_N(t)\pi^N_0, \phi \rangle + \int_0^t \langle \phi, P_N(t-s) \, dM^N_s \rangle \\
&= \langle \pi^N_0, S_N(t)\phi \rangle + \int_0^t \langle S_N(t-s)\phi, dM^N_s \rangle.
\end{align*}
This completes the proof.
\end{proof}

\subsection{Basic setting of the SPDEs}
In Sections \ref{sec-5} and \ref{sec-6}, we study the fluctuating hydrodynamics of the SSEP. In preparation, this section introduces basic definitions of solutions to the relevant SPDEs, beginning with the description of the stochastic noise. Let $(e_k)_{k \geq 0}$ denote the standard Fourier basis of $L^2(\mathbb{T}^d)$, and let $m$ be a standard convolution kernel. For each $\delta > 0$, let $m_{\delta}$ be the standard mollifier on $\mathbb{T}^d$. Given $k \geq 0$ and $\delta > 0$, we define the mollified Fourier modes by $f_{\delta,k} := m_{\delta} \ast e_k$. 

We introduce the following Brownian motions:
\begin{align}\label{brownian}
W := \sum_{k \geq 0} e_k B_t^k \quad \text{and} \quad W_{\delta} := \sum_{k \geq 0} f_{\delta,k} B_t^k,
\end{align}
where $(B_t^k)_{k \geq 0}$ is a family of independent standard $d$-dimensional Brownian motions.

Furthermore, we define the associated coefficient functions:
\begin{equation*}
	F_{1,\delta} := \sum_{k \geq 0} |f_{\delta,k}|^2, \quad 
	F_{2,\delta} := \frac{1}{2} \sum_{k \geq 0} \nabla f_{\delta,k}^2, \quad 
	F_{3,\delta} := \sum_{k \geq 0} |\nabla f_{\delta,k}|^2.
\end{equation*}

It is important to note that, under this construction, $F_{1,\delta},F_{3,\delta}$ are constants, and we have the identity $F_{2,\delta} = 0$ for all $\delta > 0$. 

Let $\varepsilon>0$. As a continuous analogue of the SSEP, we investigate its associated fluctuating hydrodynamics equation in Section \ref{sec-5}, which is governed by the following stochastic PDE:
\begin{equation}\label{SPDE-1}
d\rho_{\varepsilon} = \frac{1}{2}\Delta\rho_{\varepsilon}\,dt - \varepsilon^{1/2} \nabla\cdot\Big(\sigma_{n(\varepsilon)}(\rho_{\varepsilon}) \circ dW_{\delta(\varepsilon)}\Big), \quad \rho_{\varepsilon}(0) = \rho_0, 
\end{equation}
where $\sigma_{n(\varepsilon)}(\cdot)$ is a smooth approximation of  $\sigma(\zeta)=\sqrt{\zeta(1-\zeta)},\ \zeta\in[0,1]$. We first introduce assumptions on the initial condition and the regularized mobility coefficient.

\begin{assumption}\label{Assump-initial-1}
We assume that the initial profile satisfies $\rho_0 \in C^{\infty}(\mathbb{T}^d)$ and is uniformly bounded away from the boundary values $0$ and $1$, i.e., there exists $c \in (0,1/2)$ such that $c \leq \rho_0 \leq 1 - c$ almost everywhere.
\end{assumption}

\begin{assumption}\label{Assump-initial-2}
We assume that the initial profile satisfies $\rho_0 \in C^{\infty}(\mathbb{T}^d)$ and is uniformly bounded away from the boundary values $0$, i.e., there exists $c \in (0,1/2)$ such that $\rho_0 \geq c$ almost everywhere.
\end{assumption}

Next, we formulate assumptions on the regularized mobility function, which approximates a singular square-root-type coefficient.

\begin{assumption}\label{Assump-sigman}
Suppose $\sigma(\cdot) \in C^{1}_{c}((0,1))$ and satisfies the following properties:
\begin{enumerate}
    \item[(1)] $\sigma \in C([0,1)) \cap C^{\infty}((0,1))$ with $\sigma(0)=\sigma(1)=0$ and $\sigma' \in C^{\infty}_c([0,1))$;
    \item[(2)] There exists a constant $c \in (0,\infty)$ such that for all $\zeta \in [0,1]$,
    \begin{align}
        |\sigma(\zeta)| \leq c, \quad \text{and} \quad |\sigma(\zeta)\sigma'(\zeta)| \leq c.
    \end{align}
\end{enumerate}
\end{assumption}

\begin{assumption}\label{Assump-sigman-sqrt}
Suppose $\sigma(\cdot) \in C^{1}_{c}((0,\infty))$ and satisfies the following properties:
\begin{enumerate}
    \item[(1)] $\sigma \in C([0,\infty)) \cap C^{\infty}((0,\infty))$ with $\sigma(0)=0$ and $\sigma' \in C^{\infty}_c([0,\infty))$;
    \item[(2)] There exists a constant $c \in (0,\infty)$ such that for all $\zeta \in [0,\infty)$,
    \begin{align}
        |\sigma(\zeta)| \leq c, \quad \text{and} \quad |\sigma(\zeta)\sigma'(\zeta)| \leq c.
    \end{align}
\end{enumerate}
\end{assumption}

The next lemma guarantees the existence of a regular approximation satisfying Assumption~\ref{Assump-sigman} or \ref{Assump-sigman-sqrt} for the canonical square-root coefficient.

\begin{lemma}\label{lem-sigma}
Let $\sigma(\zeta) = \sqrt{\zeta(1 - \zeta)}$ for all $\zeta \in [0,1]$. Then there exists a sequence of smooth functions $(\sigma_n(\cdot))_{n \geq 1}$ satisfying Assumption~\ref{Assump-sigman}, such that
$$
\sigma_n(\cdot) \longrightarrow \sigma(\cdot) \quad \text{in } C^1_{\mathrm{loc}}((0,1)), \quad \text{as } n \to \infty.
$$
Similarly, for the case $\sigma(\zeta) = \sqrt{\zeta}$ for all $\zeta \in [0,\infty)$, there exists a sequence of smooth functions $(\sigma_n(\cdot))_{n \geq 1}$ satisfying Assumption~\ref{Assump-sigman-sqrt}, such that
$$
\sigma_n(\cdot) \longrightarrow \sigma(\cdot) \quad \text{in } C^1_{\mathrm{loc}}((0,\infty)), \quad \text{as } n \to \infty.
$$

 \end{lemma}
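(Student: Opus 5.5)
The plan is an explicit construction: multiply $\sigma$ by a smooth cutoff that vanishes near the singular endpoints, and check the conditions of Assumption~\ref{Assump-sigman} (resp.\ Assumption~\ref{Assump-sigman-sqrt}) directly. The only delicate point is the uniform bound on $\sigma_n\sigma_n'$. The derivative of the cutoff is of size $n$, but it multiplies $\sigma^2$, which vanishes linearly at the endpoints, so the product stays bounded.

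\textbf{Case $\sigma(\zeta)=\sqrt{\zeta(1-\zeta)}$.} For each $n\geq 3$ I fix $\psi_n\in C^\infty_c((0,1))$ with the following properties:
\begin{itemize}
\item $0\leq\psi_n\leq 1$;
\item $\psi_n\equiv 1$ on $[1/n,1-1/n]$ and $\operatorname{supp}\psi_n\subset[1/(2n),1-1/(2n)]$;
\item $|\psi_n'|\leq Cn$ with $C$ independent of $n$ (obtained by rescaling a fixed cutoff profile).
\end{itemize}
Set $\sigma_n:=\sigma\psi_n$. Since $\sigma$ is smooth on $(0,1)$, we get $\sigma_n\in C^\infty_c((0,1))$. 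Hence $\sigma_n(0)=\sigma_n(1)=0$, $\sigma_n\in C([0,1))\cap C^\infty((0,1))$, and $\sigma_n'$ is smooth with compact support in $(0,1)\subset[0,1)$, which gives item (1).

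For item (2), first $|\sigma_n|\leq\sup|\sigma|=1/2$. Next I compute
\begin{align*}
\sigma_n\sigma_n'=\psi_n^2\,\sigma\sigma'+\sigma^2\psi_n\psi_n' = \psi_n^2\,\tfrac{1-2\zeta}{2}+\zeta(1-\zeta)\psi_n\psi_n'.
\end{align*}
The first term is bounded by $1/2$. The second term is supported where $\zeta\in[1/(2n),1/n]$ or $1-\zeta\in[1/(2n),1/n]$. There $\zeta(1-\zeta)\leq 1/n$, so the second term is bounded by $C$. Thus item (2) holds with a constant $c$ independent of $n$.

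For the convergence, let $K\subset(0,1)$ be compact. For all $n$ large enough, $K\subset[1/n,1-1/n]$, so $\sigma_n=\sigma$ on a neighbourhood of $K$. This gives convergence in $C^1_{\mathrm{loc}}((0,1))$.

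\textbf{Case $\sigma(\zeta)=\sqrt{\zeta}$.} Here there is an extra requirement: $\sigma_n$ must be bounded and $\sigma_n'$ must have compact support in $[0,\infty)$. This means $\sigma_n$ has to become constant at infinity, so I also saturate the argument.

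Take $\psi_n\in C^\infty([0,\infty))$ with $\psi_n=0$ on $[0,1/(2n)]$, $\psi_n=1$ on $[1/n,\infty)$, $0\leq\psi_n\leq1$ and $|\psi_n'|\leq Cn$. Take $\Theta_n\in C^\infty([0,\infty))$ nondecreasing with $\Theta_n(\zeta)=\zeta$ on $[0,n]$, $\Theta_n\equiv n+1/2$ on $[n+1,\infty)$, and $0\leq\Theta_n'\leq 1$. Define $\sigma_n:=\psi_n\sqrt{\Theta_n}$. This is smooth, since $\Theta_n\geq 1/(2n)>0$ on the support of $\psi_n$.

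Item (1) is checked as follows. We have $\sigma_n(0)=0$. Moreover $\sigma_n'$ vanishes on $[0,1/(2n)]$ and on $[n+1,\infty)$, so it is supported in $[0,n+1]$.

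For item (2), the bound $|\sigma_n|\leq\sqrt{n+1}$ holds; here the constant $c$ in Assumption~\ref{Assump-sigman-sqrt} is allowed to depend on $n$, as it must, since $\sup\sigma_n\to\infty$. The product term is in fact uniformly bounded:
\begin{align*}
\sigma_n\sigma_n'=\tfrac12\psi_n^2\Theta_n'+\Theta_n\psi_n\psi_n'.
\end{align*}
The first term is bounded by $1/2$. The second term is supported in $[1/(2n),1/n]$, where $\Theta_n=\zeta\leq1/n$, so it is bounded by $C$.

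Finally, on any compact $K\subset(0,\infty)$ we have $\sigma_n=\sqrt{\zeta}$ for $n$ large, which yields the $C^1_{\mathrm{loc}}((0,\infty))$ convergence.
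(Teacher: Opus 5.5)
The paper states this lemma without proof, so there is no route to compare against, and your explicit cutoff construction has to stand on its own. For $\sigma(\zeta)=\sqrt{\zeta(1-\zeta)}$ it is correct and complete. It also gives two properties the paper relies on later in Section~\ref{sec-5}: $\sigma_n=\sigma$ on $[c,1-c]$ as soon as $n\geq 1/c$, and $\sup_n\|\sigma_n\sigma_n'\|_{L^\infty([0,1])}<\infty$.

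For $\sigma(\zeta)=\sqrt{\zeta}$ your construction does not meet the assumption as written. Assumption~\ref{Assump-sigman-sqrt} begins by requiring $\sigma\in C^1_c((0,\infty))$, so each $\sigma_n$ must vanish near $0$ and also for all large $\zeta$. Your $\sigma_n=\psi_n\sqrt{\Theta_n}$ equals the nonzero constant $\sqrt{n+1/2}$ on $[n+1,\infty)$, so it is not compactly supported. You only checked that $\sigma_n'$ has compact support, which makes $\sigma_n$ eventually constant but not eventually zero.

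The repair is cheap: cut off to zero instead of saturating. Take $\theta\in C^\infty([0,\infty))$ with $0\leq\theta\leq 1$, $\theta=1$ on $[0,1]$ and $\theta=0$ on $[2,\infty)$, and set $\sigma_n(\zeta):=\psi_n(\zeta)\,\theta(\zeta/n)\sqrt{\zeta}$. Then $\operatorname{supp}\sigma_n\subset[1/(2n),2n]$, $\sigma_n=\sqrt{\zeta}$ on $[1/n,n]$, $|\sigma_n|\leq\sqrt{2n}$, and
\begin{align*}
\sigma_n\sigma_n'=\psi_n\psi_n'\,\theta(\zeta/n)^2\,\zeta+\psi_n^2\,\theta(\zeta/n)\,\theta'(\zeta/n)\,\frac{\zeta}{n}+\frac{1}{2}\psi_n^2\,\theta(\zeta/n)^2 .
\end{align*}
The three terms are bounded as follows:
\begin{itemize}
\item the first by $C$, exactly as in your computation;
\item the second by $2\|\theta'\|_{L^\infty}$, since $\zeta/n\leq 2$ on its support;
\item the third by $1/2$.
\end{itemize}
So the bound on $\sigma_n\sigma_n'$ stays uniform in $n$. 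The remaining items of the assumption and the $C^1_{\mathrm{loc}}((0,\infty))$ convergence follow verbatim from your argument. If the $C^1_c$ requirement in the assumption is an over-specification, your saturated version is also fine; as the assumption is stated, the extra cutoff is needed.
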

Given the structure of the noise, the stochastic equation \eqref{SPDE-1} can be equivalently rewritten in the It\^o form:
\begin{align}\label{SPDE-1-ito}
d\rho_{\varepsilon} = \frac{1}{2}\Delta \rho_{\varepsilon}\,dt - \varepsilon^{1/2} \nabla\cdot\left(\sigma_{n(\varepsilon)}(\rho_{\varepsilon})\, dW_{\delta(\varepsilon)}\right) + \frac{\varepsilon}{2} \nabla\cdot\left(\sigma_{n(\varepsilon)}'(\rho_{\varepsilon})^2\, \nabla \rho_{\varepsilon} \right) F_{1,\delta(\varepsilon)}.
\end{align}

Let $\varepsilon, n(\varepsilon), \delta(\varepsilon) > 0$ be fixed parameters, and let $(S(t))_{t \in [0,T]}$ denote the heat semigroup generated by $\frac{1}{2}\Delta$ on $\mathbb{T}^d$. We now provide the definition of a mild solution to \eqref{SPDE-1-ito}.

\begin{definition}
Assume that $\rho_0$ and $\sigma_{n(\varepsilon)}(\cdot)$ satisfy Assumptions \ref{Assump-initial-1} and \ref{Assump-sigman}, respectively. An $L^2(\mathbb{T}^d)$-valued, $\mathcal{F}_t$-adapted process $\rho_{\varepsilon}$ is called a \emph{mild solution} to \eqref{SPDE-1-ito} with initial data $\rho_0$ if, almost surely, $\rho_{\varepsilon} \in C([0,T]; L^2(\mathbb{T}^d)) \cap L^2([0,T]; H^1(\mathbb{T}^d))$, and
\begin{equation} \label{mild-1}
\rho_{\varepsilon}(t) = S(t)\rho_0 
- \varepsilon^{1/2} \int_0^t S(t-s) \nabla\cdot(\sigma_{n(\varepsilon)}(\rho_{\varepsilon}) \, dW_{\delta(\varepsilon)}) 
+ \frac{\varepsilon}{2} \int_0^t S(t-s) \nabla \cdot \left( \sigma_{n(\varepsilon)}'(\rho_{\varepsilon})^2 \, \nabla \rho_{\varepsilon} \right) F_{1,\delta(\varepsilon)} \, ds,
\end{equation}
for every $t \in [0,T]$.
\end{definition}

\begin{theorem}
Assume that $\rho_0$ and $\sigma_{n(\varepsilon)}(\cdot)$ satisfy Assumptions \ref{Assump-initial-1} and \ref{Assump-sigman}, respectively. Then there exists a unique mild solution to \eqref{SPDE-1-ito} with initial data $\rho_0$.
\end{theorem}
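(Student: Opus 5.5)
We prove existence and uniqueness by a Banach fixed-point argument on the mild formulation \eqref{mild-1}, with $\varepsilon$, $n=n(\varepsilon)$ and $\delta=\delta(\varepsilon)$ fixed. The equation is then a quasilinear parabolic SPDE with smooth, bounded coefficients.

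\textbf{Coefficients.} By Assumption~\ref{Assump-sigman}, the maps $\sigma_n$ and $\sigma_n'$ are smooth and bounded, and $\sigma_n$ is globally Lipschitz on the relevant range. We extend $\sigma_n$ outside $[0,1]$ by a bounded Lipschitz extension; the extension does not matter a posteriori. The noise $W_\delta$ is spatially smooth, with $F_{1,\delta}$ and $F_{3,\delta}$ finite constants.

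\textbf{Rewriting the correction term.} We write the It\^o correction as
\[
\tfrac{\varepsilon}{2}F_{1,\delta}\nabla\cdot\bigl(\sigma_n'(\rho)^2\nabla\rho\bigr)=\tfrac{\varepsilon}{2}F_{1,\delta}\Delta\Psi_n(\rho),\qquad \Psi_n'=(\sigma_n')^2 .
\]
The first step is to move this term into the principal part, which gives the nondegenerate quasilinear equation
\[
d\rho=\nabla\cdot\bigl(a(\rho)\nabla\rho\bigr)\,dt-\varepsilon^{1/2}\nabla\cdot\bigl(\sigma_n(\rho)\,dW_\delta\bigr),\qquad a(\zeta)=\tfrac12+\tfrac{\varepsilon}{2}F_{1,\delta}\sigma_n'(\zeta)^2\in\Bigl[\tfrac12,\,C\Bigr].
\]

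\textbf{Existence of variational solutions.} We construct variational solutions in the Gelfand triple $H^1\subset L^2\subset H^{-1}$.
\begin{itemize}
\item Take Galerkin approximations on the Fourier modes.
\item Derive the energy estimate by It\^o's formula for $\|\rho\|_{L^2}^2$. Uniform ellipticity gives the dissipation $\int a(\rho)|\nabla\rho|^2$.
\item Control the noise contribution $\frac{\varepsilon}{2}\sum_k\|\nabla\cdot(\sigma_n(\rho)f_{\delta,k})\|_{L^2}^2$. It is bounded by $\varepsilon\bigl(F_{1,\delta}\|\sigma_n'\|_\infty^2\|\nabla\rho\|^2+F_{3,\delta}\|\sigma_n\|_\infty^2\bigr)$.
\end{itemize}

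\textbf{Main obstacle.} The term $\varepsilon F_{1,\delta}\|\sigma_n'\|^2_\infty\|\nabla\rho\|^2$ must be absorbed by the dissipation $\frac12\|\nabla\rho\|^2+\frac{\varepsilon}{2}F_{1,\delta}\int\sigma_n'(\rho)^2|\nabla\rho|^2$. This works precisely because of the Stratonovich structure: the It\^o correction exactly compensates the gradient part of the quadratic variation, pointwise in $\rho$, provided $F_{2,\delta}=0$. The same cancellation is what makes Stratonovich conservative equations coercive. We verify the absorption carefully by expanding $\nabla\cdot(\sigma_n(\rho)f_{\delta,k})=\sigma_n'(\rho)\nabla\rho\cdot f_{\delta,k}+\sigma_n(\rho)\nabla\cdot f_{\delta,k}$. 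The cross terms vanish after summation over $k$ because $F_{2,\delta}=0$.

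\textbf{Passage to the limit.} The energy estimate yields bounds in $L^2(\Omega;C([0,T];L^2))\cap L^2(\Omega;L^2([0,T];H^1))$. Since $a$ is nonlinear, we pass to the limit via compactness: tightness (Aldous criterion plus Aubin–Lions) and Skorokhod/Gyöngy–Krylov. This yields a martingale solution.

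\textbf{Uniqueness.} Pathwise uniqueness holds in the $L^1$ sense by a kinetic/entropy argument. Alternatively, and more simply, use an $H^{-1}$ contraction estimate: the difference of solutions satisfies an It\^o inequality in which the nonlinear diffusion contributes $-\langle\Psi(\rho_1)-\Psi(\rho_2),\rho_1-\rho_2\rangle\le0$, with $\Psi(\zeta)=\int_0^\zeta a$. The noise difference is controlled by Lipschitz bounds on $\sigma_n$, $\sigma_n'$ together with $H^1$ integrability. Gyöngy–Krylov then upgrades the martingale solution to a unique strong solution.

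\textbf{Mild form and range of the solution.} The variational solution satisfies \eqref{mild-1}: test against $S(t-s)\phi$ and use that $S$ is generated by $\frac12\Delta$. The remaining terms then appear exactly as the stochastic convolution and the correction convolution. Finally, we show $\rho\in[0,1]$ by an $L^2$ estimate on $(\rho)^-$ and $(\rho-1)^+$ using $\sigma_n(0)=\sigma_n(1)=0$, so the extension of $\sigma_n$ is never seen.
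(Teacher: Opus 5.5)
Your route differs from the paper's. The paper's proof is a single sentence: well-posedness is imported from Fehrman--Gess \cite{FG24}, a kinetic theory with uniqueness via an $L^1$-contraction, and then rewritten in the mild form \eqref{mild-1} by Duhamel's formula. You give a self-contained construction instead. You move the It\^o correction into a uniformly elliptic quasilinear operator, build martingale solutions by Galerkin approximation and compactness, prove pathwise uniqueness, apply Gy\"ongy--Krylov, pass from the variational to the mild formulation, and check invariance of $[0,1]$. This makes the Stratonovich cancellation explicit: the correction removes the $F_{1,\delta}\sigma_n'(\rho)^2|\nabla\rho|^2$ part of the quadratic variation because $F_{2,\delta}=0$. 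It also handles a point the paper leaves implicit, namely that $\sigma_n$ is only specified on $[0,1]$. The price is that every step uses $\|\sigma_n'\|_\infty<\infty$ at fixed $n$. Unlike the kinetic framework, it therefore says nothing about the square-root coefficients of Section~\ref{sec-6}.

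Some presentational points:
\begin{itemize}
\item Your first sentence announces a Banach fixed point on \eqref{mild-1} that you never carry out. It would also be awkward, since the correction term loses a derivative and is not small for fixed parameters.
\item The sup-norm bound in your third bullet cannot be absorbed by the dissipation. Only the exact identity $\sum_k|\nabla(\sigma_n(\rho)f_{\delta,k})|^2=F_{1,\delta}\sigma_n'(\rho)^2|\nabla\rho|^2+F_{3,\delta}\sigma_n(\rho)^2$ gives the cancellation, as you note afterwards.
\item The extension of $\sigma_n$ should be $C^1$ with bounded derivative, not merely Lipschitz. Otherwise $a$ need not be continuous when you identify the limit of $a(\rho_N)\nabla\rho_N$.
\end{itemize}

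\textbf{The gap: the $H^{-1}$ uniqueness argument.} The ``simpler'' $H^{-1}$ uniqueness argument fails as written. Let $u=\rho_1-\rho_2$ (mean zero) and $G=\sigma_n(\rho_1)-\sigma_n(\rho_2)$. Using $\sum_j\|\partial_j v\|_{H^{-1}}^2\le\|v\|_{L^2}^2$ with $v=Gf_{\delta,k}$ and summing over $k$, the drift in the It\^o formula for $\|u\|_{H^{-1}}^2$ is bounded by
\begin{equation*}
-2\bigl\langle u,\Psi(\rho_1)-\Psi(\rho_2)\bigr\rangle+\varepsilon F_{1,\delta}\|G\|_{L^2}^2 .
\end{equation*}
The noise term is of size $\|u\|_{L^2}^2$, not $\|u\|_{H^{-1}}^2$. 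If you only use that the diffusion term is $\le 0$, nothing closes. Gronwall in $H^{-1}$ is unavailable. Interpolating with the $H^1$ bounds gives a term $\|u\|_{H^{-1}}\|u\|_{H^1}$, which does not force $u\equiv0$ from $u(0)=0$. Lipschitz bounds together with $\Psi'\ge\frac12$ close only if $\varepsilon F_{1,\delta}\|\sigma_n'\|_\infty^2\le 1$, which you cannot assume for fixed parameters.

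The missing idea is the same Stratonovich structure you used for existence. Since $\Psi'=a$, Jensen gives
\begin{equation*}
2u\bigl(\Psi(\rho_1)-\Psi(\rho_2)\bigr)=u^2+\varepsilon F_{1,\delta}\,u^2\int_0^1\sigma_n'(\rho_2+\theta u)^2\,d\theta\ \ge\ u^2+\varepsilon F_{1,\delta}\,G^2 .
\end{equation*}
Hence the drift is $\le-\|u\|_{L^2}^2$ and $\mathbb{E}\|u(t)\|_{H^{-1}}^2\le 0$. No Lipschitz bound on $\sigma_n'$ and no $H^1$ integrability is needed. Alternatively, the kinetic $L^1$ argument you mention first is exactly what \cite{FG24} provides, which puts that step back on the paper's route.
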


\begin{proof}
The result follows by combining the arguments in \cite{FG24} with Duhamel's formula.
\end{proof}

Furthermore, in Section \ref{sec-6}, we investigate the SPDE 
\begin{equation}\label{SPDE-2}
d\rho_{\varepsilon}=\frac{1}{2}\Delta\rho_{\varepsilon}dt-\varepsilon^{1/2}\nabla\cdot(\sigma(\rho_{\varepsilon})\circ dW_{\delta(\varepsilon)}), 	
\end{equation}
where $\sigma(\cdot)$ denotes a possibly irregular coefficient function, for example, $\sigma(\zeta)=\sqrt{\zeta}$. This equation can be equivalently rewritten in It\^o form as
\begin{equation}\label{SPDE-irregular}
d\rho_{\varepsilon}=\frac{1}{2}\Delta\rho_{\varepsilon}dt-\varepsilon^{1/2}\nabla\cdot(\sigma(\rho_{\varepsilon}) dW_{\delta(\varepsilon)})+\frac{\varepsilon}{2}\nabla\cdot(\sigma'(\rho_{\varepsilon})^2\nabla\rho_{\varepsilon})F_{1,\delta(\varepsilon)}dt. 	
\end{equation}
Let $\chi_{\varepsilon}=I_{\{0<\zeta<\rho_{\varepsilon}\}}$ denote the associated renormalized kinetic function. Applying the chain rule formally, we obtain the following identity:
\begin{align}\label{kinetic-formula}
d\chi_{\varepsilon}=&\frac{1}{2}\nabla\cdot(\delta_0(\zeta-\rho_{\varepsilon})\nabla\rho_{\varepsilon})dt-\varepsilon^{1/2}\delta_0(\rho_{\varepsilon}-\zeta)\nabla\cdot(\sigma(\rho_{\varepsilon})dW_{\delta(\varepsilon)})\notag\\
&+\partial_{\zeta}p_{\varepsilon}dt-\frac{\varepsilon}{2}\partial_{\zeta}(\delta_0(\zeta-\rho_{\varepsilon})\sigma(\zeta)^2F_{3,\delta(\varepsilon)})dt+\frac{\varepsilon}{2}F_{1,\delta(\varepsilon)}\delta_0(\rho_{\varepsilon}-\zeta)\nabla\cdot(\sigma'(\rho_{\varepsilon})^2\nabla\rho_{\varepsilon})dt, 	
\end{align}
where the defect measure $p_{\varepsilon}$ is given by
\begin{align}\label{kineticmeasure-eq}
p_{\varepsilon}=\frac{1}{2}\delta_0(\zeta-\rho_{\varepsilon})|\nabla\rho_{\varepsilon}|^2.  	
\end{align}
We remark that this identity does not always valid. As observed in \cite{FG24}, the compactness of approximating sequences for \eqref{SPDE-2} cannot guarantee convergence of the right-hand side of \eqref{kineticmeasure-eq}. Instead, by invoking the lower semi-continuity of the $L^2(\mathbb{T}^d)$-norm, we are led to consider the distributional inequality
\begin{align}\label{kineticmeasure-ineq}
p_{\varepsilon}\geq\frac{1}{2}\delta_0(\zeta-\rho_{\varepsilon})|\nabla\rho_{\varepsilon}|^2.    	
\end{align}
This motivates a generalized definition of the kinetic measure. For clarity, we illustrate the notion using the concrete example $\sigma(\zeta)=\sqrt{\zeta}$, which corresponds to the Dean-Kawasaki equation. The case $\sigma(\zeta)=\sqrt{\zeta(1-\zeta)}$ associated with the fluctuating SSEP can be treated analogously.

\begin{definition}\label{def-kineticmeasure}
	 A \emph{kinetic measure} is a map $p$ from $\Omega$ into the space of nonnegative, locally finite measures on $\mathbb{T}^{d}\times(0,\infty)\times[0,T]$ such that for every test function $\psi\in C^{\infty}_c(\mathbb{T}^{d}\times (0,\infty))$, the mapping
	\begin{align*}
	(\omega,t)\in \Omega\times[0,T]\mapsto
	\int^t_0\int_{\mathbb{R}}\int_{\mathbb{T}^{d}}\psi(x,\zeta)\,\mathrm{d}p(x,\zeta,r)
	\end{align*}
	is $\mathcal{F}_t$-predictable.
\end{definition}

We now proceed to introduce the concept of renormalized kinetic solutions. 
\begin{definition}\label{def-kineticsolution} 
Let $\sigma(\zeta)=\sqrt{\zeta}$ and assume that $\rho_0$ satisfies Assumption \ref{Assump-initial-1}. For every $\varepsilon>0$, a nonnegative, $L^1(\mathbb{T}^{d})$-valued $\mathcal{F}_t$-progressively measurable process $\rho_{\varepsilon}$ is called a \emph{renormalized kinetic  solution} of \eqref{SPDE-irregular} with initial data $\rho_0$ if the following properties hold.
\begin{enumerate}
		\item Preservation of mass: almost surely for every $t\in[0,T]$,
		\begin{equation}\label{preservation-mass}
		\|\rho_{\varepsilon}(t)\|_{L^{1}(\mathbb{T}^{d})}=\|\rho_0\|_{L^{1}\left(\mathbb{T}^{d}\right)}.
		\end{equation}
		\item Entropy and energy dissipation estimate: there exists a constant $c\in(0,\infty)$, which depends on $T,\|\rho_0\|_{L^1(\mathbb{R}^{2d})}$, such that
		\begin{equation}\label{L2-es}
			\mathbb{E}\int_{0}^{T}\int_{\mathbb{T}^{d}}|\nabla \sqrt{\rho_{\varepsilon}(t)}|^2dzdt\leq c(T,\rho_0).
		\end{equation}
		Furthermore, there exists a nonnegative kinetic measure $p_{\varepsilon}$ satisfying the following properties.
		\item Regularity: in distributional sense, we have 
		\begin{align}\label{control KM}
		4\delta_{0}(\zeta-\rho_{\varepsilon})\zeta|\nabla \sqrt{\rho_{\varepsilon}}|^{2}\leq p_{\varepsilon}\ \text{on}\ \mathbb{T}^{d}\times(0,\infty)\times[0,T], \quad \mathbb{P}-a.s.
		\end{align}
		\item Vanishing at infinity:
		\begin{align}\label{control CKM VI}
		\liminf_{M\to \infty}\mathbb{E}\Big[p_{\varepsilon}(\mathbb{T}^{d}\times [0,T]\times [M,M+1])\Big]=0.
		\end{align}
		\item The equation: almost surely for every $t\in[0,T]$ and $\psi\in\mathrm{C}_{c}^{\infty}\left(\mathbb{T}^{d}\times(0,\infty)\right)$,
	\begin{align}\label{MC kenitic solution}
&\int_{\mathbb{R}}\int_{\mathbb{T}^{d}} \chi_{\varepsilon}(x, \zeta, t) \psi(x, \zeta)=\int_{\mathbb{R}} \int_{\mathbb{T}^{d}} \chi_{\varepsilon}(x, \zeta, 0)  \psi(x, \zeta)-\frac{1}{2}\int_{0}^{t} \int_{\mathbb{T}^{d}}\nabla\rho_{\varepsilon}\cdot(\nabla\psi)(x,\rho_{\varepsilon})\notag\\
&-\varepsilon^{1/2}\int^t_0\int_{\mathbb{T}^{d}}\psi(x,\rho_{\varepsilon})\nabla\cdot(\sigma(\rho_{\varepsilon})dW_{\delta(\varepsilon)})-\frac{\varepsilon}{2}\int^t_0\int_{\mathbb{T}^{d}}F_{1,\delta(\varepsilon)}\sigma'(\rho_{\varepsilon})^2\nabla\rho_{\varepsilon}\cdot(\nabla\psi)(x,\rho_{\varepsilon})\notag\\
&+\frac{\varepsilon}{2}\int^t_0\int_{\mathbb{T}^{d}}F_{3,\delta(\varepsilon)}\sigma^2(\rho_{\varepsilon})(\partial_{\zeta}\psi)(z,\rho_{\varepsilon})-\int^t_0\int_{\mathbb{R}}\int_{\mathbb{T}^{d}}\partial_{\zeta}\psi dp_{\varepsilon}. 
\end{align}
\end{enumerate}			
\end{definition}
The following well-posedness result is a straightforward consequence of \cite{FG24}.  
\begin{theorem}
Assume that $\rho_0$ satisfies Assumption \ref{Assump-initial-1}. Then there exists a unique renormalized kinetic solution of \eqref{SPDE-irregular} with initial data $\rho_0$. 
\end{theorem}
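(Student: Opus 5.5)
The plan is to derive well-posedness from the general theory of Fehrman and Gess \cite{FG24}, after checking that the present setting fits their framework. Equation \eqref{SPDE-irregular} with $\sigma(\zeta)=\sqrt{\zeta}$ on $\mathbb{T}^d$ is of the form $d\rho=\Delta\Phi(\rho)\,dt-\nabla\cdot(\sigma(\rho)\circ dW_{\delta})$ treated there. Here $\Phi(\zeta)=\zeta/2$ is linear and the noise $W_{\delta(\varepsilon)}$ is spatially smooth, with $F_{1,\delta},F_{3,\delta}$ constant and $F_{2,\delta}=0$. The parameter $\varepsilon$ is fixed throughout, so the factor $\varepsilon^{1/2}$ on the noise only rescales the coefficient: $\varepsilon^{1/2}\sigma$ is still a square-root coefficient.

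\textbf{Hypotheses of \cite{FG24}.} The first step is to confirm that their structural assumptions hold. They require:
\begin{itemize}
\item $\Phi$ nondecreasing, which holds trivially since $\Phi$ is linear;
\item $\sigma\in C([0,\infty))\cap C^\infty((0,\infty))$ with $\sigma(0)=0$;
\item $(\sigma')^2$ locally integrable away from zero;
\item the compatibility conditions relating $\sigma^2$ and $\Phi'$ near zero (for $\sigma=\sqrt{\zeta}$ these reduce to $\sigma\sigma'$ being bounded);
\item growth bounds at infinity.
\end{itemize}
The initial datum under Assumption~\ref{Assump-initial-1} is smooth and bounded, so it has finite entropy $\int\rho_0\log\rho_0$ and lies in $L^1$, which is all their theory needs. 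We also check that Definition~\ref{def-kineticsolution} matches their notion of renormalized kinetic solution: the mass identity \eqref{preservation-mass}, the entropy bound \eqref{L2-es}, the measure lower bound \eqref{control KM}, the vanishing condition \eqref{control CKM VI}, and the kinetic equation \eqref{MC kenitic solution}. In particular, the $F_{3,\delta}$ and $F_{1,\delta}$ terms in \eqref{MC kenitic solution} are exactly the Itô–Stratonovich correction terms of their formulation once $F_{2,\delta}=0$ is used.

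\textbf{Existence.} To make the argument self-contained at the level of a sketch, I would follow their route. Replace $\sigma$ by the regularizations $\sigma_n$ of Lemma~\ref{lem-sigma} and solve the regularized equation, which is well-posed in the mild sense exactly as for \eqref{SPDE-1-ito}. Then derive uniform-in-$n$ estimates:
\begin{itemize}
\item the entropy dissipation estimate, from Itô's formula applied to $\int\rho\log\rho$, giving \eqref{L2-es}; the Itô correction is controlled because $\sigma_n\sigma_n'$ is uniformly bounded;
\item mass conservation, from the divergence structure of the equation;
\item $W^{\beta,1}$-type time and space regularity estimates for truncations of $\rho_n$.
\end{itemize}
These give tightness. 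By Skorokhod's theorem, one passes to the limit in the kinetic formulation. The kinetic measures converge weakly, and \eqref{control KM} survives as an inequality by weak lower semicontinuity, which is why only the inequality \eqref{kineticmeasure-ineq} is asserted.

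\textbf{Uniqueness and the main obstacle.} Uniqueness will be the main obstacle. The approach is an $L^1$-contraction argument comparing two kinetic functions $\chi^1,\chi^2$ by doubling variables, using a test function $\rho_\gamma(x-y)\varphi_\kappa(\zeta-\xi)$ together with cutoffs $\eta_\beta$ near $\zeta=0$ and $\zeta=M$. The difficult terms are the cutoff errors near zero, where $\sigma'$ is singular. These are controlled by the kinetic measure bound \eqref{control KM} and the integrability $\zeta|\nabla\sqrt{\rho}|^2\in L^1$, following Fehrman–Gess. The errors at infinity vanish by \eqref{control CKM VI}. Finally, pathwise uniqueness combined with existence of weak (probabilistic) solutions gives a unique probabilistic strong solution via Gyöngy–Krylov. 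Since all of these steps are carried out in \cite{FG24}, the proof here reduces to verifying the hypotheses and the identification of definitions done in the first step.
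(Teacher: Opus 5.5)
Your proposal takes the same route as the paper, whose entire proof is the remark that the result is a straightforward consequence of \cite{FG24}. Your hypothesis check ($\Phi(\zeta)=\zeta/2$, constant $F_{1,\delta},F_{3,\delta}$ with $F_{2,\delta}=0$, the fixed factor $\varepsilon^{1/2}$ absorbed into the square-root coefficient) and your outline of the Fehrman--Gess existence argument and the $L^1$-contraction plus Gy\"ongy--Krylov uniqueness argument are a more explicit version of that citation.

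One point your identification of definitions should flag is a constant. With $\Phi'\equiv\tfrac12$, the regularity condition of \cite{FG24}, $\delta_0(\zeta-\rho_\varepsilon)\Phi'(\zeta)|\nabla\rho_\varepsilon|^2\le p_\varepsilon$, reads $2\delta_0(\zeta-\rho_\varepsilon)\zeta|\nabla\sqrt{\rho_\varepsilon}|^2\le p_\varepsilon$, which is consistent with \eqref{kineticmeasure-ineq}. So the factor $4$ in \eqref{control KM} must be read as $2$ for the two notions to coincide.
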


\section{From Brownian particles to the mobility matrix}\label{sec-3}
Before embarking on the analysis of the SSEP, we first examine a related setting involving independent Brownian particles. This preliminary investigation allows us to establish analogous quantitative error estimates and to verify the optimality of the discretization error in time. More precisely, in this section, we derive an estimate for the discrepancy between the quadratic variation of the fluctuation field associated with independent Brownian motions and the corresponding mobility matrix. This serves as a benchmark for later results in the interacting particle case.

\textbf{Initial data.} Let $(X_i)_{i \geq 1}$ be a sequence of i.i.d.\ $\mathbb{T}^d$-valued, $\mathcal{F}$-measurable random variables with common density $\rho_0 \in C^\infty(\mathbb{T}^d)$. We have that, for every test function $\phi \in C_b(\mathbb{T}^d)$, there exists a constant $C>0$, such that
\begin{align}\label{fluc-initial}
\mathbb{E}\Big[N^{1/2}\Big(\frac{1}{N} \sum_{i=1}^N \phi(X_i) - \langle \phi, \rho_0 \rangle\Big)\Big]^2=&NVar\Big(\frac{1}{N}\sum_{i=1}^N\phi(X_i)\Big)\notag\\
=&\frac{1}{N}\sum_{i=1}^NVar(\phi(X_i))\notag\\
=&\frac{1}{N}\sum_{i=1}^N\mathbb{E}(\phi(X_i)-\mathbb{E}\phi(X_i))^2\leq C\|\phi\|_{L^{\infty}(\mathbb{T}^d)}^2, 
\end{align}
uniformly for every $N\geq0$. 

For each $N \in \mathbb{N}_+$, let $(B_i)_{i=1}^N$ be a sequence of i.i.d. Brownian motions on the torus $\mathbb{T}^d$, with initial conditions $(X_i)_{i=1}^N$. Define the empirical measure associated with these particles by
\begin{align*}
\pi_N = \frac{1}{N} \sum_{i=1}^N \delta_{B_i}.
\end{align*}
An application of It\^o's formula yields that $\pi_N$ satisfies, in the sense of distributions,
\begin{align*}
d\pi_N = \frac{1}{2} \Delta \pi_N \, dt - \frac{1}{N} \nabla \cdot \left( \sum_{i=1}^N \delta_{B_i} \, dB_i \right).
\end{align*}
Let $\bar{\rho}$ denote the unique solution to the diffusion equation
\begin{align}\label{diffusion-eq}
\partial_t \bar{\rho} = \Delta \bar{\rho}, \quad \bar{\rho}(0) = \rho_0,
\end{align}
which serves as the macroscopic limit of the empirical measure $\pi_N$ as $N \to \infty$. 

We define the fluctuation field by $\bar{\pi}^1_N := N^{1/2}(\pi_N - \bar{\rho})$, which represents the deviation of the empirical measure from its deterministic mean-field limit, rescaled by the central limit theorem scaling. Then $\bar{\pi}^1_N$ satisfies the SPDE
\begin{align}\label{pi1n}
d\bar{\pi}^1_N = \frac{1}{2} \Delta \bar{\pi}^1_N \, dt - \frac{1}{\sqrt{N}} \nabla \cdot \left( \sum_{i=1}^N \delta_{B_i} \, dB_i \right),
\end{align}
in the weak (distributional) sense. Let $\{S(t)\}_{t \in [0,T]}$ denote the standard heat semigroup on $\mathbb{T}^d$, with associated smooth transition kernel $p_t(x)$ satisfying $S(t)f(x) = \int_{\mathbb{T}^d} p_t(x - y) f(y) \, dy$ for every $f \in C^\infty(\mathbb{T}^d)$. 

Applying the mild formulation of \eqref{pi1n} and using integration by parts along with the properties of the heat kernel, we obtain that, almost surely, for every $t \in [0,T]$,
\begin{align}\label{mild-0}
\bar{\pi}^1_N(t) =&S(t)(N^{1/2}(\pi_N(0)-\rho_0)) -\frac{1}{\sqrt{N}} \int_0^t S(t-s) \nabla \cdot \left( \sum_{i=1}^N \delta_{B_i} \, dB_i \right)\notag\\ 
=&S(t)(N^{1/2}(\pi_N(0)-\rho_0)) -\frac{1}{\sqrt{N}} \sum_{i=1}^N \int_0^t \nabla_x p_{t-s}(x - B_i(s)) \, dB_i(s).
\end{align}

For any test function $\phi \in C^\infty(\mathbb{T}^d)$, we denote by $\bar{\pi}^{1,\phi}_N := \bar{\pi}^1_N(\phi)$ the action of the fluctuation field on $\phi$. Then, from the mild formulation \eqref{mild-0}, we deduce that, almost surely, for every $t \in [0,T]$,
\begin{align*}
\bar{\pi}^{1,\phi}_N(t) 
=&\langle S(t)(N^{1/2}(\pi_N(0)-\rho_0)),\phi\rangle+ \frac{1}{\sqrt{N}} \sum_{i=1}^N \int_0^t \left\langle \nabla \phi, p_{t-s}(\cdot - B_i(s)) \right\rangle \, dB_i(s)\notag\\
=&\langle N^{1/2}(\pi_N(0)-\rho_0),S(t)\phi\rangle+ \frac{1}{\sqrt{N}} \sum_{i=1}^N \int_0^t (S(t-s)\nabla \phi)(B_i(s)) \, dB_i(s).
\end{align*}

We are now in a position to state the main quantitative error estimate concerning the quadratic variation of the fluctuation field $\bar{\pi}^1_N$ and its convergence to the limiting mobility matrix.  
\begin{theorem}
	Let $N \in \mathbb{N}_+$, and let $(B_i)_{i=1}^N$, $(X_i)_{i=1}^N$, $\bar{\rho}$, $\pi_N$, and $\bar{\pi}^1_N$ be defined as above. For every test function $\phi \in C^\infty(\mathbb{T}^d)$, then there exists a constant $C = C(\phi, \rho_0) > 0$ such that for all $0 < h < t$,
\begin{align*}
\left| \frac{1}{h} \, \mathbb{E} \left( \bar{\pi}^{1,\phi}_N(t+h) - \bar{\pi}^{1,\phi}_N(t) \right)^2 - \left\langle \nabla \phi, \bar{\rho}(t) \nabla \phi \right\rangle \right| \leq C(\phi, \rho_0) \, h.
\end{align*}
\end{theorem}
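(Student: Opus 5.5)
The plan is to work directly from the mild representation of $\bar{\pi}^{1,\phi}_N$ derived just before the statement. I will split the increment into three pieces: an initial-data term, a "new noise" term living on $[t,t+h]$, and an "old noise" term on $[0,t]$. Most of the error will vanish by orthogonality. The main contribution will then be computed exactly by It\^o's isometry, using that each $B_i(s)$ has law $\bar{\rho}(s)\,dx$.

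\textbf{Step 1: decomposition.} Writing the mild formula at times $t+h$ and $t$ and subtracting gives $\bar{\pi}^{1,\phi}_N(t+h)-\bar{\pi}^{1,\phi}_N(t)=I_1+I_2+I_3$, where
\begin{align*}
I_1&=\big\langle N^{1/2}(\pi_N(0)-\rho_0),\,(S(t+h)-S(t))\phi\big\rangle,\\
I_2&=\frac{1}{\sqrt N}\sum_{i=1}^N\int_t^{t+h}(S(t+h-s)\nabla\phi)(B_i(s))\,dB_i(s),\\
I_3&=\frac{1}{\sqrt N}\sum_{i=1}^N\int_0^{t}\big((S(t+h-s)-S(t-s))\nabla\phi\big)(B_i(s))\,dB_i(s).
\end{align*}

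\textbf{Step 2: orthogonality.} $I_1$ is $\mathcal F_0$-measurable. $I_2$ and $I_3$ are stochastic integrals with deterministic, smooth integrands evaluated along the particles, so they have zero conditional mean given $\mathcal F_0$. This kills the cross terms $\mathbb E[I_1I_2]$ and $\mathbb E[I_1I_3]$. Since $I_2$ and $I_3$ are integrals over the disjoint time intervals $[t,t+h]$ and $[0,t]$, $\mathbb E[I_2I_3]=0$ as well. Hence
\[
\mathbb E(\cdots)^2=\mathbb E I_1^2+\mathbb E I_2^2+\mathbb E I_3^2 .
\]

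\textbf{Step 3: the error terms $I_1$ and $I_3$.}
\begin{itemize}
\item For $I_1$, apply \eqref{fluc-initial} with test function $(S(t+h)-S(t))\phi$. Together with $\|(S(t+h)-S(t))\phi\|_{L^\infty}\le \tfrac h2\|\Delta\phi\|_{L^\infty}$, this gives $\mathbb E I_1^2\lesssim h^2\|\Delta\phi\|_{L^\infty}^2$.
\item For $I_3$, use It\^o's isometry and the independence of the $B_i$ (no cross terms between different particles). Since $S(r)$ commutes with $\nabla$, the integrand obeys $\|(S(t+h-s)-S(t-s))\nabla\phi\|_{L^\infty}\le \tfrac h2\|\nabla\Delta\phi\|_{L^\infty}$, and therefore $\mathbb E I_3^2\le C t h^2\|\phi\|_{C^3}^2$.
\end{itemize}
After dividing by $h$, both contributions are $O(h)$, with a constant depending on $T$ and $\phi$.

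\textbf{Step 4: the main term $I_2$.} It\^o's isometry and independence give
\[
\tfrac1h\mathbb E I_2^2=\tfrac1h\int_t^{t+h}\mathbb E\,\big|S(t+h-s)\nabla\phi\big|^2(B_1(s))\,ds .
\]
The key identification is that $B_1(s)$ has law $\bar{\rho}(s)\,dx$, since $\bar{\rho}$ solves the Fokker--Planck equation for Brownian motion started from density $\rho_0$. I will use the normalization consistent with the generator $\frac12\Delta$ appearing in \eqref{pi1n}; the factor in \eqref{diffusion-eq} should be read accordingly. This yields
\[
\tfrac1h\mathbb E I_2^2=\tfrac1h\int_t^{t+h}\big\langle\bar{\rho}(s),|S(t+h-s)\nabla\phi|^2\big\rangle\,ds .
\]
It remains to compare this with $\langle\bar{\rho}(t),|\nabla\phi|^2\rangle$, using two estimates:
\begin{itemize}
\item $\big\||S(r)\nabla\phi|^2-|\nabla\phi|^2\big\|_{L^\infty}\le Cr\|\phi\|_{C^3}^2$;
\item $\|\bar{\rho}(s)-\bar{\rho}(t)\|_{L^1}\le (s-t)\sup_{r}\|\Delta\bar{\rho}(r)\|_{L^1}\le C(\rho_0)(s-t)$, which holds because $\rho_0$ is smooth.
\end{itemize}
Together with $\|\bar{\rho}(s)\|_{L^1}=1$, the two estimates bound the integrand's deviation by $O(h)$ uniformly for $s\in[t,t+h]$, so the time average also deviates from $\langle\bar{\rho}(t),|\nabla\phi|^2\rangle$ by $O(h)$.

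Combining Steps 2--4 gives the claimed bound $C(\phi,\rho_0)\,h$. I expect the main obstacle to be Step 2 together with the law identification in Step 4. Everything hinges on the cross terms vanishing exactly: initial data versus noise, and noise on disjoint intervals. It also requires the particles to be independent, so that the $N$ diagonal contributions average to $\langle\bar{\rho},\cdot\rangle$ without any $N$-dependent error. This is why the final bound is uniform in $N$.
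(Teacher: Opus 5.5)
Your proposal is correct and follows the paper's proof essentially step for step: the same three-way split of the increment from the mild formulation, vanishing cross terms via conditioning on $\mathcal{F}_0$ and the disjointness of $[0,t]$ and $[t,t+h]$, It\^o's isometry together with $\mathbb{E}\pi_N(s)=\bar{\rho}(s)$, and $O(h^2)$ bounds for three pieces (the initial-data term, the $[0,t]$ term, and the deviation of the $[t,t+h]$ term from $h\langle |\nabla\phi|^2,\bar{\rho}(t)\rangle$). Your remark that $\bar{\rho}$ must solve $\partial_t\bar{\rho}=\frac12\Delta\bar{\rho}$ (rather than $\partial_t\bar{\rho}=\Delta\bar{\rho}$ as written in \eqref{diffusion-eq}) for this identification is right, and it is what the paper implicitly uses.
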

\begin{proof}
For every $0 < h < t$, we analyze the increment of the fluctuation field $\bar{\pi}^{1,\phi}_N$ using its mild representation \eqref{mild-0}. We begin by decomposing the time integral in the mild form of $\bar{\pi}^1_N(t+h)$ into two intervals: from $0$ to $t$, and from $t$ to $t+h$. By expanding the square of the difference and exploiting the independence of the Brownian motions, we obtain 
\begin{align*}
\mathbb{E} \left( \bar{\pi}_N^{1,\phi}(t+h) - \bar{\pi}_N^{1,\phi}(t) \right)^2 
=  \mathbb{E} \Bigg( &\big\langle N^{1/2}(\pi_N(0) - \rho_0), (S(t+h) - S(t))\phi \big\rangle \\
&+ \frac{1}{\sqrt{N}}\sum_{i=1}^N \Bigg( \int_0^t \left( S(t+h-s) \nabla \phi \right)(B_i) \, dB_i 
- \int_0^t \left( S(t-s) \nabla \phi \right)(B_i) \, dB_i \\
&\quad + \int_t^{t+h} \left( S(t+h-s) \nabla \phi \right)(B_i) \, dB_i \Bigg) \Bigg)^2\\
=&\mathbb{E}\Big(\big\langle N^{1/2}(\pi_N(0) - \rho_0), (S(t+h) - S(t))\phi \big\rangle\Big)^2\\
&+\frac{1}{N}\sum_{i=1}^N\mathbb{E}\left(\int^t_0([S(t+h-s)-S(t-s)]\nabla\phi)(B_i)\, dB_i\right)^2\\
&+\frac{1}{N}\sum_{i=1}^N\mathbb{E}\left(\int_t^{t+h} \left( S(t+h-s) \nabla \phi \right)(B_i) \, dB_i\right)^2. 
\end{align*} 
In the final equality above, we have employed the independence of the Brownian motions $(B_i)_{i=1}^N$ and the fact that the covariance between the stochastic integrals over the intervals $[0,t]$ and $[t, t+h]$ vanishes. Furthermore, to analyze the covariance between the initial data and the stochastic integrals, we utilized the properties of conditional expectation with respect to the filtration $\mathcal{F}_0$, from which it follows that these covariances are zero. By applying It\^{o} isometry, the expression reduces to 
\begin{align*}
\mathbb{E} \left( \bar{\pi}^{1,\phi}_N(t+h) - \bar{\pi}^{1,\phi}_N(t) \right)^2 
=& \mathbb{E}\Big(\big\langle N^{1/2}(\pi_N(0) - \rho_0), (S(t+h) - S(t))\phi \big\rangle\Big)^2\\
&+\frac{1}{N} \sum_{i=1}^N \mathbb{E} \int_0^t \left| \left(S(t+h-s)\nabla\phi\right)(B_i) - \left(S(t-s)\nabla\phi\right)(B_i) \right|^2 ds \\
&+ \frac{1}{N} \sum_{i=1}^N \mathbb{E} \int_t^{t+h} \left| \left(S(t+h-s)\nabla\phi\right)(B_i) \right|^2 ds.
\end{align*}
Rewriting this in terms of the empirical measure and taking expectations yields
\begin{align*}
\mathbb{E} \left( \bar{\pi}^{1,\phi}_N(t+h) - \bar{\pi}^{1,\phi}_N(t) \right)^2 
=&\mathbb{E}\Big(\big\langle N^{1/2}(\pi_N(0) - \rho_0), (S(t+h) - S(t))\phi \big\rangle\Big)^2\\
&+ \mathbb{E} \int_0^t \left\langle \left| S(t+h-s)\nabla\phi - S(t-s)\nabla\phi \right|^2, \pi_N(s) \right\rangle ds \\
&+ \mathbb{E} \int_t^{t+h} \left\langle \left| S(t+h-s)\nabla\phi \right|^2, \pi_N(s) \right\rangle ds.
\end{align*}
Using the fact that $\mathbb{E}\pi_N(t, \phi) = \langle \bar{\rho}(t), \phi \rangle$ holds for every test function $\phi \in C^{\infty}(\mathbb{T}^d)$ and all $t \in [0, T]$, we are therefore justified in replacing $\pi_N$ by $\bar{\rho}$ in expectation. Consequently, we obtain 
\begin{align*}
\mathbb{E} \left( \bar{\pi}^{1,\phi}_N(t+h) - \bar{\pi}^{1,\phi}_N(t) \right)^2 
=&\mathbb{E}\Big(\big\langle N^{1/2}(\pi_N(0) - \rho_0), (S(t+h) - S(t))\phi \big\rangle\Big)^2\\
&+\int_0^t \left\langle \left| S(t+h-s)\nabla\phi - S(t-s)\nabla\phi \right|^2, \bar{\rho}(s) \right\rangle ds \\
&+ \int_t^{t+h} \left\langle \left| S(t+h-s)\nabla\phi \right|^2, \bar{\rho}(s) \right\rangle ds.
\end{align*}
Exploiting the regularity and continuity properties of both the semigroup $S(t)$ and the macroscopic density profile $\bar{\rho}$, we estimate the first integral via a Taylor expansion in time and control the second using the temporal continuity of $\bar{\rho}(s)$. This yields the existence of a constant $C = C(\phi, \rho_0)$, depending only on the test function $\phi$ and the initial profile $\rho_0$, such that
\begin{align*}
	\int_0^t \left\langle \left| S(t+h-s)\nabla\phi - S(t-s)\nabla\phi \right|^2, \bar{\rho}(s) \right\rangle ds\leq C(\phi, \rho_0) h^2.
\end{align*}
As a consequence, we obtain the following bound on the quadratic variation:
\begin{align*}
\mathbb{E} \left( \bar{\pi}^{1,\phi}_N(t+h) - \bar{\pi}^{1,\phi}_N(t) \right)^2 
&\leq h \left\langle |\nabla\phi|^2, \bar{\rho}(t) \right\rangle +\mathbb{E}\Big(N^{1/2}\big\langle \pi_N(0) - \rho_0, (S(t+h) - S(t))\phi \big\rangle\Big)^2+ C(\phi, \rho_0) h^2 \\
&\quad + \int_t^{t+h} \left( \left\langle \left| S(t+h-s)\nabla\phi \right|^2, \bar{\rho}(s) \right\rangle 
- \left\langle |\nabla\phi|^2, \bar{\rho}(t) \right\rangle \right) ds.
\end{align*}
The last integral accounts for the local error arising from the time dependence of $\bar{\rho}(s)$ in a neighborhood of $s = t$, and we further decompose it as follows:
\begin{align*}
	\int^{t+h}_t\langle|S(t+h-s)\nabla\phi|^2,\bar{\rho}(s)\rangle-\langle|\nabla\phi|^2,\bar{\rho}(t)\rangle ds
	=&\int^{t+h}_t\langle|S(t+h-s)\nabla\phi|^2-|\nabla\phi|^2,\bar{\rho}(s)\rangle ds \\
	&+\int^{t+h}_t\langle|\nabla\phi|^2,(\bar{\rho}(s)-\bar{\rho}(t))\rangle ds \\
	\leq& C(\phi,\rho_0)h^2.
\end{align*}
Here, the second term on the right-hand side is controlled using the smoothness of $\bar{\rho}$. 

As for the initial data term, with the help of \eqref{fluc-initial}, there exists a constant $C>0$ such that
\begin{align*}
	\mathbb{E}\Big(N^{1/2}\big\langle \pi_N(0) - \rho_0, (S(t+h) - S(t))\phi \big\rangle\Big)^2 
	&\lesssim C\|(S(t+h) - S(t))\phi\|_{L^{\infty}(\mathbb{T}^d)}^2 \\
	&\leq C(\phi) \, h^2.
\end{align*}
This yields the estimate
\begin{align*}
\left| \frac{1}{h} \, \mathbb{E} \left( \bar{\pi}^{1,\phi}_N(t+h) - \bar{\pi}^{1,\phi}_N(t) \right)^2 - \left\langle \nabla \phi, \bar{\rho}(t) \nabla \phi \right\rangle \right| \leq C(\phi, \rho_0) \, h.
\end{align*}

\end{proof}

\section{From the SSEP to the mobility matrix}\label{sec-4}
Recall that $\rho_0 \in C^{\infty}(\mathbb{T}^d;[0,1])$, and let $(\eta_t)_{t\in[0,T]}$ denote the configuration of the SSEP with initial data as specified in Definition \ref{def-initialdata}, associated with $\rho_0$. Let $\bar{\rho}$ denote the unique solution to the diffusion equation \eqref{diffusion-eq} with initial data $\rho_0$. Furthermore, for each $N \in \mathbb{N}_+$, we recall that $\pi^N$ and $\bar{X}^{1,N}$ denote the empirical measure and the fluctuation field of the SSEP, respectively. 

In this section, we establish an error estimate between the quadratic variation of the fluctuation field and the corresponding mobility matrix. We begin by introducing Duhamel's formula and a characterization of the quadratic variation of the martingale term. 

Recall that $\{S_N(t)\}_{t \in [0,T]}$ is the semigroup defined by \eqref{semi-group-SN}. By Dynkin's formulas \eqref{Dynkin-1}, and upon applying the Duhamel's formula as shown in Lemma \ref{Duhamel-formula}, we obtain that almost surely, for every $\phi \in C^{\infty}(\mathbb{T}^d)$ and every $t\in[0,T]$, 
\begin{align}\label{mild-form-ips}
\langle \pi^N_t, \phi \rangle = \langle  \pi^N_0, S_N(t)\phi \rangle + \int_0^t \langle S_N(t - s) \, \phi,  dM^N_s \rangle,
\end{align}
where $\{M^N_t\}_{t \in [0,T]}$ is a martingale with respect to the filtration $\{\mathcal{F}_t\}_{t \in [0,T]} = \{\sigma(\eta_s : s \leq t)\}_{t \in [0,T]}$.

In the following, we consider the averaged empirical measure 
\begin{align}\label{discrete-heateq}
\rho_N(t) = \mathbb{E} \pi^N(t) = N^{-d} \sum_{x \in \mathbb{T}^d_N} \mathbb{E} \eta_t(x) \, \delta_{x}(dy), \quad t \in [0,T].
\end{align}
Then $\rho_N$ satisfies the discrete diffusion equation
\begin{align}\label{disc-diffusion-eq}
\partial_t \rho_N = \mathcal{L}_N \rho_N, \qquad \rho_N(0) = \mathbb{E} \pi^N_0.
\end{align}
We remark that the generator $\mathcal{L}_N$ is originally defined as an operator acting on functions of configurations. However, with a slight abuse of notation, we extend the definition of $\mathcal{L}_N$ to act on $\rho_N$ by employing the expression \eqref{L-N}. This expression coincides with the discrete Laplace operator and therefore can be applied to any suitably regular function.

To analyze the error estimate, we decompose the fluctuation field into two parts:
\begin{align*}
\bar{X}^{1,N}(t, dy) = \bar{X}^{1,N}_1(t, dy) + \bar{X}^{1,N}_2(t, dy),
\end{align*}
where
\begin{align}
\bar{X}^{1,N}_1(t, dy) &= N^{-d/2} \sum_{x \in \mathbb{T}^d_N} \big( \eta_t(x) - \mathbb{E}\eta_t(x) \big) \, \delta_{x}(dy), \label{X11}\\
\bar{X}^{1,N}_2(t, dy) &= N^{-d/2} \sum_{x \in \mathbb{T}^d_N} \big( \mathbb{E}\eta_t(x) - \bar{\rho}(x, t) \big) \, \delta_{x}(dy).\label{X12}
\end{align}
A straightforward computation shows that, almost surely, for every $\phi \in C^{\infty}(\mathbb{T}^d)$ and $t \in [0,T]$,
\begin{align*}
\bar{X}^{1,N}_1(t, \phi) = \langle N^{d/2}S_N(t)(\pi^N_0 - \mathbb{E} \pi^N_0), \phi \rangle + N^{d/2} \int_0^t \langle \phi, S_N(t - s) \, dM^N_s \rangle.
\end{align*}

We next present a basic characterization of the quadratic variation of the martingale term introduced above.

\begin{lemma}\label{lem-carre-du-champ}
Let $\phi \in C^{\infty}([0,T] \times \mathbb{T}^d)$ be a time-dependent test function. Then
\begin{align}\label{carre-du-champ}
\mathbb{E} \left( N^{d/2} \int_0^t \langle \phi(s), \, dM^N_s \rangle \right)^2 
= \mathbb{E} \left( \int_0^t \mathcal{L}_N \big( \bar{X}^{1,N}_1(s, \phi(s))^2 \big) 
- 2 \, \bar{X}^{1,N}_1(s, \phi(s)) \, \mathcal{L}_N \bar{X}^{1,N}_1(s, \phi(s)) \, ds \right).
\end{align}
\end{lemma}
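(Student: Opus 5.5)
The plan is to derive the identity from the standard carr\'e-du-champ characterization of the predictable quadratic variation of Dynkin martingales for a finite-state Markov chain, applied to the time-dependent functional $F(s,\eta):=N^{d/2}\langle \pi^N(\eta),\phi(s)\rangle$. Set
\[
\mathcal{M}_t:=N^{d/2}\int_0^t\langle \phi(s),dM^N_s\rangle .
\]
Since $\Omega_N$ is finite and $\phi$ is smooth in time, $\mathcal{M}_t$ is the martingale from Dynkin's formula for the time-dependent function $F$:
\[
\mathcal{M}_t=F(t,\eta_t)-F(0,\eta_0)-\int_0^t(\partial_s+\mathcal{L}_N)F(s,\eta_s)\,ds .
\]
This follows by integrating by parts in \eqref{Dynkin-1}, because $M^N$ is linear in $\eta$ and of finite variation.

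First, I would apply Dynkin's formula to $F^2$ and use It\^o's product rule for the semimartingale $F(t,\eta_t)$. This gives
\[
\mathcal{M}_t^2-\int_0^t\Gamma_N(F(s,\cdot))(\eta_s)\,ds \quad\text{is a martingale},
\]
where $\Gamma_N(f)=\mathcal{L}_Nf^2-2f\mathcal{L}_Nf$. The $\partial_s$ terms cancel in this expression, since $\partial_s(F^2)=2F\partial_sF$. Concretely, $\Gamma_N(f)(\eta)$ is the jump-rate-weighted sum of squared jumps
\[
N^2\sum_{x,z}\eta(x)(1-\eta(x+z))p(z)\,\big(f(\eta^{x,x+z})-f(\eta)\big)^2 ,
\]
and $\langle\mathcal{M}\rangle_t=\int_0^t\Gamma_N(F(s))(\eta_s)\,ds$. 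Taking expectations and using $\mathcal{M}_0=0$ yields $\mathbb{E}\mathcal{M}_t^2=\mathbb{E}\int_0^t\Gamma_N(F(s))(\eta_s)\,ds$.

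Second, I would replace $F$ by $\bar{X}^{1,N}_1(s,\phi(s))$. The two differ by
\[
F(s,\eta)-\bar{X}^{1,N}_1(s,\phi(s))=N^{-d/2}\sum_x\mathbb{E}\eta_s(x)\phi(s,x),
\]
which is a deterministic function of $s$ and is constant in $\eta$. The operator $\Gamma_N$ is invariant under adding constants, since $\mathcal{L}_N$ annihilates constants and $\Gamma_N(f+c)=\Gamma_N(f)$. Therefore $\Gamma_N(F(s))=\Gamma_N(\bar{X}^{1,N}_1(s,\phi(s)))$, which is exactly the integrand in \eqref{carre-du-champ}.

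The only delicate point is justifying the martingale property and the time-dependent Dynkin formula. Here one uses that the state space is finite, that the total jump rate is at most $dN^{d+2}$, and that $\phi\in C^1$ in time, so every term is bounded and no localization is needed. I expect this to be routine rather than a real obstacle. The main care needed is the bookkeeping showing that the time-derivative contributions cancel in the quadratic variation.
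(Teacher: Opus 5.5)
Your proposal is correct and follows the same route as the paper, whose proof simply cites the standard carr\'e-du-champ characterization of Dynkin martingales for time-dependent functionals \cite[Appendix~1.5, Lemma~5.1]{KL99}. You reproduce that lemma, and you also make explicit two steps the paper leaves implicit: the identification of $N^{d/2}\int_0^t\langle\phi(s),dM^N_s\rangle$ with the time-dependent Dynkin martingale of $F$, and the invariance $\Gamma_N(f+c)=\Gamma_N(f)$, which absorbs the deterministic centering in $\bar{X}^{1,N}_1$.
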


\begin{proof}
The proof can be found in~\cite[Appendix~1.5, page~330, Lemma~5.1]{KL99}.
\end{proof}

\begin{remark}
The operator appearing on the right-hand side of \eqref{carre-du-champ} is commonly referred to as the \emph{carr\'e du champ} operator. More precisely, for each $N\in\mathbb{N}_+$, the \emph{carr\'e du champ} operator is defined by
\begin{align}\label{def-carre-du-champ}
\Gamma_N f(\eta) := (\mathcal{L}_N f^2)(\eta) - 2 f(\eta) \mathcal{L}_N f(\eta), \quad \text{for any } \eta \in \Omega_N,
\end{align}
where $f\colon \Omega_N \to \mathbb{R}$ denotes a suitable test function. In the following, we provide a characterization of the carr\'e du champ operator that will be instrumental in our analysis.
\end{remark}

\begin{lemma}
For every $N\in\mathbb{N}_+$, let $\Gamma_N$ be the carr\'e du champ operator defined by \eqref{def-carre-du-champ}. Then we have the following representation: 
\begin{align}\label{carre-property}
\Gamma_N f(\eta)=N^2 \sum_{x \in \mathbb{T}^d_N} \sum_{z \in \mathbb{T}^d_N} \eta(x)(1 - \eta(x+z)) p(z)(f(\eta^{x,x+z})-f(\eta))^2, \quad \text{for any } \eta \in \Omega_N,
\end{align}
where $f\colon \Omega_N \to \mathbb{R}$ denotes a suitable test function. 
\end{lemma}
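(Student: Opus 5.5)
The identity follows by expanding both terms in the definition \eqref{def-carre-du-champ} with the explicit form of $\mathcal{L}_N$ and regrouping. Write $c_{x,z}(\eta):=N^2\eta(x)(1-\eta(x+z))p(z)$, so that $\mathcal{L}_N g(\eta)=\sum_{x,z}c_{x,z}(\eta)\,[g(\eta^{x,x+z})-g(\eta)]$ for any $g\colon\Omega_N\to\mathbb{R}$. The rates $c_{x,z}(\eta)$ depend only on $\eta$ and not on the function acted upon.

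First apply this with $g=f^2$, which gives $\mathcal{L}_N f^2(\eta)=\sum_{x,z}c_{x,z}(\eta)[f(\eta^{x,x+z})^2-f(\eta)^2]$. Next, $2f(\eta)\mathcal{L}_Nf(\eta)=\sum_{x,z}c_{x,z}(\eta)[2f(\eta)f(\eta^{x,x+z})-2f(\eta)^2]$, where the factor $f(\eta)$ is pulled inside the sum because it does not depend on $(x,z)$. Subtracting term by term, each summand carries the factor
\begin{align*}
f(\eta^{x,x+z})^2-f(\eta)^2-2f(\eta)f(\eta^{x,x+z})+2f(\eta)^2=\big(f(\eta^{x,x+z})-f(\eta)\big)^2,
\end{align*}
and summing against $c_{x,z}(\eta)$ yields \eqref{carre-property}.

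The argument involves no genuine obstacle. Since $\Omega_N$ is finite, all sums are finite and any real-valued $f$ is admissible, so no integrability or domain issue arises. The only point to check is that the same rate $c_{x,z}(\eta)$ multiplies both differences, which holds because the jump rates are evaluated at the current configuration $\eta$ and not at the swapped one. If one prefers, this computation can instead be read off from the general identity $\Gamma f=\sum_{\eta'}q(\eta,\eta')(f(\eta')-f(\eta))^2$, valid for any finite-state Markov jump generator with rates $q$.
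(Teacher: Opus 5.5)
Your proposal is correct and matches the paper's proof: both expand $\mathcal{L}_N f^2$ and $2f\mathcal{L}_N f$ with the explicit SSEP rates and combine the summands into $(f(\eta^{x,x+z})-f(\eta))^2$. Your remarks on finiteness of $\Omega_N$ and the general finite-state jump-process identity are fine additions.
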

\begin{proof}
Let $f:\Omega_N\rightarrow\mathbb{R}$ be a test function. By the definition of the generator, it follows that 
\begin{align*}
	\Gamma_N f(\eta) =& (\mathcal{L}_N f^2)(\eta) - 2 f(\eta) \mathcal{L}_N f(\eta)\\
	=&N^2 \sum_{x \in \mathbb{T}^d_N} \sum_{z \in \mathbb{T}^d_N} \eta(x)(1 - \eta(x+z)) p(z)(f(\eta^{x,x+z})^2-f(\eta)^2)\\
	&-2f(\eta)N^2 \sum_{x \in \mathbb{T}^d_N} \sum_{z \in \mathbb{T}^d_N} \eta(x)(1 - \eta(x+z)) p(z)(f(\eta^{x,x+z})-f(\eta))\\
	=&N^2 \sum_{x \in \mathbb{T}^d_N} \sum_{z \in \mathbb{T}^d_N} \eta(x)(1 - \eta(x+z)) p(z)(f(\eta^{x,x+z})-f(\eta))^2, \quad \text{for any } \eta \in \Omega_N. 
	\end{align*}
	This completes the proof. 
\end{proof}

As a preparation, we present a numerical error estimate between the discrete and continuous diffusion equations. Let $N \in \mathbb{N}_+$, and recall that $\rho_N$ denotes the solution to the discrete diffusion equation \eqref{disc-diffusion-eq} with initial condition $\rho_N(0) = \mathbb{E}\pi_N(0) = \sum_{x' \in \mathbb{T}^d_N} \mathbb{E}\eta_0(x') \delta_{x'}$. Under the assumption of slowly varying initial data, for every $x \in \mathbb{T}^d_N$ we have 
\begin{align*}
	\rho_N(0,x) = \mathbb{E}\pi_N(0,x) 
	= \sum_{x' \in \mathbb{T}^d_N} \rho_0(x') \delta_{x'}(x)
	= \rho_0(x).
\end{align*}
We now establish the following numerical error estimate.
\begin{lemma}
	For every $N \in \mathbb{N}_+$, it holds that 
	\begin{align}\label{disc-lap-error}
		\sup_{s \in [0,T],\, y \in \mathbb{T}^d_N} 
		|\rho_N(s,y) - \bar{\rho}(s,y)| 
		\leq C(\rho_0) N^{-2},
	\end{align}
	and 
	\begin{align}\label{disc-lap-error-2}
	N^d \sup_{s \in [0,T],\, y \in \mathbb{T}^d_N}\Big|\mathcal{L}_N \rho_N - \frac{1}{2}\Delta \bar{\rho}\Big|^2\leq C(\rho_0)N^{d-4}. 
	\end{align}

\end{lemma}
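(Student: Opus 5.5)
The plan is a standard consistency-plus-stability argument for the semi-discrete heat equation, carried out in the supremum norm. Write $\bar{\rho}_N(s,y):=\bar{\rho}(s,y)$ for the restriction of the smooth solution of \eqref{heat-equation} to the grid $\mathbb{T}^d_N$, and set $e_N := \rho_N - \bar{\rho}_N$ on $[0,T]\times\mathbb{T}^d_N$. By the computation preceding the lemma, $e_N(0)=0$ because the initial data are slowly varying. Using \eqref{disc-diffusion-eq} together with the identification of $\mathcal{L}_N$ with $\frac12\Delta_N$ from \eqref{L-N}, we obtain
\begin{align*}
\partial_t e_N = \tfrac12 \Delta_N e_N + \tau_N, \qquad \tau_N := \tfrac12\big(\Delta_N \bar{\rho} - \Delta \bar{\rho}\big)\big|_{\mathbb{T}^d_N}.
\end{align*}

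\textbf{Step 1 (consistency).} A fourth-order Taylor expansion of $\bar\rho(s,\cdot)$ in each coordinate direction shows that the odd terms cancel by symmetry of the second difference. This leaves
\begin{align*}
|\tau_N(s,y)| \le \tfrac{1}{24} N^{-2} \sum_j \|\partial_j^4\bar\rho(s)\|_{L^\infty} \le C(\rho_0) N^{-2}.
\end{align*}
The bound on $\partial^4\bar\rho$ is uniform in $s\in[0,T]$ because $\|\partial^\alpha \bar\rho(s)\|_{L^\infty}\le \|\partial^\alpha\rho_0\|_{L^\infty}$ for the heat semigroup and $\rho_0\in C^\infty$.

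\textbf{Step 2 (stability).} Duhamel's formula for the discrete semigroup $S_N$, whose generator is $\frac12\Delta_N$, gives $e_N(t)=\int_0^t S_N(t-s)\tau_N(s)\,ds$. Since $S_N$ is a Markov semigroup given by convolution with the probability kernel $G_N$, it is an $L^\infty$-contraction. Hence
\begin{align*}
\sup_{t\le T}\|e_N(t)\|_{\infty}\le T\, C(\rho_0)N^{-2},
\end{align*}
which is \eqref{disc-lap-error}. Alternatively, one can argue with a discrete maximum principle at the maximizing site.

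\textbf{Step 3 (estimate \eqref{disc-lap-error-2}).} Split
\begin{align*}
\mathcal{L}_N\rho_N-\tfrac12\Delta\bar\rho = \tfrac12\Delta_N e_N + \tau_N .
\end{align*}
The term $\tau_N$ is $O(N^{-2})$ by Step 1. For $\Delta_N e_N$, using \eqref{disc-lap-error} directly would cost a factor $N^2$, so instead we differentiate the error equation. The discrete Laplacian commutes with $\Delta_N$ and $S_N$, so $\Delta_N e_N(t)=\int_0^t S_N(t-s)\Delta_N\tau_N(s)\,ds$. Next, $\Delta_N\tau_N = \tfrac12(\Delta_N\Delta_N\bar\rho - \Delta_N\Delta\bar\rho)$, which by Step 1 applied to $\Delta\bar\rho$ (also smooth with bounded derivatives) is again $O(N^{-2})$. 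This uses only that $\Delta_N$ applied to a smooth function is bounded by its second derivatives. Therefore
\begin{align*}
\sup|\mathcal{L}_N\rho_N-\tfrac12\Delta\bar\rho|\le C(\rho_0)N^{-2}.
\end{align*}
Squaring and multiplying by $N^d$ gives $C(\rho_0)N^{d-4}$.

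The main obstacle is Step 3: the $O(N^{-2})$ rate for the discrete Laplacian of the error does not follow from \eqref{disc-lap-error} by itself. It has to be obtained by commuting $\Delta_N$ through the discrete Duhamel formula and applying consistency to $\Delta\bar\rho$ using higher regularity of $\rho_0$ (sixth derivatives). I would do this first, being careful that all constants depend only on finitely many $C^k$ norms of $\rho_0$ and on $T$.
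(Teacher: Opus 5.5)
Your proposal is correct and follows the paper's proof. Both use a fourth-order Taylor consistency bound for $\tau_N=\mathcal{L}_N\bar\rho-\tfrac12\Delta\bar\rho$, then Duhamel's formula with $e_N(0)=0$ and the $L^\infty$-contractivity of $S_N$; your splitting $\mathcal{L}_N\rho_N-\tfrac12\Delta\bar\rho=\tfrac12\Delta_N e_N+\tau_N$, with $\Delta_N e_N(t)=\int_0^t S_N(t-s)\Delta_N\tau_N(s)\,ds$, simply spells out the ``same trick'' the paper invokes for the second estimate (the only wording fix: the $O(N^{-2})$ bound on $\Delta_N\tau_N$ comes from commuting $\Delta_N$ with derivatives and applying Step 1 to each $\partial_i\partial_j\bar\rho$, hence the $C^6$ norm of $\rho_0$, not from applying it to $\Delta\bar\rho$ alone).
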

\begin{proof}
	Let $e_N = \rho_N - \bar{\rho}$. Taking the difference of the discrete and continuous equations yields 
	\begin{align*}
		\partial_t e_N = \mathcal{L}_N e_N + R_N,
	\end{align*}
	where $R_N = \mathcal{L}_N \bar{\rho} - \tfrac{1}{2}\Delta \bar{\rho}$. By the definition of $\mathcal{L}_N$ and applying a fourth order Taylor expansion of $\bar{\rho}$, we obtain 
	\begin{align*}
		\sup_{x \in \mathbb{T}^d_N} |R_N(x)|
		\leq \frac{1}{24 N^{2}}
		\sum_{j=1}^d \|\partial_{x_j}^4 \bar{\rho}\|_{L^{\infty}(\mathbb{T}^d)}.
	\end{align*}
	By applying Duhamel's formula, we complete the proof of \eqref{disc-lap-error}. Applying the same trick to $\mathcal{L}_N\rho_N-\frac{1}{2}\Delta\bar{\rho}$, we conclude the proof of \eqref{disc-lap-error-2} similarly. 
\end{proof}

In the following, we establish an error estimate comparing the jump rates of the SSEP with their corresponding expressions derived from the hydrodynamic limit. 

\begin{lemma}\label{error-rate}
Let $\rho_0 \in C^{\infty}(\mathbb{T}^d;[0,1])$. Consider the SSEP $(\eta_t)_{t \in [0,T]}$ with initial distribution associated with the profile $\rho_0$. For every $N\in\mathbb{N}_+$, let $\bar{\rho}$ denote the solution of \eqref{heat-equation} with initial datum $\rho_0$ and let $\rho_N$ be defined by \eqref{discrete-heateq}. Then for $t_1\in(0,T]$, the following estimate holds:
\begin{align*}
\sup_{s \in [t_1,T],\ x,z \in \mathbb{T}^d_N}\Big| \mathbb{E} \big( \eta_s(x)(1 - \eta_s(x+z)) \big)& - \bar{\rho}(s,x) \big( 1 - \bar{\rho}(s,x+z) \big) \Big|\\ 
\lesssim&\quad  N^{-2}+N^{-2}I_{\{d=1\}}+\frac{\log(1+N^2)}{1+N^2}I_{\{d=2\}}+\frac{1}{1+N^2}I_{\{d\ge3\}}, 
\end{align*}
for every $N \in \mathbb{N}_+$. 	
\end{lemma}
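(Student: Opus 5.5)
Fix $s\in[t_1,T]$ and $x,z\in\mathbb{T}^d_N$; for the exclusion term only $z$ with $p(z)>0$ matters, so $x\ne x+z$ and $z$ is a nearest-neighbour step. Write $\rho_N(s,\cdot)=\mathbb{E}\eta_s(\cdot)$ as in \eqref{discrete-heateq}. Expanding,
\begin{align*}
\mathbb{E}\big(\eta_s(x)(1-\eta_s(x+z))\big)-\bar\rho(s,x)(1-\bar\rho(s,x+z))
=\;&\big[\rho_N(s,x)-\bar\rho(s,x)\big]-\big[\rho_N(s,x)\rho_N(s,x+z)-\bar\rho(s,x)\bar\rho(s,x+z)\big]\\
&-\mathrm{Cov}\big(\eta_s(x),\eta_s(x+z)\big).
\end{align*}
The first two brackets are handled directly by \eqref{disc-lap-error}. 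Since $0\le\rho_N,\bar\rho\le1$, each bracket is bounded by a constant multiple of $\sup|\rho_N-\bar\rho|\le C(\rho_0)N^{-2}$. This produces the $N^{-2}$ term common to all dimensions. What remains is a uniform bound on the two-point correlation $\varphi_s(x,y):=\mathbb{E}[\eta_s(x)\eta_s(y)]-\rho_N(s,x)\rho_N(s,y)$ for $x\neq y$.

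To bound $\varphi_s$, I would use the self-duality of the SSEP with two dual particles, i.e. \eqref{self-duality} with $\kappa=2$ and $\xi=\delta_x+\delta_y$. This gives $\mathbb{E}[\eta_s(x)\eta_s(y)]$ as an expectation over two exclusion walkers started from $\{x,y\}$. Equivalently, one can compute $\mathcal{L}_N(\eta(x)\eta(y))$ directly. Either route shows that $\varphi_s$ solves a closed discrete equation
\begin{align*}
\partial_s\varphi_s=\mathcal{A}_N\varphi_s+g_s,\qquad \varphi_0\equiv0 \ \text{off the diagonal},
\end{align*}
where $\mathcal{A}_N$ is the generator of two (symmetrized) exclusion walkers. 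The initial value vanishes because the initial law is product Bernoulli. The source $g_s$ is supported on nearest-neighbour pairs $|x-y|=1/N$ and equals
\begin{align*}
g_s(x,y)=-\frac{N^2}{2d}\big(\rho_N(s,x)-\rho_N(s,y)\big)^2 .
\end{align*}
This is the standard correlation equation, as in \cite{KL99} or Ferrari--Presutti--Vares. By smoothness of $\rho_N$, which is inherited from $\bar\rho$ via \eqref{disc-lap-error}, or more directly from discrete gradient bounds for $S_N(t)$ acting on smooth $\rho_0$, we get $|g_s|\lesssim N^2\cdot N^{-2}=O(1)$ on the diagonal neighbours.

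By Duhamel's formula and the positivity of the semigroup of $\mathcal{A}_N$,
\begin{align*}
|\varphi_s(x,y)|\lesssim \int_0^s \mathbb{P}_{(x,y)}\big(|X_r-Y_r|=1/N\big)\,dr .
\end{align*}
Here $(X_r,Y_r)$ are the two dual walkers run at speed $N^2$. The difference process is a random walk on $\mathbb{T}^d_N$ with modified rates only at distance one. Comparing it with the free walk, the time spent at a given nearest-neighbour site of the origin is controlled by $\int_0^s p^N_r(e)\,dr$, with $p^N_r$ the heat kernel of the continuous-time walk sped up by $N^2$, on $N^d$ sites.

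Using the local bound $p^N_r(e)\lesssim \min(1,(N^2r)^{-d/2})+N^{-d}$, the integral over $r\in[0,s]$ is of order $N^{-2}\int_0^{N^2 s}\min(1,u^{-d/2})\,du$ plus $sN^{-d}$. This gives $N^{-2}\cdot N$ in $d=1$, which is too weak. In $d=1$ I would therefore instead use the smallness of $g$ together with one-dimensional local-time arguments, or the exact one-dimensional formula, to recover $N^{-2}$ as stated. For $d=2$ the same computation gives $\log(1+N^2)/(1+N^2)$, and for $d\ge3$ it gives $1/(1+N^2)$. Together these match the claimed dimension-dependent rates.

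\textbf{Main obstacle.} The main difficulty is this last step. First, the interaction at distance one has to be handled by a comparison or positivity argument, for instance through negative correlation and the known bound of Liggett type. Second, the one-dimensional case needs a sharper argument than the crude heat-kernel occupation bound, since that bound only gives $N^{-1}$ there. The restriction $s\ge t_1$ is used to ensure uniform smoothness of the discrete gradients of $\rho_N$ entering $g_s$.
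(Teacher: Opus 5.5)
\textbf{The proposal has a genuine gap in $d=1$: the step you propose there cannot succeed, because the stated rate is false in that dimension.}

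\textbf{Where you match the paper, and where you depart from it.} Your opening decomposition is exactly the paper's. The triangle inequality reduces everything to $\sup|\rho_N-\bar\rho|\lesssim N^{-2}$, which comes from \eqref{disc-lap-error}, plus a bound on the nearest-neighbour covariance. The paper derives nothing further: it imports the covariance bound from \cite{FPV91,FGL00} after the time change $t\mapsto N^2t$. Your route is self-contained: two-particle duality, the correlation equation with source $g_s=-\frac{N^2}{2d}(\rho_N(s,x)-\rho_N(s,y))^2$ on nearest-neighbour pairs, Duhamel's formula, and the occupation time of the difference walk. This is a legitimate alternative. For $d=2$ and $d\ge3$ it gives the stated rates once you supply the comparison you flag. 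That comparison is standard: for the unlabelled pair, the difference is the free difference walk with jumps into the origin suppressed. Its occupation of the neighbours of the origin is comparable to the free walk's.

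Two small remarks on this part:
\begin{itemize}
\item Restricting to $z$ with $p(z)>0$ is necessary, not just convenient. For $z=0$ the left-hand side equals $\bar\rho(1-\bar\rho)$, which is of order one.
\item The condition $s\ge t_1$ plays no role in your bound on $g_s$. Discrete gradients commute with $S_N$ and $\|S_N(t)\|_{L^\infty\to L^\infty}\le 1$, so $|\rho_N(s,x+z)-\rho_N(s,x)|\le\|\nabla\rho_0\|_{L^\infty}/N$ for all $s\ge0$.
\end{itemize}

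\textbf{Why $d=1$ cannot be fixed.} There is no smallness in $g_s$ to exploit. The source $g_s\le 0$ has a fixed sign, and $|g_s|=\tfrac12|\partial_x\bar\rho|^2+O(N^{-1})$ is of order one wherever $\partial_x\bar\rho\neq0$. This gives a matching lower bound:
\begin{itemize}
\item Pick $x_0$ with $\partial_x\bar\rho(s,x_0)\ne0$; this is possible unless $\rho_0$ is constant.
\item Restrict the Duhamel integral to $r\in[s-\tau,s]$ and to the event that the dual pair stays near $x_0$.
\item A pair started at neighbouring sites is adjacent at macroscopic time $\sigma$ with probability $\gtrsim(1+N\sqrt{\sigma})^{-1}$.
\item Hence $|\mathrm{Cov}(\eta_s(x_0),\eta_s(x_0+\tfrac1N))|\gtrsim\sqrt{\tau}\,N^{-1}$.
\end{itemize}
So your ``crude'' occupation bound is sharp. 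Nearest-neighbour correlations of the one-dimensional SSEP out of equilibrium are genuinely of order $N^{-1}$. An exact one-dimensional formula, such as the open-SSEP steady state, shows the same rather than improving it.

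\textbf{Consequence for the statement.} The term $N^{-2}I_{\{d=1\}}$, which the paper obtains only by citation, should read $N^{-1}I_{\{d=1\}}$ for nonconstant $\rho_0$. What your argument does prove in $d=1$, namely $N^{-1}$, is all that is used later: Proposition~\ref{error-1} bounds this error by $N^{-1}$ in every dimension anyway.
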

\begin{proof}
By the triangle inequality, we obtain
\begin{align*}
&\Big| \mathbb{E} \big( \eta_s(x)(1 - \eta_s(x+z)) \big) - \bar{\rho}(s,x) \big( 1 - \bar{\rho}(s,x+z) \big) \Big| \\
\leq\ & \Big| \mathbb{E} \eta_s(x) - \mathbb{E} \big( \eta_s(x) \eta_s(x+z) \big) + \rho_N(s,x) \rho_N(s,x+z) - \rho_N(s,x) \rho_N(s,x+z) \\
&\quad + \bar{\rho}(s,x) \bar{\rho}(s,x+z) - \bar{\rho}(s,x) \Big| \\
\leq\ & \big| \rho_N(s,x) - \bar{\rho}(s,x) \big| + \big| \mathbb{E} \big( \eta_s(x) \eta_s(x+z) \big) - \rho_N(s,x) \rho_N(s,x+z) \big| \\
&\quad + \big| \big( \rho_N(s,x) - \bar{\rho}(s,x) \big) \rho_N(s,x+z) \big| + \big| \bar{\rho}(s,x) \big( \rho_N(s,x+z) - \bar{\rho}(s,x+z) \big) \big| \\
\lesssim\ & \big| \rho_N(s,x) - \bar{\rho}(s,x) \big| + \big| \mathbb{E} \big( \eta_s(x) \eta_s(x+z) \big) - \rho_N(s,x) \rho_N(s,x+z) \big|.
\end{align*}
To estimate the first term, we apply \eqref{disc-lap-error}, which yields 
\begin{align*}
\sup_{s \in [0,T],\ x \in \mathbb{T}^d_N} \big| \rho_N(s,x) - \bar{\rho}(s,x) \big| \lesssim N^{-2}.
\end{align*}
To estimate the second term, we apply \cite[Theorem 2.1 and Theorem 2.2]{FPV91} and \cite[Theorem 2.3]{FGL00}, which yields
\begin{align}\label{covariance-es}
\sup_{s \in [t_1,T],\ x, z \in \mathbb{T}^d_N} \Big| \mathbb{E} \big( \eta_s(x) \eta_s(x+z) \big)& - \rho_N(s,x) \rho_N(s,x+z) \Big|\notag\\
 \lesssim& \quad N^{-2}I_{\{d=1\}}+\frac{\log(N^2)}{N^2}I_{\{d=2\}}+\frac{1}{N^2}I_{\{d\ge3\}}.
\end{align}
We remark that \cite[Theorem 2.1 and Theorem 2.2]{FPV91} provides a bound for the SSEP only in the one-dimensional case; however, the authors emphasize that these bounds can be improved to arbitrary higher dimensions, and that the one-dimensional case is the most difficult. Later, \cite[Theorem~2.3]{FGL00} establishes such bounds for arbitrary dimensions. In \cite[Theorem~2.3]{FGL00}, the authors work with the standard (or normal) time scale, and the estimates are expressed in terms of time. In the present context, however, we adopt the diffusive time scale; consequently, the time parameter $t$ in the estimates of \cite[Theorem~2.3]{FGL00} is replaced by $N^2 t$. Applying this rescaling, we obtain the estimate~\eqref{covariance-es}, which concludes the proof.
\end{proof}

Furthermore, we establish the following estimate for the Riemann sum approximation. 

\begin{lemma}
For every $\phi\in C^{\infty}(\mathbb{T}^d)$, the following bound holds:
\begin{align}\label{riemann-error}
	\left| \langle \nabla\phi, \bar{\rho}(1 - \bar{\rho}) \nabla\phi \rangle - N^{2-d} \sum_{x \in \mathbb{T}^d_N} \sum_{z \in \mathbb{T}^d_N} \bar{\rho}(x)(1 - \bar{\rho}(x+z))p(z)[\phi(x+z) - \phi(x)]^2 \right|
	\leq C(\rho_0,\phi) N^{-1}. 
\end{align}	
\end{lemma}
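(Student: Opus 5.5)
We estimate the discrete sum term by term against the integral. Fix the time $s$; below $\bar{\rho}$ stands for $\bar{\rho}(s,\cdot)$, which is smooth with bounds uniform in $s\in[0,T]$. Only $z=\pm\bar{e}_j$ contribute, each with weight $p(z)=\frac{1}{2d}$. Writing the lattice spacing as $1/N$, we have $x+z$ equal to $x\pm N^{-1}e_j$ in macroscopic coordinates. Taylor expansion then gives
\begin{align*}
\phi(x\pm N^{-1}e_j)-\phi(x)=\pm N^{-1}\partial_j\phi(x)+O(N^{-2}\|\phi\|_{C^2}),
\end{align*}
so that
\begin{align*}
N^2[\phi(x+z)-\phi(x)]^2=|\partial_j\phi(x)|^2+O(N^{-1}\|\phi\|_{C^2}^2).
\end{align*}
In the same way, $1-\bar{\rho}(x+z)=1-\bar{\rho}(x)+O(N^{-1}\|\nabla\bar{\rho}\|_{L^\infty})$. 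Multiplying these expansions and using $0\le\bar{\rho}\le1$, each summand becomes $\frac{1}{2d}\bar{\rho}(x)(1-\bar{\rho}(x))|\partial_j\phi(x)|^2$ plus an error of order $N^{-1}$, uniformly in $x$.

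Next we sum over $z$. The two directions $\pm\bar{e}_j$ give equal leading terms, so summing over the $2d$ neighbours yields
\begin{align*}
N^{2-d}\sum_{x\in\mathbb{T}^d_N}\sum_{z\in\mathbb{T}^d_N}\cdots=\frac{1}{2}N^{-d}\sum_{x\in\mathbb{T}^d_N}\bar{\rho}(x)(1-\bar{\rho}(x))|\nabla\phi(x)|^2+O(N^{-1}).
\end{align*}
The $O(N^{-1})$ error survives the summation because $N^{-d}\sum_x 1=1$. The factor $\frac12$ comes from $p(z)=\frac{1}{2d}$ and matches the $\frac12$ in front of the Laplacian. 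The comparison should therefore be with $\frac12\langle\nabla\phi,\bar{\rho}(1-\bar{\rho})\nabla\phi\rangle$, and I would track this normalisation constant carefully, absorbing it into the convention used in the statement.

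Finally, the remaining sum is a Riemann sum of the smooth function $g=\bar{\rho}(1-\bar{\rho})|\nabla\phi|^2$ on the uniform grid of cells of volume $N^{-d}$. We write
\begin{align*}
N^{-d}\sum_{x\in\mathbb{T}^d_N}g(x)-\int_{\mathbb{T}^d}g=\sum_{x\in\mathbb{T}^d_N}\int_{Q_x}\big(g(x)-g(y)\big)\,dy,
\end{align*}
where $Q_x$ is the cell attached to $x$. This is bounded by $\sqrt{d}\,N^{-1}\|\nabla g\|_{L^\infty}$. Since $\|\nabla g\|_{L^\infty}\le C(\|\bar{\rho}\|_{C^1},\|\phi\|_{C^2})$ and $\|\bar{\rho}(s)\|_{C^k}\le C\|\rho_0\|_{C^k}$ by the maximum principle for the heat equation, the constant depends only on $(\rho_0,\phi)$.

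I expect no real obstacle here. The only delicate points are keeping the Taylor remainders uniform in $x$ and $s$, and bookkeeping the factor $\frac{1}{2d}\cdot 2d\cdot\frac12$ so that the leading term matches exactly. The rate $N^{-1}$ is dictated by the one-sided expansions of $\phi$ and of $\bar{\rho}(x+z)$ and by the Riemann sum. It could be improved to $N^{-2}$ using symmetrisation in $\pm z$, but that is not needed.
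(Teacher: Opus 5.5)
Your route matches the paper's: Taylor-expand the increment of $\phi$ and $\bar{\rho}(x+z)$ around $x$ with remainders uniform in $x$ and $s$, then compare the lattice sum with the integral by a first-order Riemann-sum bound. Your remainder bookkeeping is fine; indeed $N^2[\phi(x+z)-\phi(x)]^2=|\partial_j\phi(x)|^2+O(N^{-1})$ is more accurate than the paper's claimed $O(N^{-4})$ for $Error_1$, although this does not affect the final rate.

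The genuine gap is the leading constant. Your per-neighbour term $\frac{1}{2d}\bar{\rho}(x)(1-\bar{\rho}(x))|\partial_j\phi(x)|^2$ is correct. However, the pair $z=\pm\bar{e}_j$ only carries the $j$-th partial derivative. Summing over all $2d$ neighbours therefore gives $\frac{2}{2d}\sum_j|\partial_j\phi|^2=\frac1d|\nabla\phi|^2$, not $\frac12|\nabla\phi|^2$; the two agree only for $d=2$. Your heuristic is also off. For a generator $a\Delta$ the carr\'e du champ is $2a|\nabla\phi|^2$. With $p(z)=\frac{1}{2d}$, whose generator is really $\frac{1}{2d}\Delta_N$ rather than the $\frac12\Delta_N$ written in \eqref{L-N}, this gives $\frac1d$. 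With the $\frac12\Delta$ normalization it gives $1$. Neither is $\frac12$ in general.

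More importantly, ``absorbing the constant into the convention'' is not a proof step. A multiplicative mismatch in the leading term cannot be hidden in a $C(\rho_0,\phi)N^{-1}$ bound. As written, your argument shows the discrete sum equals $\frac12\langle\nabla\phi,\bar{\rho}(1-\bar{\rho})\nabla\phi\rangle+O(N^{-1})$, which refutes rather than proves the stated inequality.

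What you need is to fix the normalization explicitly:
\begin{itemize}
\item With coefficient $1$, the lemma holds exactly under the convention $p(\pm\bar{e}_j)=\frac12$. This is the convention implicit in \eqref{L-N} and in $\partial_t\bar{\rho}=\frac12\Delta\bar{\rho}$, and it coincides with $p=\frac{1}{2d}$ only when $d=1$.
\item With $p(z)=\frac{1}{2d}$ taken literally, the correct limit is $\frac1d\langle\nabla\phi,\bar{\rho}(1-\bar{\rho})\nabla\phi\rangle$.
\end{itemize}

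Be aware that the paper's own proof contains the same slip. It correctly obtains $\frac{1}{dN^2}(\partial_{x_j}\phi(x))^2$ for each direction, then, after summing over $j$, writes $\frac{1}{N^2}|\nabla\phi(x)|^2$, silently dropping the factor $\frac1d$. Once the convention is fixed, the rest of your argument goes through unchanged with rate $N^{-1}$.
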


\begin{proof}
Fix $x \in \mathbb{T}^d_N$ and $z = \pm \bar{e}_j$ for $j=1,\dots,d$. By a Taylor expansion, we have
$$
\phi(x+z) - \phi(x) = \frac{1}{N} (\pm \partial_{x_j} \phi(x)) + \frac{1}{2N^2} \partial_{x_j}^2 \phi(\xi_{x,z}),
$$
for some $\xi_{x,z}$ between $x$ and $x+z$. Squaring both sides gives
$$
[\phi(x+z) - \phi(x)]^2 = \frac{1}{N^2} (\partial_{x_j} \phi(x))^2 + Error_1(N),
$$
where $Error_1(N) \leq C(\phi) N^{-4}$. Summing over $z = \pm \bar{e}_j$ with $p(z) = 1/(2d)$ yields
$$
\sum_{z=\pm \bar{e}_j} p(z) [\phi(x+z) - \phi(x)]^2 = \frac{1}{d N^2} (\partial_{x_j} \phi(x))^2 + \frac{1}{d} Error_1(N), \quad j=1,\dots,d.
$$

Summing over $j=1,\dots,d$, we obtain
$$
\sum_{j=1}^d \sum_{z = \pm \bar{e}_j} p(z) [\phi(x+z) - \phi(x)]^2 = \frac{1}{N^2} |\nabla \phi(x)|^2 + \sum_{j=1}^d\frac{1}{d}Error_1(N).
$$

Next, applying a first-order expansion to $\bar{\rho}(x+z)$ gives
$$
\bar{\rho}(x+z) = \bar{\rho}(x) + z \cdot \nabla \bar{\rho}(\xi_{x,z}),
$$
for some $\xi_{x,z}$ between $x$ and $x+z$, which implies
$$
\bar{\rho}(x) (1 - \bar{\rho}(x+z)) = \bar{\rho}(x) (1 - \bar{\rho}(x)) + Error_2(N),
$$
with $Error_2(N) \leq C(\rho_0) N^{-1}$. Combining the above estimates, we have
$$
\sum_{z \in \mathbb{T}^d_N} \bar{\rho}(x) (1 - \bar{\rho}(x+z)) p(z) [\phi(x+z) - \phi(x)]^2
= \frac{1}{N^2} \bar{\rho}(x) (1 - \bar{\rho}(x)) |\nabla \phi(x)|^2 + Error_3(N),
$$
where $Error_3(N) \leq C(\phi,\rho_0) N^{-1}$. Multiplying by $N^{2-d}$ and summing over $x \in \mathbb{T}^d_N$ gives a Riemann sum approximating $\int_{\mathbb{T}^d} \bar{\rho}(1-\bar{\rho}) |\nabla \phi|^2 dx$. Standard Riemann sum error estimates for smooth functions then yield
\begin{align*}
&\left| \langle \nabla\phi, \bar{\rho}(1 - \bar{\rho}) \nabla\phi \rangle - N^{2-d} \sum_{x \in \mathbb{T}^d_N} \sum_{z \in \mathbb{T}^d_N} \bar{\rho}(x)(1 - \bar{\rho}(x+z))p(z)[\phi(x+z) - \phi(x)]^2 \right|\\
&\lesssim \left| \langle \nabla\phi, \bar{\rho}(1 - \bar{\rho}) \nabla\phi \rangle - N^{-d} \sum_{x \in \mathbb{T}^d_N}  \bar{\rho}(x)(1 - \bar{\rho}(x))|\nabla\phi(x)|^2 \right| + \frac{C}{N} \lesssim \frac{C}{N},
\end{align*}
where $C$ depends on finitely many derivatives of $\bar{\rho}$ and $\phi$.
\end{proof}

In the following, we derive an error estimate quantifying the discrepancy between the quadratic variation of the process $\bar{X}^{1,N}_1$, as defined in \eqref{X11}, and the expected limiting expression involving the mobility matrix. 

\begin{proposition}\label{error-1}
Suppose the assumptions in Lemma \ref{error-rate} hold. For every $N \in \mathbb{N}_+$, let $\bar{X}^{1,N}_1$ be defined as in \eqref{X11}. Then, for all $t > h > 0$ and for every test function $\phi \in C^{\infty}(\mathbb{T}^d)$, the following estimate holds:
\begin{align}
\Bigg|\frac{1}{h} \mathbb{E}\big(\bar{X}^{1,N}_1(t+h,\phi)& - \bar{X}^{1,N}_1(t,\phi)\big)^2 - \langle \nabla\phi, \bar{\rho}(1 - \bar{\rho}) \nabla\phi \rangle\Bigg|\leq C(\rho_0,\phi)\left(h +N^{-1}\right). 
\end{align}
\end{proposition}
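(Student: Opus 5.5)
We mirror the Brownian-particle argument, using the Duhamel representation of $\bar{X}^{1,N}_1$ obtained from Lemma~\ref{Duhamel-formula}:
\[
\bar{X}^{1,N}_1(t,\phi)=\langle N^{d/2}(\pi^N_0-\mathbb{E}\pi^N_0),S_N(t)\phi\rangle+N^{d/2}\int_0^t\langle S_N(t-s)\phi,dM^N_s\rangle .
\]

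\textbf{Step 1: decomposition of the increment.} We split $\bar{X}^{1,N}_1(t+h,\phi)-\bar{X}^{1,N}_1(t,\phi)$ into three pieces. The first is an initial-data term $\langle N^{d/2}(\pi^N_0-\mathbb{E}\pi^N_0),(S_N(t+h)-S_N(t))\phi\rangle$. The second is a martingale term $N^{d/2}\int_0^t\langle (S_N(t+h-s)-S_N(t-s))\phi,dM^N_s\rangle$. The third is a fresh martingale term $N^{d/2}\int_t^{t+h}\langle S_N(t+h-s)\phi,dM^N_s\rangle$.

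\textbf{Step 2: the cross terms vanish.} Conditioning on $\mathcal{F}_0$ kills the covariances between the initial term and the two martingale terms. Orthogonality of martingale increments on $[0,t]$ and $[t,t+h]$ kills the remaining cross term. Hence the second moment is the sum of three squared terms.

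\textbf{Step 3: the initial term.} The initial measure is product Bernoulli, so its variance is at most $N^{-d}\sum_x\rho_0(1-\rho_0)\,|(S_N(t+h)-S_N(t))\phi(x)|^2$. Since $S_N$ has generator $\tfrac12\Delta_N$ and $\|\Delta_N\psi\|_\infty\lesssim\|\psi\|_{C^4}$ uniformly in $N$, this is $O(h^2)$ and contributes $O(h)$ after division by $h$.

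\textbf{Step 4: martingale second moments via the carré du champ.} We apply Lemma~\ref{lem-carre-du-champ} with the time-dependent test functions $\psi_s=(S_N(t+h-s)-S_N(t-s))\phi$ and $\psi_s=S_N(t+h-s)\phi$. Since $\bar{X}^{1,N}_1(s,\psi)$ is affine in $\eta$, identity \eqref{carre-property} gives
\[
\Gamma_N\bar{X}^{1,N}_1(s,\psi)=N^{2-d}\sum_{x,z}\eta_s(x)(1-\eta_s(x+z))p(z)[\psi(x+z)-\psi(x)]^2 .
\]
For the term over $[0,t]$, we bound $|\psi_s(x+z)-\psi_s(x)|\lesssim N^{-1}h\|\phi\|_{C^3}$, using that discrete derivatives of $S_N(\cdot)\phi$ are controlled by derivatives of $\phi$. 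The term over $[0,t]$ is therefore $O(h^2)$.

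\textbf{Step 5: the main term.} For the integral over $[t,t+h]$ we take expectations and replace $\mathbb{E}[\eta_s(x)(1-\eta_s(x+z))]$ by $\bar\rho(s,x)(1-\bar\rho(s,x+z))$ using Lemma~\ref{error-rate}. Since $s\ge t>h$, we may take $t_1=t$ there. Because $N^{2-d}\sum_{x,z}p(z)[\psi(x+z)-\psi(x)]^2=O(1)$, the replacement error is $O(h N^{-2}\log N)\le O(hN^{-1})$.

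We then replace $S_N(t+h-s)\phi$ by $\phi$ at cost $O(h)$ per unit time, using $\|S_N(r)\phi-\phi\|_{C^1}\lesssim r$ measured through discrete differences. Next we replace $\bar\rho(s)$ by $\bar\rho(t)$ at cost $O(h)$, by the smoothness of $\bar\rho$. Finally, the Riemann-sum bound \eqref{riemann-error} converts the remaining discrete sum into $\langle\nabla\phi,\bar\rho(t)(1-\bar\rho(t))\nabla\phi\rangle$ with error $O(N^{-1})$. Dividing everything by $h$ yields the bound $C(h+N^{-1})$.

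\textbf{Main obstacle.} The delicate step is the uniform-in-$N$ regularity of the discrete semigroup, namely that $|\nabla_N(S_N(r)-I)\phi|\lesssim r$ and $|\nabla_N(S_N(r+h)-S_N(r))\phi|\lesssim h$ with constants independent of $N$. I would prove this by noting that $S_N$ commutes with discrete differences (translation invariance) and is an $L^\infty$-contraction. This gives $\|\nabla_N(S_N(r)-I)\phi\|_\infty\le r\|\tfrac12\Delta_N\nabla_N\phi\|_\infty\lesssim r\|\phi\|_{C^3}$. A second, minor point is the uniformity of Lemma~\ref{error-rate} near $s=t$; this is handled by taking $t_1=t$ in that lemma, with constants allowed to depend on $t$.
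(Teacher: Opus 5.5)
Your proposal is correct and follows essentially the same route as the paper. It uses the same three-piece Duhamel splitting of the increment with vanishing cross terms, and the carr\'e du champ identity (Lemma~\ref{lem-carre-du-champ} with \eqref{carre-property}) for the two martingale second moments. It gives an $O(h^2)$ bound on the $[0,t]$ piece via $\|S_N(r)\|_{L^\infty\to L^\infty}\lesssim 1$ and $\|\Delta_N\nabla\phi\|_\infty$. For the fresh piece on $[t,t+h]$ it replaces the two-point function using Lemma~\ref{error-rate}, then $S_N(t+h-s)\phi$ by $\phi$, then $\bar\rho(s)$ by $\bar\rho(t)$, and closes with the Riemann-sum bound \eqref{riemann-error}.

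One small point in your favour: you observe directly that the deterministic centering $\mathbb{E}\eta_s$ cancels in $f(\eta^{x,x+z})-f(\eta)$, so the integrand is exactly $N^{2-d}\sum_{x,z}\eta_s(x)(1-\eta_s(x+z))p(z)[\psi(x+z)-\psi(x)]^2$. This is cleaner than the paper's corresponding step, which carries a superfluous probability factor before bounding it away.
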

\begin{proof}
Applying Duhamel's formula to \eqref{X11}, we observe that, almost surely, for every $t > h > 0$, the difference $\bar{X}^{1,N}_1(t+h,\phi) - \bar{X}^{1,N}_1(t,\phi)$ can be decomposed as follows:
\begin{align*}
\bar{X}^{1,N}_1(t+h,\phi)-\bar{X}^{1,N}_1(t,\phi)
=&\ \langle N^{d/2}(\pi^N_0 - \mathbb{E}\pi^N_0), (S_N(t+h)-S_N(t))\phi \rangle \\
&+ N^{d/2} \int_0^{t+h} \langle S_N(t+h-s)\,\phi,  dM^N_s \rangle 
- N^{d/2} \int_0^t \langle S_N(t-s)\,\phi,  dM^N_s \rangle \\
=&\ \langle (\pi^N_0 - \mathbb{E}\pi^N_0), (S_N(t+h)-S_N(t))\phi \rangle \\
&+ N^{d/2} \int_0^t \langle (S_N(t+h-s) - S_N(t-s))\, \phi, dM^N_s \rangle \\
&+ N^{d/2} \int_t^{t+h} \langle S_N(t+h-s)\,\phi,  dM^N_s \rangle.
\end{align*}

Taking the second moment of this expression, expanding the square, and applying It\^o's product rule together with the properties of conditional expectations with respect to the initial sigma-algebra $\mathcal{F}_0$, we find that all covariance terms vanish. As a consequence, we obtain
\begin{align*}
\mathbb{E} \left( \bar{X}^{1,N}_1(t+h,\phi) - \bar{X}^{1,N}_1(t,\phi) \right)^2 
=&\ \mathbb{E} \left\langle N^{d/2}(\pi^N_0 - \mathbb{E}\pi^N_0), (S_N(t+h)-S_N(t))\phi \right\rangle^2 \\
&+ \mathbb{E} \left( N^{d/2} \int_0^t \langle (S_N(t+h-s) - S_N(t-s))\, \phi, dM^N_s \rangle \right)^2 \\
&+ \mathbb{E} \left( N^{d/2} \int_t^{t+h} \langle S_N(t+h-s)\,\phi,  dM^N_s \rangle \right)^2.
\end{align*}
Thanks to the characterization of the martingale term provided in Lemma \ref{lem-carre-du-champ}, the stochastic integral terms appearing in the second moment expansion can be expressed using the carr\'e du champ operator associated with the generator $\mathcal{L}_N$. More precisely, we have
\begin{align*}
&\mathbb{E}\left( N^{d/2} \int_0^t \langle (S_N(t+h-s) - S_N(t-s))\,\phi,  dM^N_s \rangle \right)^2\\
=& \mathbb{E} \Bigg( \int_0^t \mathcal{L}_N\left( \bar{X}^{1,N}_1(s, (S_N(t+h-s) - S_N(t-s))\phi )^2 \right) \\
&\quad - 2\, \bar{X}^{1,N}_1(s, (S_N(t+h-s) - S_N(t-s))\phi )\, \mathcal{L}_N \bar{X}^{1,N}_1(s, (S_N(t+h-s) - S_N(t-s))\phi ) \, ds \Bigg) \\
=:& I_1,
\end{align*}
and similarly,
\begin{align*}
\mathbb{E}\left( N^{d/2} \int_t^{t+h} \langle \phi, S_N(t+h-s)\, dM^N_s \rangle \right)^2
&= \mathbb{E} \Bigg( \int_t^{t+h} \mathcal{L}_N\left( \bar{X}^{1,N}_1(s, S_N(t+h-s)\phi )^2 \right) \\
&\quad - 2\, \bar{X}^{1,N}_1(s, S_N(t+h-s)\phi )\, \mathcal{L}_N \bar{X}^{1,N}_1(s, S_N(t+h-s)\phi ) \, ds \Bigg) \\
&=: I_2.
\end{align*}
We now estimate the two terms $I_1$ and $I_2$ separately. For the first term $I_1$, by applying the characterization of the carr\'e du champ operator given in \eqref{carre-property}, and using the definition \eqref{X11} of $\bar{X}^{1,N}_1$, we obtain
\begin{align*}
I_1 
=&\ \mathbb{E} \int_0^t N^2 \sum_{x \in \mathbb{T}^d_N} \sum_{z \in \mathbb{T}^d_N} \eta_s(x)(1 - \eta_s(x+z))\, p(z) \\
&\quad \cdot \left[ \bar{X}^{1,N}_1(s, (S_N(t+h-s) - S_N(t-s))\phi)(\eta^{x,x+z}) - \bar{X}^{1,N}_1(s, (S_N(t+h-s) - S_N(t-s))\phi) \right]^2 ds \\
=&\ \mathbb{E} \int_0^t N^{2-d} \sum_{x \in \mathbb{T}^d_N} \sum_{z \in \mathbb{T}^d_N} \eta_s(x)(1 - \eta_s(x+z))\, p(z) \\
&\quad \cdot \left[ \sum_{y \in \mathbb{T}^d_N} \left( \eta^{x,x+z}_s(y) - \eta_s(y)-\mathbb{E}(\eta^{x,x+z}_s(y) - \eta_s(y)) \right) \left( (S_N(t+h-s) - S_N(t-s))\phi \right)(y) \right]^2 ds.
\end{align*}
Similarly, for the second term $I_2$, we have
\begin{align*}
I_2 
=&\ \mathbb{E} \int_t^{t+h} N^{2-d} \sum_{x \in \mathbb{T}^d_N} \sum_{z \in \mathbb{T}^d_N} \eta_s(x)(1 - \eta_s(x+z))\, p(z) \\
&\quad \cdot \left[ \sum_{y \in \mathbb{T}^d_N} \left( \eta^{x,x+z}_s(y) - \eta_s(y)-\mathbb{E}(\eta^{x,x+z}_s(y) - \eta_s(y)) \right) (S_N(t+h-s)\phi)(y) \right]^2 ds.
\end{align*}
Concerning the term $I_1$, we analyze the structure of the configuration difference under the particle exchange. By definition, for any $x, z \in \mathbb{T}^d_N$ such that $\eta_s(x) = 1$ and $\eta_s(x+z) = 0$, it holds that 
\begin{align}\label{configuration}
\eta^{x,x+z}_s(y) - \eta_s(y) = 
\begin{cases}
-1 & \text{if } y = x, \\
1  & \text{if } y = x+z, \\
0  & \text{otherwise}.
\end{cases}
\end{align}
Similarly, it follows that 
\begin{align}\label{configuration-2}
\mathbb{E}(\eta^{x,x+z}_s(y) - \eta_s(y)) = 
\begin{cases}
-\mathbb{P}(\eta_s(x) = 1,\ \eta_s(x+z) = 0) & \text{if } y = x, \\
\mathbb{P}(\eta_s(x) = 1,\ \eta_s(x+z) = 0)  & \text{if } y = x+z, \\
0  & \text{otherwise}.
\end{cases}
\end{align}
This observation implies that the difference $\eta^{x,x+z}_s(y) - \eta_s(y)$ is supported only on the sites $x$ and $x+z$, and hence the inner sum in the expression for $I_1$ simplifies. Therefore, we obtain
\begin{align*}
I_1 
=&\ \mathbb{E} \int_0^t N^{2-d} \sum_{x \in \mathbb{T}^d_N} \sum_{z \in \mathbb{T}^d_N} \eta_s(x)(1 - \eta_s(x+z))\, p(z) \Big[(1-\mathbb{P}(\eta_s(x) = 1,\ \eta_s(x+z) = 0))\\
&\quad\cdot ((S_N(t+h-s) - S_N(t-s))\phi)\left(x+z\right) - ((S_N(t+h-s) - S_N(t-s))\phi)\left(x\right) \Big]^2 ds.
\end{align*}

Applying Lagrange's intermediate value theorem and using the smoothness of $\phi$, together with the definition \eqref{semi-group-SN} of the discrete semigroup $S_N(t)$ and Young's inequality for convolutions, we obtain
\begin{align*}
I_1 \leq& \mathbb{E} \int_0^t N^{2-d} \sum_{x \in \mathbb{T}^d_N} \sum_{z \in \mathbb{T}^d_N} \eta_s(x)\left(1 - \eta_s(x+z)\right) p(z) \left\|(S_N(t+h-s) - S_N(t-s)) \nabla \phi\right\|_{L^\infty(\mathbb{T}^d)}^2|z|^2 \, ds. 
\end{align*}
With the help of the definition of $S_N(t)$, for every $\xi\in\mathbb{T}^d$, 
\begin{align*}
\left| (S_N(t+h-s)\nabla\phi)(\xi) - (S_N(t-s)\nabla \phi)(\xi) \right|=&\left|\int^{t+h-s}_{t-s}S_N(r)\frac{1}{2}\Delta_N\nabla\phi(\xi)dr\right|\\ 
\lesssim& \int^{t+h-s}_{t-s}\| S_N(r) \|_{\mathcal{L}(L^{\infty}(\mathbb{T}^d),L^{\infty}(\mathbb{T}^d))} \cdot \| \Delta_N\nabla \phi \|_{L^\infty(\mathbb{T}^d)}dr,
\end{align*}
where $\|\cdot\|_{\mathcal{L}(L^{\infty}(\mathbb{T}^d),L^{\infty}(\mathbb{T}^d))}$ denotes the operator norm from $L^{\infty}(\mathbb{T}^d)$ to $L^{\infty}(\mathbb{T}^d)$. Noting that by the definition of $S_N(t)$ and the discrete convolutional Young's inequality, we deduce that $$\| S_N(r) \|_{\mathcal{L}(L^{\infty}(\mathbb{T}^d),L^{\infty}(\mathbb{T}^d))}\lesssim1,\ \text{for every }r\in[0,T].$$ Combining this with the fact that $|z|=\frac{1}{N}$, we have  
\begin{align*}
I_1\leq& \mathbb{E} \int_0^t N^{2-d} \sum_{x \in \mathbb{T}^d_N} \sum_{z \in \mathbb{T}^d_N} \eta_s(x)\left(1 - \eta_s(x+z)\right) p(z)\cdot \frac{1}{N^2} h^2\|\Delta_N\nabla \phi\|_{L^\infty(\mathbb{T}^d)}^2 \, ds.
\end{align*}
Using the properties of the function $p$, the boundedness of $\eta$, the finiteness of the discrete torus, and continuity properties of the kernel $G_N$, we deduce that
\begin{align*}
I_1 \leq \left( \int_0^t N^{-d} \sum_{x \in \mathbb{T}^d_N} 1 \, ds \right) C(\phi) h^2 \leq C(\phi, T) h^2.
\end{align*}

Regarding the term $I_2$, we apply the properties \eqref{configuration} and \eqref{configuration-2} of the configuration to obtain
\begin{align*}
I_2 \lesssim& \, \mathbb{E}\int_t^{t+h} N^{2-d} \sum_{x\in\mathbb{T}^d_N} \sum_{z\in\mathbb{T}^d_N} \eta_s(x)(1-\eta_s(x+z))p(z) \\
& \quad \cdot \Big[(S_N(t+h-s)\phi)(x+z) - (S_N(t+h-s)\phi)(x)\Big]^2 \, ds.
\end{align*}
To facilitate the estimation of $I_2$, we decompose it into four terms and analyze each individually. By expanding the integrand, we obtain:
\begin{align*}
I_2 \lesssim& \int_t^{t+h} N^{2-d} \sum_{x\in\mathbb{T}^d_N} \sum_{z\in\mathbb{T}^d_N} \mathbb{E}\big(\eta_s(x)(1-\eta_s(x+z))\big)p(z) \\
& \quad \cdot \Big[(S_N(t+h-s)\phi)(x+z) - \phi(x+z)  - (S_N(t+h-s)\phi)(x) + \phi(x) + \phi(x+z) - \phi(x)\Big]^2 \, ds \\
=:& \, N^{2-d} \sum_{x\in\mathbb{T}^d_N} \sum_{z\in\mathbb{T}^d_N} \bar{\rho}(t,x)(1 - \bar{\rho}(t,x+z))p(z) [\phi(x+z) - \phi(x)]^2 h + I_{2,1} + I_{2,2} + I_{2,3},
\end{align*}
where $I_{2,1}$, $I_{2,2}$ and $I_{2,3}$ are defined by 
\begin{align*}
I_{2,1}=&N^{2-d} \sum_{x\in\mathbb{T}^d_N} \sum_{z\in\mathbb{T}^d_N} \bar{\rho}(t,x)(1 - \bar{\rho}(t,x+z))p(z) \\
& \cdot \int_t^{t+h} \Big[(S_N(t+h-s)\phi)(x+z) - (S_N(t+h-s)\phi)(x)- (\phi(x+z) - \phi(x))\Big]^2 ds,\\
I_{2,2}=&2N^{2-d} \sum_{x\in\mathbb{T}^d_N} \sum_{z\in\mathbb{T}^d_N} \bar{\rho}(t,x)(1 - \bar{\rho}(t,x+z))p(z)\int_t^{t+h} [\phi(x+z) - \phi(x)] \\
& \qquad \cdot \Big[(S_N(t+h-s)\phi)(x+z) - (S_N(t+h-s)\phi)(x)- (\phi(x+z) - \phi(x))\Big] ds,\\
I_{2,3}=&\int_t^{t+h} N^{2-d} \sum_{x\in\mathbb{T}^d_N} \sum_{z\in\mathbb{T}^d_N} \Big[\mathbb{E}(\eta_s(x)(1-\eta_s(x+z))) - \bar{\rho}(t,x)(1 - \bar{\rho}(t,x+z))\Big]p(z) \\
& \quad \cdot \Big[(S_N(t+h-s)\phi)(x+z) - \phi(x+z) - (S_N(t+h-s)\phi)(x) + \phi(x) + \phi(x+z) - \phi(x)\Big]^2 ds.	
\end{align*}

We now estimate $I_{2,1}$. By the Lagrange's intermediate value theorem, for each $N \in \mathbb{N}$ and $x, z \in \mathbb{T}^d_N$, there exist points $\xi_{x,z,N}, \xi_{x,z,N}' \in \mathbb{T}^d$ such that:
\begin{align*}
&I_{2,1} = N^{2-d} \sum_{x\in\mathbb{T}^d_N} \sum_{z\in\mathbb{T}^d_N} \bar{\rho}(t,x)(1 - \bar{\rho}(t,x+z))p(z) \\
&\quad \cdot \int_t^{t+h} \left[ (\nabla S_N(t+h-s)\phi)(\xi_{x,z,N}) \cdot z - (\nabla \phi)(\xi_{x,z,N}') \cdot z \right]^2 ds \\
&\leq N^{2-d} \sum_{x\in\mathbb{T}^d_N} \sum_{z\in\mathbb{T}^d_N} \bar{\rho}(t,x)(1 - \bar{\rho}(t,x+z))p(z) \int_t^{t+h} \left| (\nabla S_N(t+h-s)\phi)(\xi_{x,z,N}) - (\nabla \phi)(\xi_{x,z,N}') \right|^2 |z|^2 ds \\
&\leq N^{-d} \sum_{x\in\mathbb{T}^d_N} \sum_{z\in\mathbb{T}^d_N} \bar{\rho}(t,x)(1 - \bar{\rho}(t,x+z))p(z) \int_t^{t+h} \left| (\nabla S_N(t+h-s)\phi)(\xi_{x,z,N}) - (\nabla \phi)(\xi_{x,z,N}') \right|^2 ds.
\end{align*}

We now split the difference into two terms and estimate separately:
\begin{align*}
I_{2,1} 
&\lesssim N^{-d} \sum_{x\in\mathbb{T}^d_N} \sum_{z\in\mathbb{T}^d_N} \bar{\rho}(t,x)(1 - \bar{\rho}(t,x+z))p(z) \int_t^{t+h} \left| (\nabla S_N(t+h-s)\phi)(\xi_{x,z,N}) - (\nabla \phi)(\xi_{x,z,N}) \right|^2 ds \\
&\quad + N^{-d} \sum_{x\in\mathbb{T}^d_N} \sum_{z\in\mathbb{T}^d_N} \bar{\rho}(t,x)(1 - \bar{\rho}(t,x+z))p(z) \int_t^{t+h} \left| (\nabla \phi)(\xi_{x,z,N}) - (\nabla \phi)(\xi_{x,z,N}') \right|^2 ds.
\end{align*}

For the first term, with the help of the definition of $S_N(t)$, for every $\xi\in\mathbb{T}^d$, 
\begin{align*}
\left| (\nabla S_N(t+h-s)\phi)(\xi) - (\nabla \phi)(\xi) \right|=&\left|\int^{t+h-s}_0S_N(r)\frac{1}{2}\Delta_N\nabla\phi(\xi)dr\right|\\ 
\lesssim& \int^{t+h-s}_0\| S_N(r) \|_{\mathcal{L}(L^{\infty}(\mathbb{T}^d),L^{\infty}(\mathbb{T}^d))} \cdot \| \Delta_N\nabla \phi \|_{L^\infty(\mathbb{T}^d)}dr. 
\end{align*}
This implies that $\left| (\nabla S_N(t+h-s)\phi)(\xi) - (\nabla \phi)(\xi) \right|\lesssim (t+h-s)C(\phi)$, for every $\xi\in\mathbb{T}^d$.

For the second term, we apply the regularity of $\nabla \phi$ and the estimate:
$$
\left| (\nabla \phi)(\xi) - (\nabla \phi)(\xi') \right| \lesssim \| \phi \|_{C^2(\mathbb{T}^d)} |\xi - \xi'|.
$$

Combining all estimates, we obtain:
\begin{align*}
I_{2,1} 
&\lesssim N^{-d} \sum_{x\in\mathbb{T}^d_N} \sum_{z\in\mathbb{T}^d_N} \bar{\rho}(t,x)(1 - \bar{\rho}(t,x+z))p(z) \int_t^{t+h} (t+h-s)^2C(\phi) ds \\
&\quad + N^{-d} \sum_{x\in\mathbb{T}^d_N} \sum_{z\in\mathbb{T}^d_N} \bar{\rho}(t,x)(1 - \bar{\rho}(t,x+z))p(z) \int_t^{t+h} \|\phi \|_{C^2(\mathbb{T}^d)}^2 |\xi_{x,z,N} - \xi_{x,z,N}'|^2 ds \\
&\leq C(\bar{\rho}, \phi) h^3 + C(\bar{\rho}, \phi) h N^{-2}.
\end{align*}

Applying a similar argument to $I_{2,2}$, we deduce the estimate:
\begin{align*}
I_{2,2} \leq C(\bar{\rho}, \phi) h^2N^{-1} + C(\bar{\rho}, \phi) h N^{-2}.
\end{align*}
To estimate the term $I_{2,3}$, we begin by decomposing it into two parts based on the difference between the microscopic correlation and its hydrodynamic approximation, and the temporal fluctuation of the latter. Specifically, we write:
\begin{align*}
I_{2,3} =& \int_t^{t+h} N^{2-d} \sum_{x \in \mathbb{T}^d_N} \sum_{z \in \mathbb{T}^d_N} \big[\mathbb{E}(\eta_s(x)(1 - \eta_s(x+z))) - \bar{\rho}(s,x)(1 - \bar{\rho}(s,x+z))\big] p(z) \\
& \cdot \Big[(S_N(t+h-s)\phi)(x+z) - \phi(x+z) - (S_N(t+h-s)\phi)(x)  + \phi(x) + \phi(x+z) - \phi(x) \Big]^2 \, ds \\
& + \int_t^{t+h} N^{2-d} \sum_{x \in \mathbb{T}^d_N} \sum_{z \in \mathbb{T}^d_N} \big[\bar{\rho}(s,x)(1 - \bar{\rho}(s,x+z)) - \bar{\rho}(t,x)(1 - \bar{\rho}(t,x+z))\big] p(z) \\
& \cdot \Big[(S_N(t+h-s)\phi)(x+z) - \phi(x+z) - (S_N(t+h-s)\phi)(x)  + \phi(x) + \phi(x+z) - \phi(x) \Big]^2 \, ds.
\end{align*}

To control the first term, we use the replacement lemma \ref{error-rate}, which allows us to approximate the local product $\mathbb{E}(\eta_s(x)(1 - \eta_s(x+z)))$ by $\bar{\rho}(s,x)(1 - \bar{\rho}(s,x+z))$ up to an error bounded by $$N^{-2}+N^{-2}I_{\{d=1\}}+\frac{\log(1+N^2)}{1+N^2}I_{\{d=2\}}+\frac{1}{1+N^2}I_{\{d\ge3\}}\lesssim N^{-1}.$$ The additional factor of $h$ comes from the time integration. For the second term, we apply the regularity of the hydrodynamic profile $\bar{\rho}(t,x)$ in time, which implies that the difference $$\bar{\rho}(s,x)(1 - \bar{\rho}(s,x+z)) - \bar{\rho}(t,x)(1 - \bar{\rho}(t,x+z))$$ is of order $h$ uniformly in $x$ and $z$. Thus, we conclude that 
\begin{align*}
I_{2,3}\leq & \int_t^{t+h} N^{2-d} \sum_{x \in \mathbb{T}^d_N} \sum_{z \in \mathbb{T}^d_N}  N^{-1}  \, p(z) \\
& \cdot \Big[(S_N(t+h-s)\phi)(x+z) - \phi(x+z) - (S_N(t+h-s)\phi)(x)  + \phi(x) + \phi(x+z) - \phi(x) \Big]^2 \, ds \\
& + \int_t^{t+h} N^{2-d} \sum_{x \in \mathbb{T}^d_N} \sum_{z \in \mathbb{T}^d_N} C(\rho_0) h \, p(z) \\
& \cdot \Big[(S_N(t+h-s)\phi)(x+z) - \phi(x+z) - (S_N(t+h-s)\phi)(x) + \phi(x) + \phi(x+z) - \phi(x) \Big]^2 \, ds.
\end{align*}
Bounding the square term involving differences of $\phi$ and $S_N$ by Taylor expansions and smoothing estimates, we obtain
\begin{align*}
I_{2,3} \lesssim C(\rho_0, \phi) \Big(  N^{-1} + h \Big)h.
\end{align*}

To control the contribution from the initial data, we estimate
\begin{align*}
&\mathbb{E} \left\langle N^{d/2}(S_N(t+h) - S_N(t)) (\pi^N_0 - \mathbb{E}\pi^N_0), \phi \right\rangle^2\\
\leq&\mathbb{E}\Big(N^{-d/2}\sum_{x\in\mathbb{T}^d_N}(\eta_0(x)-\rho_0(x))[(S_N(t+h) - S_N(t))\phi](x)\Big)^2\\
\leq&N^{-d}\sum_{x\in\mathbb{T}^d_N}\mathbb{E}(\eta_0(x)-\rho_0(x))^2\|(S_N(t+h) - S_N(t))\phi\|_{L^{\infty}(\mathbb{T}^d)}^2\\
\leq& C(\phi,\rho_0) h^2,
\end{align*}
which follows from the contractivity of the semigroup and the assumed bounded variance of the initial empirical measure.

Finally, combining all the above estimates, we arrive at
\begin{align}
\Big| \frac{1}{h} \mathbb{E} \big(\bar{X}^{1,N}_1(t+h, \phi) - &\bar{X}^{1,N}_1(t, \phi)\big)^2 - \langle \nabla \phi, \bar{\rho}(1 - \bar{\rho}) \nabla \phi \rangle \Big|\notag\\
 \leq& C(\rho_0, \phi)(h^2 + h + N^{-2}+N^{-1})     + C(\rho_0, \phi) \Big(  N^{-1} + h \Big)\notag\\
\leq& C(\rho_0,\phi)\left(h +N^{-1}\right). 
\end{align}

\end{proof}

\begin{theorem}
	Suppose the assumptions in Lemma \ref{error-rate} hold. For every $0 < h < t < T$, we have
	\begin{align}
	\Big|\frac{1}{h}\mathbb{E}(\bar{X}^{1,N}(t+h,\phi)-\bar{X}^{1,N}(t,\phi))^2 - \langle \nabla\phi, \bar{\rho}(1-\bar{\rho}) \nabla\phi \rangle \Big|\leq  C(\rho_0,\phi)\left(h + hN^{d-4}+N^{-1}\right). \notag
	\end{align}
\end{theorem}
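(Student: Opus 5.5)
The proof builds on Proposition \ref{error-1} via the decomposition $\bar{X}^{1,N}=\bar{X}^{1,N}_1+\bar{X}^{1,N}_2$ from \eqref{X11}--\eqref{X12}. Write $A:=\bar{X}^{1,N}_1(t+h,\phi)-\bar{X}^{1,N}_1(t,\phi)$ and $B:=\bar{X}^{1,N}_2(t+h,\phi)-\bar{X}^{1,N}_2(t,\phi)$. The key structural observation is that $\bar{X}^{1,N}_2$ is deterministic while $\mathbb{E}A=0$, because $\bar{X}^{1,N}_1$ is centered by construction. Hence the cross term vanishes and
\begin{align*}
\mathbb{E}\big(\bar{X}^{1,N}(t+h,\phi)-\bar{X}^{1,N}(t,\phi)\big)^2=\mathbb{E}A^2+B^2 .
\end{align*}
Proposition \ref{error-1} already gives $|h^{-1}\mathbb{E}A^2-\langle\nabla\phi,\bar\rho(1-\bar\rho)\nabla\phi\rangle|\le C(\rho_0,\phi)(h+N^{-1})$. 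It therefore remains to show
\begin{align*}
h^{-1}B^2\le C(\rho_0,\phi)\,hN^{d-4}.
\end{align*}

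To bound $B$, I use $\mathbb{E}\eta_s(x)=\rho_N(s,x)$, which gives
\begin{align*}
B=N^{-d/2}\sum_{x\in\mathbb{T}^d_N}\int_t^{t+h}\partial_s(\rho_N-\bar\rho)(s,x)\,ds\;\phi(x).
\end{align*}
By \eqref{disc-diffusion-eq} and \eqref{heat-equation}, $\partial_s(\rho_N-\bar\rho)=\mathcal{L}_N\rho_N-\frac12\Delta\bar\rho$. I then apply Cauchy--Schwarz, first in time and then in the sum over the $N^d$ sites:
\begin{align*}
B^2\le h^2\,N^{-d}\Big(\sum_{x\in\mathbb{T}^d_N}|\phi(x)|\Big)^2\sup_{s,y}\Big|\mathcal{L}_N\rho_N-\tfrac12\Delta\bar\rho\Big|^2\le h^2\|\phi\|_{L^\infty}^2\,N^{d}\sup_{s,y}\Big|\mathcal{L}_N\rho_N-\tfrac12\Delta\bar\rho\Big|^2 .
\end{align*}
By \eqref{disc-lap-error-2}, the right-hand side is at most $C(\rho_0,\phi)h^2N^{d-4}$. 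Dividing by $h$ yields the $hN^{d-4}$ term, and the triangle inequality then gives the theorem.

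The main obstacle is the input \eqref{disc-lap-error-2} itself, namely the uniform-in-time bound $\sup|\mathcal{L}_N\rho_N-\frac12\Delta\bar\rho|\lesssim N^{-2}$. I would prove it by noting that $w_N:=\mathcal{L}_N\rho_N-\frac12\Delta\bar\rho$ satisfies the discrete heat equation $\partial_t w_N=\mathcal{L}_N w_N+(\mathcal{L}_N-\frac12\Delta)\frac12\Delta\bar\rho$. Its initial value $(\mathcal{L}_N-\frac12\Delta)\rho_0$ is $O(N^{-2})$ by a fourth-order Taylor expansion of the smooth $\rho_0$, and so is the forcing, since $\bar\rho$ is smooth. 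The $L^\infty$-contractivity of $S_N$ then closes the estimate.

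One minor point needs care: the heat equation used for $\bar\rho$ must be \eqref{heat-equation} with the factor $\frac12$, so that it matches the generator of $S_N$. Otherwise $B$ would not be small. I take this convention as intended throughout Section \ref{sec-4}.
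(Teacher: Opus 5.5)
Your proposal is correct and follows essentially the same route as the paper: you split $\bar{X}^{1,N}=\bar{X}^{1,N}_1+\bar{X}^{1,N}_2$, eliminate the cross term, invoke Proposition~\ref{error-1} for the centered part, and bound the deterministic part by $h\,N^{d}\sup|\mathcal{L}_N\rho_N-\frac12\Delta\bar\rho|^2\lesssim hN^{d-4}$ via \eqref{disc-lap-error-2}. Three details are slightly cleaner than the paper's wording: your reason for the vanishing cross term ($\bar{X}^{1,N}_2$ deterministic and $\mathbb{E}\,\bar{X}^{1,N}_1=0$), your integral-in-time bound in place of a pointwise mean value theorem, and your remark on the factor $\frac12$, which correctly flags that Section~\ref{sec-4} nominally refers to \eqref{diffusion-eq} while the argument actually requires \eqref{heat-equation}.
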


\begin{proof}
By the definition of $\bar{X}^{1,N}$, we decompose its increment over the time interval $[t, t+h]$ as
\begin{align*}
	&\frac{1}{h} \mathbb{E}(\bar{X}^{1,N}(t+h,\phi) - \bar{X}^{1,N}(t,\phi))^2 \\
	=&\; \frac{1}{h} \mathbb{E} \left( \bar{X}^{1,N}_1(t+h,\phi) - \bar{X}^{1,N}_1(t,\phi) + N^{-d/2} \sum_{x \in \mathbb{T}^d_N} \left[ (\rho_N - \bar{\rho})(x,t+h) - (\rho_N - \bar{\rho})(x,t) \right] \phi(x) \right)^2 \\
	=&\; \frac{1}{h} \mathbb{E}\left( \bar{X}^{1,N}_1(t+h,\phi) - \bar{X}^{1,N}_1(t,\phi) \right)^2 + N^{-d} \frac{1}{h} \left( \sum_{x \in \mathbb{T}^d_N} \left[ (\rho_N - \bar{\rho})(x,t+h) - (\rho_N - \bar{\rho})(x,t) \right] \phi(x) \right)^2,
\end{align*}
where we used the independence of the martingale part $\bar{X}^{1,N}_1$ and the deterministic fluctuation to eliminate cross-terms in the expectation.

We now estimate the second term. By the mean value theorem in time (Lagrange's form), there exists $\theta \in (0,1)$ such that
\begin{align*}
	&(\rho_N - \bar{\rho})(x,t+h) - (\rho_N - \bar{\rho})(x,t) = h\,\partial_t(\rho_N - \bar{\rho})(x,t + \theta h).
\end{align*}
Since both $\rho_N$ and $\bar{\rho}$ satisfy heat equations with respective discrete and continuous Laplacians, we have
$$
\partial_t (\rho_N - \bar{\rho}) = \mathcal{L}_N \rho_N - \frac{1}{2}\Delta \bar{\rho},
$$
and hence
\begin{align*}
	&N^{-d} \frac{1}{h} \left( \sum_{x \in \mathbb{T}^d_N} \left[ (\rho_N - \bar{\rho})(x,t+h) - (\rho_N - \bar{\rho})(x,t) \right] \phi(x) \right)^2 \\
	\leq\;& N^{-d} \frac{1}{h} \left( \sum_{x \in \mathbb{T}^d_N} h\,\|\mathcal{L}_N \rho_N - \frac{1}{2}\Delta \bar{\rho}\|_{L^\infty([0,T] \times \mathbb{T}^d_N)} \phi(x) \right)^2 \\
	\leq\;& h\, \|\mathcal{L}_N \rho_N - \frac{1}{2}\Delta \bar{\rho} \|^2_{L^\infty([0,T] \times \mathbb{T}^d_N)} \cdot N^{-d} \left( \sum_{x \in \mathbb{T}^d_N} \phi(x) \right)^2 \\
	\leq\;& C(\rho_0,\phi)\, h N^d \|\mathcal{L}_N \rho_N -\frac{1}{2}\Delta \bar{\rho} \|^2_{L^\infty([0,T] \times \mathbb{T}^d_N)} \leq C(\rho_0)hN^{d-4}.
\end{align*}

Now, applying Proposition~\ref{error-1}, together with the triangle inequality, \eqref{disc-lap-error-2} and \eqref{riemann-error}, we conclude 
\begin{align*}
	&\left| \frac{1}{h} \mathbb{E}(\bar{X}^{1,N}(t+h,\phi) - \bar{X}^{1,N}(t,\phi))^2 - \langle \nabla\phi, \bar{\rho}(1-\bar{\rho}) \nabla\phi \rangle \right| \\
	\leq\;& \left| \frac{1}{h} \mathbb{E}(\bar{X}^{1,N}_1(t+h,\phi) - \bar{X}^{1,N}_1(t,\phi))^2 - \langle \nabla\phi, \bar{\rho}(1-\bar{\rho}) \nabla\phi \rangle \right| \\
	&+ N^{-d} \frac{1}{h} \left( \sum_{x \in \mathbb{T}^d_N} \left[ (\rho_N - \bar{\rho})(x,t+h) - (\rho_N - \bar{\rho})(x,t) \right] \phi(x) \right)^2 \\
	\leq\;&  C(\rho_0,\phi)\left(h + hN^{d-4}+N^{-1}\right), 
\end{align*}
which completes the proof.
\end{proof}

\section{Fluctuating hydrodynamics with regular coefficients: error estimates}\label{sec-5}

In this section, our primary objective is to derive quantitative estimates comparing the quadratic variation of the fluctuation field to the corresponding mobility matrix. We first consider the case $\sigma(\zeta)=\sqrt{\zeta(1-\zeta)}$, for every $\zeta\in[0,1]$. Let $(\sigma_n(\cdot))_{n\geq1}$ be a sequence of approximation of $\sigma(\cdot)$, satisfying Assumption \ref{Assump-sigman}, recall that $\rho_0\in C^{\infty}(\mathbb{T}^d;[0,1])$, we investigate the equation \eqref{SPDE-1}, and recall that \eqref{SPDE-1} can be formulated by the following It\^o form: 
\begin{align}\label{SPDE-1-ito-sec-5}
d\rho_{\varepsilon} = \frac{1}{2}\Delta \rho_{\varepsilon}\,dt - \varepsilon^{1/2} \nabla\cdot\left(\sigma_{n(\varepsilon)}(\rho_{\varepsilon})\, dW_{\delta(\varepsilon)}\right) + \frac{\varepsilon}{2} \nabla\cdot\left(\sigma_{n(\varepsilon)}'(\rho_{\varepsilon})^2\, \nabla \rho_{\varepsilon} \right) F_{1,\delta(\varepsilon)},\quad \rho_{\varepsilon}(0)=\rho_0. 
\end{align}

To facilitate comparison, we also consider the deterministic hydrodynamic limit given by the heat equation with the same initial profile:
\begin{equation}\label{PDE}
d\bar{\rho} = \frac{1}{2}\Delta\bar{\rho}\, dt, \quad \bar{\rho}(0) = \rho_0.
\end{equation}

Define the fluctuation field by
$$
\bar{\rho}^1_{\varepsilon} := \varepsilon^{-1/2}(\rho_{\varepsilon} - \bar{\rho}),
$$
which captures the deviation of the stochastic density profile $\rho_{\varepsilon}$ from its deterministic limit $\bar{\rho}$. Applying Duhamel's formula and performing a straightforward computation, one obtains that $\bar{\rho}^1_{\varepsilon}$ satisfies the following mild representation:
\begin{align*}
\bar{\rho}^1_{\varepsilon}(t) 
= - \int_0^t S(t-s) \nabla \cdot \left( \sigma_{n(\varepsilon)}(\rho_{\varepsilon}) \, dW_{\delta(\varepsilon)}(s) \right) 
+ \frac{\varepsilon^{1/2}}{2} \int_0^t S(t-s) \nabla \cdot \left( \sigma_{n(\varepsilon)}'(\rho_{\varepsilon})^2 \nabla \rho_{\varepsilon} \right) F_{1,\delta(\varepsilon)} \, ds,
\end{align*}
where $S(t)$ denotes the heat semigroup generated by $\frac{1}{2}\Delta$ on $\mathbb{T}^d$.

Let $\bar{\rho}^1$ denote the solution to the limiting Gaussian stochastic PDE. It also admits a mild formulation given by
\begin{align*}
\bar{\rho}^1(t) = -\int_0^t S(t-s)\nabla\cdot\left(\sqrt{\bar{\rho}(1 - \bar{\rho})} \, dW(s)\right).
\end{align*}

We begin our analysis by providing a quantitative estimate corresponding to a law of large numbers result.

\begin{lemma}\label{lem-LLN-regular}
Assume that the initial condition $\rho_0$ and the regularization sequence $(\sigma_n(\cdot))_{n \geq 1}$ satisfy Assumptions~\ref{Assump-initial-1} and~\ref{Assump-sigman}, respectively. For each $\varepsilon > 0$, let $\rho_{\varepsilon}$ denote the mild solution to \eqref{SPDE-1-ito-sec-5} with initial condition $\rho_0$, and let $\bar{\rho}$ denote the mild solution to \eqref{PDE} with the same initial data. Then, under the scaling condition
\begin{align*}
\lim_{\varepsilon \to 0} \varepsilon \delta(\varepsilon)^{-d-2} = 0,
\end{align*}
it follows that
\begin{align*}
\sup_{t\in[0,T]}\mathbb{E} \left\| \rho_{\varepsilon}(t) - \bar{\rho}(t) \right\|_{L^2(\mathbb{T}^d)}^2 
\leq \varepsilon \delta(\varepsilon)^{-d-2} 
\longrightarrow 0 \quad \text{as } \varepsilon \to 0.
\end{align*}
\end{lemma}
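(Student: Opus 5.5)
The plan is to work directly with the difference $e_\varepsilon := \rho_\varepsilon - \bar{\rho}$. Subtracting \eqref{PDE} from \eqref{SPDE-1-ito-sec-5}, $e_\varepsilon$ solves
\begin{align*}
de_\varepsilon = \frac{1}{2}\Delta e_\varepsilon\,dt - \varepsilon^{1/2}\nabla\cdot\big(\sigma_{n(\varepsilon)}(\rho_\varepsilon)\,dW_{\delta(\varepsilon)}\big) + \frac{\varepsilon}{2}F_{1,\delta(\varepsilon)}\nabla\cdot\big(\sigma'_{n(\varepsilon)}(\rho_\varepsilon)^2\nabla\rho_\varepsilon\big)\,dt,\qquad e_\varepsilon(0)=0.
\end{align*}
I would then apply It\^o's formula to $\|e_\varepsilon(t)\|_{L^2(\mathbb{T}^d)}^2$. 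This is first done at the level of Galerkin approximations, or via the variational $H^{-1}$–$H^1$ framework, since the mild solution lies in $L^2([0,T];H^1)$. The stochastic integral term is a true martingale because $\sigma_n$ is bounded, so it disappears after taking expectations.

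After taking expectations, the following terms remain.
\begin{itemize}
\item The dissipation $-\mathbb{E}\int_0^t\|\nabla e_\varepsilon\|_{L^2}^2\,ds$.
\item The Stratonovich-correction term $-\frac{\varepsilon}{2}F_{1,\delta}\,\mathbb{E}\int_0^t\langle \sigma_n'(\rho_\varepsilon)^2\nabla\rho_\varepsilon,\nabla e_\varepsilon\rangle\,ds$.
\item The It\^o correction $\varepsilon\,\mathbb{E}\int_0^t\sum_k\|\nabla\cdot(\sigma_n(\rho_\varepsilon)f_{\delta,k})\|_{L^2}^2\,ds$.
\end{itemize}

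The key observation is that the It\^o correction contains a piece $\varepsilon F_{1,\delta}\int\sigma_n'(\rho_\varepsilon)^2|\nabla\rho_\varepsilon|^2$, which cancels against the correction drift once $\nabla e_\varepsilon=\nabla\rho_\varepsilon-\nabla\bar\rho$ is written out. This cancellation uses $F_{2,\delta}=0$ to eliminate the cross term $\sum_k f_{\delta,k}\nabla f_{\delta,k}$. What survives splits into three parts:
\begin{itemize}
\item $\varepsilon F_{3,\delta}\int\sigma_n(\rho_\varepsilon)^2$, which is bounded by $\varepsilon F_{3,\delta}\,c^2$ thanks to Assumption \ref{Assump-sigman};
\item a term $\varepsilon F_{1,\delta}\int\sigma_n'(\rho_\varepsilon)^2\nabla\rho_\varepsilon\cdot\nabla\bar\rho$;
\item the dissipation term.
\end{itemize}

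For the mixed term, I would write $\sigma_n'(\rho_\varepsilon)^2\nabla\rho_\varepsilon=\sigma_n'(\rho_\varepsilon)\nabla\sigma_n(\rho_\varepsilon)$ and integrate by parts onto $\nabla\bar\rho$, which is smooth. Alternatively, I could use the compact support of $\sigma'_n$ in $[0,1)$ together with $|\sigma_n\sigma_n'|\le c$. The point is to obtain a bound $\lesssim \varepsilon F_{1,\delta}\,C(\rho_0)$ plus a small multiple of the dissipation.

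Next I would use the standard mollifier scaling
\begin{align*}
F_{1,\delta}=\sum_k|m_\delta\ast e_k|^2=\|m_\delta\|_{L^2}^2\simeq\delta^{-d},\qquad F_{3,\delta}=\|\nabla m_\delta\|_{L^2}^2\simeq\delta^{-d-2}.
\end{align*}
Combining everything gives $\mathbb{E}\|e_\varepsilon(t)\|^2\lesssim C(\rho_0,T)\,\varepsilon(\delta^{-d-2}+\delta^{-d})\lesssim\varepsilon\delta^{-d-2}$ for $\delta\le1$, uniformly in $t\in[0,T]$.

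I expect the main obstacle to be the mixed term $\varepsilon F_{1,\delta}\int\sigma_n'(\rho_\varepsilon)^2\nabla\rho_\varepsilon\cdot\nabla\bar\rho$. Assumption \ref{Assump-sigman} does not bound $\sigma_n'$ uniformly in $n$, only $\sigma_n\sigma_n'$, so a naive Young inequality would introduce $\|\sigma'_n\|_\infty$.

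My first attempt would use the chain rule: introduce $\Psi_n$ with $\Psi_n'=(\sigma_n')^2$ and write the term as $\int\nabla\Psi_n(\rho_\varepsilon)\cdot\nabla\bar\rho=-\int\Psi_n(\rho_\varepsilon)\Delta\bar\rho$. This requires $\Psi_n$ to be bounded on $[0,1]$, which holds for each fixed $n$. If it fails to be uniform, I would accept an $n$-dependent constant absorbed into the choice $n(\varepsilon)$, or treat the mixed term using the mass/energy bound from \eqref{L2-es}-type estimates.

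A secondary technical point is justifying the It\^o formula for the mild solution. Here I would invoke the regularity $\rho_\varepsilon\in L^2([0,T];H^1)$, the smoothness of $\bar\rho$, and a standard approximation argument.
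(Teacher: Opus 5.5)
Your route is the same as the paper's: It\^o's formula for $\|\rho_\varepsilon-\bar\rho\|_{L^2}^2$, cancellation of the correction drift against the $F_{1,\delta}\sigma_n'(\rho_\varepsilon)^2|\nabla\rho_\varepsilon|^2$ part of the It\^o term (via $F_{2,\delta}=0$), and the bound $\varepsilon F_{3,\delta}\int\sigma_n(\rho_\varepsilon)^2\lesssim\varepsilon\delta^{-d-2}$. Your bookkeeping is correct. The term you isolate, $\varepsilon F_{1,\delta}\int_0^t\int\sigma_n'(\rho_\varepsilon)^2\nabla\rho_\varepsilon\cdot\nabla\bar\rho$, is exactly the one the paper's two-line proof passes over without comment. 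However, your proposal does not close it either. The lemma claims a bound independent of $n(\varepsilon)$, with no relation between $n(\varepsilon)$ and $\delta(\varepsilon)$ assumed, so this is a genuine gap.

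\textbf{The chain-rule fallback.} Your chain-rule bound gives $\lesssim\varepsilon F_{1,\delta}T\|\Delta\bar\rho\|_{L^\infty_tL^1_x}\operatorname{osc}_{[0,1]}\Psi_n$; constants in $\Psi_n$ drop out because $\int\Delta\bar\rho=0$. Uniformity in $n$ is not merely unproved here; it is false. Since $\sigma'(\zeta)^2=(1-2\zeta)^2/(4\zeta(1-\zeta))$ is not integrable at $0$ or $1$, $C^1_{\mathrm{loc}}((0,1))$ convergence gives $\liminf_n\int_0^{1/2}(\sigma_n')^2\ge\int_a^{1/2}(\sigma')^2\gtrsim\log(1/a)$ for every small $a>0$. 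Hence $\operatorname{osc}\Psi_n\to\infty$, typically logarithmically in $n$. ``Absorbing the constant into $n(\varepsilon)$'' therefore amounts to adding a hypothesis such as $\operatorname{osc}_{[0,1]}\Psi_{n(\varepsilon)}\lesssim\delta(\varepsilon)^{-2}$, which the statement does not contain.

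\textbf{The \eqref{L2-es} fallback.} This does not help either: that bound belongs to the renormalized kinetic framework for $\sigma=\sqrt{\zeta}$. Some uniform estimates are genuinely available here, for instance $\mathbb{E}\int_0^T\int\sigma_n'(\rho_\varepsilon)^2|\nabla\rho_\varepsilon|^2\le C(\rho_0)+C\varepsilon F_{3,\delta}$, from It\^o's formula for $\int G_n(\rho_\varepsilon)$ with $G_n''=(\sigma_n')^2$ and $|\sigma_n\sigma_n'|\le c$. But such estimates only control half of a Cauchy--Schwarz split. The other half is $\varepsilon F_{1,\delta}\int\sigma_n'(\rho_\varepsilon)^2|\nabla\bar\rho|^2$, or after integration by parts $\int|\Psi_n(\rho_\varepsilon)|$. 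Near $\rho_\varepsilon=0$ this behaves like $\frac14|\log\rho_\varepsilon|$ down to the regularization scale. An $H^1$ bound on $\sqrt{\rho_\varepsilon}$ is compatible with $\rho_\varepsilon$ vanishing on a set of positive measure, so it gives no control of this quantity.

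\textbf{What is needed.} To obtain the lemma as stated you need either an explicit growth condition on $n(\varepsilon)$ or a new ingredient controlling how much of $\rho_\varepsilon$ sits near $0$ and $1$. The cheapest improvement is to split on $\{|\rho_\varepsilon-\bar\rho|>c/2\}$, whose measure is at most $4c^{-2}\|\rho_\varepsilon-\bar\rho\|_{L^2}^2$ because $c\le\bar\rho\le1-c$, and then apply Gronwall. Even this still requires $\varepsilon\delta(\varepsilon)^{-d}\operatorname{osc}\Psi_{n(\varepsilon)}\lesssim1$.
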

\begin{proof}
Applying It\^o's formula, we observe that 
\begin{align*}
\sup_{t\in[0,T]}\mathbb{E}\|\rho_{\varepsilon}(t)\|_{L^2(\mathbb{T}^d)}^2+\mathbb{E}\int^T_0\|\nabla\rho_{\varepsilon}\|_{L^2(\mathbb{T}^d)}^2ds\leq& \|\rho_0\|_{L^2(\mathbb{T}^d)}^2+C(\rho_0,T)\varepsilon F_{3,\delta(\varepsilon)}\\
\leq&\|\rho_0\|_{L^2(\mathbb{T}^d)}^2+C(\rho_0,T)\varepsilon \delta(\varepsilon)^{-d-2}. 	
\end{align*}
Similarly, applying It\^o's formula to $\rho_{\varepsilon}-\bar{\rho}$, we conclude that 
\begin{align*}
	\sup_{t\in[0,T]}\mathbb{E}\|\rho_{\varepsilon}(t)-\bar{\rho}(t)\|_{L^2(\mathbb{T}^d)}^2+\mathbb{E}\int^T_0\|\nabla(\rho_{\varepsilon}-\bar{\rho})\|_{L^2(\mathbb{T}^d)}^2ds\leq&C(\rho_0,T)\varepsilon \delta(\varepsilon)^{-d-2}.
\end{align*}

\end{proof}

\begin{theorem}
Let $\sigma(\zeta)=\sqrt{\zeta(1-\zeta)}$. Assume that the initial condition $\rho_0$ satisfies Assumption \ref{Assump-initial-1} and the regularization sequence $(\sigma_n(\cdot))_{n \geq 1}$ satisfies Assumption \ref{Assump-sigman} with $\sigma_n\rightarrow\sigma$ in $C^1_{loc}((0,1))$. Let $c$ denote the lower bound of the initial data, as specified in Assumption \ref{Assump-initial-1}. Let $\phi \in C^{\infty}(\mathbb{T}^d)$, and define $\rho_{\varepsilon}^{\phi} := \langle \rho_{\varepsilon}, \phi \rangle$. The corresponding fluctuation field is given by
\begin{align*}
\bar{\rho}_{\varepsilon}^{1,\phi} := \varepsilon^{-1/2}(\rho_{\varepsilon}^{\phi} - \bar{\rho}^{\phi}).
\end{align*}
Then, for every $t > h > 0$, the following estimate holds:
\begin{align*}
\left| \frac{1}{h} \mathbb{E} \left( \bar{\rho}_{\varepsilon}^{1,\phi}(t+h) - \bar{\rho}_{\varepsilon}^{1,\phi}(t) \right)^2 
- \left\langle \nabla \phi, \bar{\rho}(t)(1 - \bar{\rho}(t)) \nabla \phi \right\rangle \right|
\leq C(\phi, \rho_0, t) \cdot \mathrm{Error}(h, \varepsilon, \delta(\varepsilon), n(\varepsilon)),
\end{align*}
where the error term is given by
\begin{align*}
\mathrm{Error}(h, \varepsilon, \delta(\varepsilon), n(\varepsilon)) =\ & h 
+ \varepsilon \delta(\varepsilon)^{-2d} \|\sigma_{n(\varepsilon)}'\|_{L^{\infty}([0,1])}^4 h + \varepsilon \delta(\varepsilon)^{-d-2}. 
\end{align*}
\end{theorem}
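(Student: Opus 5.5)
We plan to follow the same strategy as in Section~\ref{sec-3} and Proposition~\ref{error-1}, using the mild representation of $\bar{\rho}^1_{\varepsilon}$. Testing against $\phi$ and using self-adjointness of $S(t)$, we write
\begin{align*}
\bar{\rho}_{\varepsilon}^{1,\phi}(t) = \sum_{k} \int_0^t \langle \sigma_{n(\varepsilon)}(\rho_{\varepsilon}(s)) f_{\delta(\varepsilon),k}, S(t-s)\nabla\phi\rangle \, dB^k_s + \frac{\varepsilon^{1/2}}{2}F_{1,\delta(\varepsilon)} \int_0^t \langle \sigma_{n(\varepsilon)}'(\rho_{\varepsilon})^2\nabla\rho_{\varepsilon}, -S(t-s)\nabla\phi\rangle ds =: \mathcal{M}_t + \mathcal{A}_t .
\end{align*}
The increment $\bar{\rho}_{\varepsilon}^{1,\phi}(t+h)-\bar{\rho}_{\varepsilon}^{1,\phi}(t)$ then splits into three pieces. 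The first is the martingale piece $\int_0^t$ with kernel $(S(t+h-s)-S(t-s))\nabla\phi$. The second is the fresh martingale piece $\int_t^{t+h}$ with kernel $S(t+h-s)\nabla\phi$. The third is the increment of the drift $\mathcal{A}$.

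The two martingale pieces are orthogonal, since they live on disjoint time intervals. By It\^o's isometry, the first is bounded by $\|\sigma_n\|_\infty^2 F_{1,\delta}\,\|(S(t+h-s)-S(t-s))\nabla\phi\|_\infty^2 \lesssim h^2$, using Assumption~\ref{Assump-sigman}(2) and the fact that $F_{1,\delta}$ is constant. The second equals
\[
\mathbb{E}\int_t^{t+h}\sum_k \langle \sigma_n(\rho_\varepsilon) f_{\delta,k}, S(t+h-s)\nabla\phi\rangle^2 ds .
\]
The key identity here is $\sum_k \langle g f_{\delta,k}, \psi\rangle^2 = \| m_\delta * (g\psi)\|_{L^2}^2$, by Parseval. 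Writing $g=\sigma_n(\rho_\varepsilon)$, this produces the main term $h\langle \sigma(\bar\rho(t))^2, |\nabla\phi|^2\rangle = h\langle\nabla\phi,\bar\rho(1-\bar\rho)\nabla\phi\rangle$, plus errors of four kinds:
\begin{enumerate}
\item[(a)] $S(t+h-s)\nabla\phi-\nabla\phi$ and $\bar\rho(s)-\bar\rho(t)$, which contribute $O(h^2)$;
\item[(b)] the mollification $m_\delta*$, whose effect on smooth functions is handled by comparison;
\item[(c)] replacing $\sigma_{n}(\rho_\varepsilon)^2$ by $\sigma_n(\bar\rho)^2$;
\item[(d)] replacing $\sigma_n(\bar\rho)^2$ by $\sigma(\bar\rho)^2$.
\end{enumerate}
For (d), the maximum principle keeps $\bar\rho\in[c,1-c]$. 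Choosing the $\sigma_n$ to agree with $\sigma$ on $[c,1-c]$ (or using the $C^1_{loc}$ convergence) makes this error vanish, or at least absorbable. For (c), we use $|\sigma_n(a)^2-\sigma_n(b)^2|\le 2|\sigma_n\sigma_n'|\,|a-b|\lesssim|a-b|$. Cauchy--Schwarz and Lemma~\ref{lem-LLN-regular} then give a contribution $h(\varepsilon\delta^{-d-2})^{1/2}$. To land on the stated rate $\varepsilon\delta^{-d-2}$ rather than its square root, I would instead expand $\sigma_n(\rho_\varepsilon)^2$ to second order around $\bar\rho$. The linear term in $\rho_\varepsilon-\bar\rho$ has expectation controlled by $\varepsilon^{1/2}\mathbb{E}\bar\rho^1_\varepsilon$; its mean is only the drift $\mathcal{A}$, which is of order $\varepsilon$. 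The quadratic term is bounded by $\mathbb{E}\|\rho_\varepsilon-\bar\rho\|_{L^2}^2\le\varepsilon\delta^{-d-2}$. I expect this refinement to be delicate; if it fails, I will accept the square-root rate, which still gives a vanishing error.

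The drift increment $\mathcal{A}_{t+h}-\mathcal{A}_t$ and its cross terms with the martingale pieces are where the factor $\|\sigma_n'\|_\infty^4$ arises. We have $|F_{1,\delta}|\lesssim\delta^{-d}$, and $\mathcal{A}_{t+h}-\mathcal{A}_t$ consists of an $\int_t^{t+h}$ part plus an $\int_0^t$ part with kernel $S(t+h-s)-S(t-s)$. We bound its second moment by
\[
\varepsilon F_{1,\delta}^2\|\sigma_n'\|_\infty^4\, h\int (\ldots)\,\mathbb{E}\|\nabla\rho_\varepsilon\|_{L^2}^2\,ds ,
\]
controlled by the energy estimate in the proof of Lemma~\ref{lem-LLN-regular}. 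After dividing by $h$, this gives the term $\varepsilon\delta^{-2d}\|\sigma_n'\|^4_\infty h$. The $\int_0^t$ part requires writing $S(t+h-s)-S(t-s)=\int S'$ and moving one derivative onto $\phi$, to gain a factor $h$ uniformly in $s$. The cross terms between the drift and the martingale are handled by Cauchy--Schwarz, or else absorbed by Young's inequality into the two squared pieces. I expect these cross terms, together with obtaining the sharp (non-square-root) rate in (c), to be the main obstacle. Collecting all contributions and dividing by $h$ yields the stated $\mathrm{Error}$.
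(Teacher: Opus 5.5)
Your skeleton matches the paper's: the mild formulation, the split of the increment into the old martingale piece, the fresh martingale piece on $[t,t+h]$ and the drift increment, It\^o's isometry, the main term taken from the fresh piece, Lemma~\ref{lem-LLN-regular} to replace $\rho_\varepsilon$ by $\bar\rho$, and $\sigma_n=\sigma$ on $[c,1-c]$ together with the maximum principle. One step, however, fails as written: the drift on $[t,t+h]$.

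After dividing by $h$, your bound there leaves $\varepsilon F_{1,\delta}^2\|\sigma_n'\|_{L^\infty}^4\,\mathbb{E}\int_t^{t+h}\|\nabla\rho_\varepsilon\|_{L^2}^2\,ds$. The energy estimate behind Lemma~\ref{lem-LLN-regular} only controls $\mathbb{E}\int_0^T\|\nabla\rho_\varepsilon\|_{L^2}^2\,ds$. It does not give $\mathbb{E}\int_t^{t+h}\|\nabla\rho_\varepsilon\|_{L^2}^2\,ds\lesssim h$, so you get $\varepsilon\delta^{-2d}\|\sigma_n'\|_{L^\infty}^4$ without the factor $h$. The paper keeps the gradient off $\rho_\varepsilon$. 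It writes $\sigma_n'(\rho_\varepsilon)^2\nabla\rho_\varepsilon=\nabla\Sigma_n(\rho_\varepsilon)$ with $\Sigma_n(\zeta)=\int_0^\zeta\sigma_n'(r)^2\,dr$ and moves the derivative onto $\phi$. It then uses $|\Sigma_n(\zeta)|\le\|\sigma_n'\|_{L^\infty}^2|\zeta|$ with a time-uniform bound on $\rho_\varepsilon$, which gives $\varepsilon\delta^{-2d}\|\sigma_n'\|_{L^\infty}^4h^2$ for both drift parts. Separately, the factor $F_{1,\delta}\sim\delta^{-d}$ in your bound on the old martingale piece is not harmless. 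Your own Parseval identity gives $\|m_\delta*(g\psi)\|_{L^2}\le\|g\psi\|_{L^2}$ instead, with no $\delta$-loss.

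Even after this repair, your argument does not reach the stated Error, and your proposed refinement will not close the gap.

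\emph{Step (c).} The quadratic variation is $\|m_\delta*(\sigma_n(\rho_\varepsilon)\psi)\|_{L^2}^2$, not $\int\sigma_n(\rho_\varepsilon)^2|\psi|^2$. So the $\sigma_n\sigma_n'$ bound can only be used after removing the mollifier from a function with no spatial regularity. Replacing $\sigma_n(\rho_\varepsilon)$ by $\sigma_n(\bar\rho)$ inside the mollifier costs either a factor $\|\sigma_n'\|_{L^\infty}$ or a further square root. A second-order expansion recovers the first power of $\varepsilon\delta^{-d-2}$ only with factors like $\|\sigma_n'\|_{L^\infty}^2+\|\sigma_n''\|_{L^\infty}$, which blow up as $n\to\infty$. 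Also, the mean of the linear term comes from the It\^o correction and is of order $\varepsilon F_{1,\delta}$, not $\varepsilon$.

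\emph{Cross terms.} Cauchy--Schwarz on the drift--martingale cross terms gives $(\varepsilon\delta^{-2d}\|\sigma_n'\|_{L^\infty}^4h)^{1/2}$. Young's inequality cannot absorb them into a main term of order one.

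\emph{Step (b).} Removing the mollifier leaves an $O(\delta^2)$ error.

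The paper does not overcome these points either. It asserts that all cross terms vanish, although the drift is correlated with the martingale. It omits $m_\delta$ from It\^o's isometry; compare its own bound $I_{4,2}\lesssim\delta(\varepsilon)^2$ in Section~\ref{sec-6}. It also turns $(\sup_s\mathbb{E}\|\rho_\varepsilon-\bar\rho\|_{L^2}^2)^{1/2}$ into $\varepsilon\delta^{-d-2}$ and drops factors of $\|\sigma_n'\|_{L^\infty}$.

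In fact the bound cannot hold as stated. Take $\rho_0\equiv\frac12$, $\delta(\varepsilon)\equiv\delta_0$, and $\sigma_n=\sigma$ on $[c,1-c]$. Let $h\to0$ and then $\varepsilon\to0$. The right-hand side tends to $0$, while the left-hand side tends to $\frac14\big(\|\nabla\phi\|_{L^2}^2-\|m_{\delta_0}*\nabla\phi\|_{L^2}^2\big)>0$ for nonconstant $\phi$.

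So you were right not to force the rate. What this route honestly yields is an error of the form $h+\varepsilon\delta^{-2d}\|\sigma_n'\|_{L^\infty}^4h+(\varepsilon\delta^{-2d}\|\sigma_n'\|_{L^\infty}^4h)^{1/2}+\|\sigma_n'\|_{L^\infty}(\varepsilon\delta^{-d-2})^{1/2}+\delta^2$.
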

\begin{proof}
We expand the difference $\bar{\rho}^{1,\phi}_{\varepsilon}(t+h) - \bar{\rho}^{1,\phi}_{\varepsilon}(t)$ using Duhamel's formula. The integral is naturally split into two parts: one over the interval $[0, t]$ and the other over $[t, t+h]$. A straightforward computation yields the following expression:
\begin{align*}
&\bar{\rho}^{1,\phi}_{\varepsilon}(t+h) - \bar{\rho}^{1,\phi}_{\varepsilon}(t)\\ 
=& - \int_0^{t+h} \Big\langle \nabla\phi, S(t+h - s) \sigma_{n(\varepsilon)}(\rho_{\varepsilon}) \, dW_{\delta(\varepsilon)}(s) \Big\rangle 
+ \int_0^t \Big\langle \nabla\phi, S(t - s) \sigma_{n(\varepsilon)}(\rho_{\varepsilon}) \, dW_{\delta(\varepsilon)}(s) \Big\rangle \\
& + \frac{\varepsilon^{1/2}}{2} F_{1,\delta(\varepsilon)} \int_0^{t+h} \left\langle \nabla\phi, S(t+h - s) \left( \sigma_{n(\varepsilon)}'(\rho_{\varepsilon})^2 \nabla \rho_{\varepsilon} \right) \right\rangle ds \\
& - \frac{\varepsilon^{1/2}}{2} F_{1,\delta(\varepsilon)} \int_0^t \left\langle \nabla\phi, S(t - s) \left( \sigma_{n(\varepsilon)}'(\rho_{\varepsilon})^2 \nabla \rho_{\varepsilon} \right) \right\rangle ds \\
=& - \int_0^t \Big\langle S(h)\nabla\phi - \nabla\phi, S(t - s) \sigma_{n(\varepsilon)}(\rho_{\varepsilon}) \, dW_{\delta(\varepsilon)}(s) \Big\rangle 
- \int_t^{t+h} \Big\langle \nabla\phi, S(t+h - s) \sigma_{n(\varepsilon)}(\rho_{\varepsilon}) \, dW_{\delta(\varepsilon)}(s) \Big\rangle \\
& + \frac{\varepsilon^{1/2}}{2} F_{1,\delta(\varepsilon)} \int_0^{t+h} \left\langle \nabla\phi, S(t+h - s) \left( \sigma_{n(\varepsilon)}'(\rho_{\varepsilon})^2 \nabla \rho_{\varepsilon} \right) \right\rangle ds \\
& - \frac{\varepsilon^{1/2}}{2} F_{1,\delta(\varepsilon)} \int_0^t \left\langle \nabla\phi, S(t - s) \left( \sigma_{n(\varepsilon)}'(\rho_{\varepsilon})^2 \nabla \rho_{\varepsilon} \right) \right\rangle ds.
\end{align*}
Taking the second moment, expanding the square, and applying It\^o's product rule, we observe that all cross-covariance terms vanish. Using It\^o's isometry along with the covariance structure of the Brownian motions, we obtain
\begin{align*}
\mathbb{E} \left( \bar{\rho}^{1,\phi}_{\varepsilon}(t+h) - \bar{\rho}^{1,\phi}_{\varepsilon}(t) \right)^2 
=&\ \mathbb{E} \int_0^t \| (S(h)\nabla\phi - \nabla\phi)\, S(t - s) \sigma_{n(\varepsilon)}(\rho_{\varepsilon}) \|_{L^2(\mathbb{T}^d)}^2\, ds \\
&+ \mathbb{E} \int_t^{t+h} \| \nabla\phi\, S(t+h - s)\, \sigma_{n(\varepsilon)}(\rho_{\varepsilon}) \|_{L^2(\mathbb{T}^d)}^2\, ds \\
&+ \frac{\varepsilon}{4} F_{1,\delta(\varepsilon)}^2\, \mathbb{E} \left( \int_0^{t+h} \left\langle \nabla\phi, S(t+h - s) \left( \sigma_{n(\varepsilon)}'(\rho_{\varepsilon})^2 \nabla \rho_{\varepsilon} \right) \right\rangle ds \right. \\
&\qquad \left. - \int_0^t \left\langle \nabla\phi, S(t - s) \left( \sigma_{n(\varepsilon)}'(\rho_{\varepsilon})^2 \nabla \rho_{\varepsilon} \right) \right\rangle ds \right)^2 \\
=:&\ I_1(h,\varepsilon) + I_2(h,\varepsilon) + I_3(h,\varepsilon).
\end{align*}

We now estimate each of the terms $I_1(h,\varepsilon)$, $I_2(h,\varepsilon)$, and $I_3(h,\varepsilon)$ separately. For the first term $I_1(h,\varepsilon)$, applying the convolutional Young's inequality yields
\begin{align*}
I_1(h,\varepsilon) 
&\leq \mathbb{E} \int_0^t \| S(h)\nabla\phi - \nabla\phi \|_{L^\infty(\mathbb{T}^d)}^2 
\cdot \| S(t - s)\sigma_{n(\varepsilon)}(\rho_{\varepsilon}) \|_{L^{2}(\mathbb{T}^d)}^2\, ds \\
&\leq C(\rho_0)\int_0^t \Big(\int^h_0\| S(r)\Delta\nabla\phi \|_{L^\infty(\mathbb{T}^d)}dr\Big)^2 
\cdot \| p(t - s) \|_{L^1(\mathbb{T}^d)}^2\, ds \\
&\leq C(\phi,\rho_0)\, h^2 t,
\end{align*}
where $p(t - s)$ denotes the heat kernel on $\mathbb{T}^d$ and the final inequality follows from the smoothing property of the heat semigroup.

To estimate $I_2(h,\varepsilon)$, we decompose the integrand by introducing an intermediate term involving the pointwise evaluation at time $t$. Specifically, we write:
\begin{align*}
I_2(h,\varepsilon) 
=&\ \mathbb{E} \int_t^{t+h} \left( \| \nabla\phi\, S(t+h-s)\, \sigma_{n(\varepsilon)}(\rho_{\varepsilon}(s)) \|_{L^2(\mathbb{T}^d)}^2 
- \| \nabla\phi\, \sigma_{n(\varepsilon)}(\rho_{\varepsilon}(t)) \|_{L^2(\mathbb{T}^d)}^2 \right) ds \\
&+ \mathbb{E} \int_t^{t+h} \left( \| \nabla\phi\, \sigma_{n(\varepsilon)}(\rho_{\varepsilon}(t)) \|_{L^2(\mathbb{T}^d)}^2 
- \| \nabla\phi\, \sqrt{\bar{\rho}(t)(1 - \bar{\rho}(t))} \|_{L^2(\mathbb{T}^d)}^2 \right) ds \\
&+ h \| \nabla\phi\, \sqrt{\bar{\rho}(t)(1 - \bar{\rho}(t))} \|_{L^2(\mathbb{T}^d)}^2 \\
=:&\ I_{2,1}(h,\varepsilon) + I_{2,2}(h,\varepsilon) + h \| \nabla\phi\, \sqrt{\bar{\rho}(t)(1 - \bar{\rho}(t))} \|_{L^2(\mathbb{T}^d)}^2.
\end{align*}

We first consider $I_{2,1}(h,\varepsilon)$. Using the non-negativity of $\rho_{\varepsilon}$ and the preservation of mass, and expanding the square via the polarization identity, we obtain:
\begin{align*}
I_{2,1}(h,\varepsilon) 
=&\ \mathbb{E} \int_t^{t+h} \int_{\mathbb{T}^d} \left( 
|\nabla\phi\, S(t+h - s)\, \sigma_{n(\varepsilon)}(\rho_{\varepsilon}(s))|^2 
- |\nabla\phi\, \sigma_{n(\varepsilon)}(\rho_{\varepsilon}(t))|^2 
\right) dx\, ds \\
=&\ \mathbb{E} \int_t^{t+h} \int_{\mathbb{T}^d} \left( 
\nabla\phi\, S(t+h - s)\, \sigma_{n(\varepsilon)}(\rho_{\varepsilon}(s)) 
+ \nabla\phi\, \sigma_{n(\varepsilon)}(\rho_{\varepsilon}(t)) 
\right) \cdot \\
&\hspace{3cm} \left( 
\nabla\phi\, S(t+h - s)\, \sigma_{n(\varepsilon)}(\rho_{\varepsilon}(s)) 
- \nabla\phi\, \sigma_{n(\varepsilon)}(\rho_{\varepsilon}(t)) 
\right) dx\, ds \\
\leq&\ C(\phi) \mathbb{E} \int_t^{t+h} \| S(t+h - s)\, \sigma_{n(\varepsilon)}(\rho_{\varepsilon}(s)) 
- \sigma_{n(\varepsilon)}(\rho_{\varepsilon}(t)) \|_{L^2(\mathbb{T}^d)} ds.
\end{align*}

We then split the difference inside the $L^1$-norm into two terms:
\begin{align*}
I_{2,1}(h,\varepsilon) 
\leq&\ C(\phi) \mathbb{E} \int_t^{t+h} \| S(t+h - s)\, \sigma_{n(\varepsilon)}(\rho_{\varepsilon}(s)) 
- S(t+h - s)\, \sigma_{n(\varepsilon)}(\rho_{\varepsilon}(t)) \|_{L^2(\mathbb{T}^d)} ds \\
&+ C(\phi) \mathbb{E} \int_t^{t+h} \| S(t+h - s)\, \sigma_{n(\varepsilon)}(\rho_{\varepsilon}(t)) 
- \sigma_{n(\varepsilon)}(\rho_{\varepsilon}(t)) \|_{L^2(\mathbb{T}^d)} ds \\
=:&\ I_{2,1,1}(h,\varepsilon) + I_{2,1,2}(h,\varepsilon).
\end{align*}
We now estimate the term $I_{2,1,1}(h,\varepsilon)$. By applying convolutional Young's inequality to the heat semigroup $S(t)$, we obtain
\begin{align*}
I_{2,1,1}(h,\varepsilon) 
&\leq C(\phi) \mathbb{E} \int_t^{t+h} \| \sigma_{n(\varepsilon)}(\rho_{\varepsilon}(s)) - \sigma_{n(\varepsilon)}(\rho_{\varepsilon}(t)) \|_{L^2(\mathbb{T}^d)} ds.
\end{align*}
Using the Lipschitz continuity of $\sigma_{n(\varepsilon)}$ and the triangle inequality, we decompose the integrand into microscopic and macroscopic fluctuations:
\begin{align*}
I_{2,1,1}(h,\varepsilon) 
\leq\ & C(\phi) \| \sigma_{n(\varepsilon)}' \|_{L^{\infty}([0,1])} \mathbb{E} \int_t^{t+h} \| \rho_{\varepsilon}(s) - \bar{\rho}(s) \|_{L^2(\mathbb{T}^d)} ds \\
& + C(\phi) \| \sigma_{n(\varepsilon)}' \|_{L^{\infty}([0,1])} \mathbb{E} \| \rho_{\varepsilon}(t) - \bar{\rho}(t) \|_{L^2(\mathbb{T}^d)}\, h \\
& + C(\phi) \| \sigma_{n(\varepsilon)}' \|_{L^{\infty}([c,1-c])} \mathbb{E} \int_t^{t+h} \| \bar{\rho}(s) - \bar{\rho}(t) \|_{L^2(\mathbb{T}^d)} ds \\
=:&\ I_{2,1,1,1}(h,\varepsilon) + I_{2,1,1,2}(h,\varepsilon) + I_{2,1,1,3}(h,\varepsilon).
\end{align*}

To bound $I_{2,1,1,1}(h,\varepsilon)$, we apply Lemma \ref{lem-LLN-regular} to see that 
\begin{align*}
I_{2,1,1,1}(h,\varepsilon) 
&\leq C(\phi) \| \sigma_{n(\varepsilon)}' \|_{L^{\infty}([0,1])} 
\left( \varepsilon \delta(\varepsilon)^{-d-2} \right) h.
\end{align*}

For the second term, $I_{2,1,1,2}(h,\varepsilon)$, we use the same type of fluctuation estimate at a fixed time $t$:
\begin{align*}
I_{2,1,1,2}(h,\varepsilon) 
&\leq C(\phi) \left( \varepsilon \delta(\varepsilon)^{-d-2} \right) h.
\end{align*}

Finally, for the macroscopic contribution $I_{2,1,1,3}(h,\varepsilon)$, we use the temporal regularity of the macroscopic density $\bar{\rho}$, which is governed by the heat equation. In particular, the smoothing effect of the Laplacian implies that $\bar{\rho} \in W^{1,\infty}([0,T]; L^1(\mathbb{T}^d))$. Therefore, we obtain
\begin{align*}
I_{2,1,1,3}(h,\varepsilon) 
&\leq C(\phi) \| \sigma_{n(\varepsilon)}' \|_{L^{\infty}([c,1-c])} 
\| \bar{\rho} \|_{W^{1,\infty}([0,T]; L^2(\mathbb{T}^d))} h^2 \\
&\leq C(\phi, c) \| \bar{\rho} \|_{L^{\infty}([0,T]; H^2(\mathbb{T}^d))} h^2 
\leq C(\phi, c, \rho_0) h^2.
\end{align*}

We next estimate the term $I_{2,1,2}(h,\varepsilon)$. We have the following decomposition: 
\begin{align*}
I_{2,1,2}(h,\varepsilon)
\leq\ & C(\phi) \mathbb{E} \int_t^{t+h} \| S(t+h-s)\sigma_{n(\varepsilon)}(\rho_{\varepsilon}(t)) - S(t+h-s)\sigma_{n(\varepsilon)}(\bar{\rho}(t)) \|_{L^2(\mathbb{T}^d)} ds \\
&+ C(\phi) \mathbb{E} \int_t^{t+h} \| S(t+h-s)\sigma_{n(\varepsilon)}(\bar{\rho}(t)) - \sigma_{n(\varepsilon)}(\bar{\rho}(t)) \|_{L^2(\mathbb{T}^d)} ds\\
&+C(\phi)\mathbb{E}\int^{t+h}_t\|\sigma_{n(\varepsilon)}(\bar{\rho}(t))-\sigma_{n(\varepsilon)}(\rho_{\varepsilon}(t))\|_{L^2(\mathbb{T}^d)}ds. 
\end{align*}
Each term is estimated as follows, the first term and the third term can be controlled via the law of large numbers and Lipschitz continuity of $ \sigma_{n(\varepsilon)} $. The second term is controlled by the strong continuity of the semigroup $ S(t) $. Putting these together, applying H\"older's inequality and Lemma \ref{lem-LLN-regular}, we obtain
\begin{align*}
I_{2,1,2}(h,\varepsilon)\leq&C(\phi)\mathbb{E}\int^{t+h}_t\| \sigma_{n(\varepsilon)}(\bar{\rho}(t)) - \sigma_{n(\varepsilon)}(\rho_{\varepsilon}(t)) \|_{L^1(\mathbb{T}^d)} ds+C(\phi) h^2\\
\leq&C(\phi) \|\sigma_{n(\varepsilon)}'\|_{L^{\infty}([0,1])}(\sup_{t\in[0,T]}\mathbb{E}\|\rho_{\varepsilon}(t)-\bar{\rho}(t)\|_{L^2(\mathbb{T}^d)})h+C(\phi) h^2\\
\leq& C(\phi) \|\sigma_{n(\varepsilon)}'\|_{L^{\infty}([0,1])} 
\left( \varepsilon \delta(\varepsilon)^{-d-2} \right) h 
+ C(\phi) h^2.
\end{align*}

\medskip

We now consider $ I_{2,2}(h,\varepsilon) $. By the triangle inequality, we write
\begin{align*}
I_{2,2}(h,\varepsilon)
&\leq C(\phi) \mathbb{E} \int_t^{t+h} \int_{\mathbb{T}^d} \left| \sigma_{n(\varepsilon)}(\rho_{\varepsilon}(s))^2 - \bar{\rho}(t)(1 - \bar{\rho}(t)) \right| dx ds \\
&\leq C(\phi) \mathbb{E} \int_t^{t+h} \int_{\mathbb{T}^d} \left| \sigma_{n(\varepsilon)}(\rho_{\varepsilon}(s))^2 - \sigma_{n(\varepsilon)}(\bar{\rho}(s))^2 \right| dx ds \\
&\quad + \int_t^{t+h} \int_{\mathbb{T}^d} \left| \sigma_{n(\varepsilon)}(\bar{\rho}(s))^2 - \sigma_{n(\varepsilon)}(\bar{\rho}(t))^2 \right| dx ds \\
&\quad + h \int_{\mathbb{T}^d} \left| \sigma_{n(\varepsilon)}(\bar{\rho}(t))^2 - \bar{\rho}(t)(1 - \bar{\rho}(t)) \right| dx \\
&=: I_{2,2,1}(h,\varepsilon) + I_{2,2,2}(h,\varepsilon) + I_{2,2,3}(h,\varepsilon).
\end{align*}

For $ I_{2,2,1}(h,\varepsilon) $, we apply the Lipschitz bound on $ \sigma_{n(\varepsilon)}^2 $ and use the fluctuation estimate:
\begin{align*}
I_{2,2,1}(h,\varepsilon)
&\leq C(\phi) \| \sigma_{n(\varepsilon)} \sigma_{n(\varepsilon)}' \|_{L^{\infty}([0,1])} 
\mathbb{E} \int_t^{t+h} \| \rho_{\varepsilon}(s) - \bar{\rho}(s) \|_{L^1(\mathbb{T}^d)} ds \\
&\leq C(\phi) \| \sigma_{n(\varepsilon)} \sigma_{n(\varepsilon)}' \|_{L^{\infty}([0,1])} 
\left( \sup_{s \in [0,T]} \mathbb{E} \| \rho_{\varepsilon}(s) - \bar{\rho}(s) \|_{L^2(\mathbb{T}^d)}^2 \right)^{1/2} h \\
&\leq C(\phi) \left( \varepsilon \delta(\varepsilon)^{-d-2} \right) h.
\end{align*}

For $ I_{2,2,2}(h,\varepsilon) $, we exploit the temporal regularity of the macroscopic profile $ \bar{\rho} $:
\begin{align*}
I_{2,2,2}(h,\varepsilon)
&\leq C(\phi) \| \sigma_{n(\varepsilon)}\sigma_{n(\varepsilon)}' \|_{L^{\infty}([c,1-c])} 
\| \bar{\rho} \|_{W^{1,\infty}([0,T]; L^1(\mathbb{T}^d))} h^2 \\
&\leq C(\phi, c) \| \bar{\rho} \|_{L^{\infty}([0,T]; W^{2,1}(\mathbb{T}^d))} h^2 
\leq C(\phi, c, \rho_0) h^2.
\end{align*}

Lastly, for $ I_{2,2,3}(h,\varepsilon) $, the convergence of the approximation sequence $ \sigma_{n(\varepsilon)}^2 \to \zeta(1 - \zeta) $ implies
\begin{align*}
I_{2,2,3}(h,\varepsilon)
&\leq C(\phi) h \sup_{\zeta \in [c, 1 - c]} \left| \sigma_{n(\varepsilon)}(\zeta)^2 - \zeta(1 - \zeta) \right|.
\end{align*}
We observe that, the mollified noise coefficient $ \sigma_{n(\varepsilon)}(\cdot) $ is chosen such that, for sufficiently small $\varepsilon>0$, 
$$
\sigma_{n(\varepsilon)}(\zeta) = \sqrt{\zeta(1 - \zeta)} \quad \text{for all } \zeta \in [c, 1 - c],
$$
then for sufficiently small $ \varepsilon $, the approximation error satisfies
$$
\sup_{\zeta \in [c, 1 - c]} \left| \sigma_{n(\varepsilon)}(\zeta)^2 - \zeta(1 - \zeta) \right| = 0.
$$

Finally, we estimate the term $ I_3(h,\varepsilon) $. By applying integration by parts, we rewrite the expression as
\begin{align*}
I_3(h,\varepsilon)
= \frac{\varepsilon}{4} F_{1,\delta(\varepsilon)}^2 
\mathbb{E} \left( 
\int_0^{t+h} \langle \Delta \phi, S(t+h - s) \Sigma_{n(\varepsilon)}(\rho_{\varepsilon}(s)) \rangle ds 
- \int_0^t \langle \Delta \phi, S(t - s) \Sigma_{n(\varepsilon)}(\rho_{\varepsilon}(s)) \rangle ds 
\right)^2,
\end{align*}
where the function $ \Sigma_{n(\varepsilon)} \colon [0,1] \to \mathbb{R} $ is defined by
$$
\Sigma_{n(\varepsilon)}(\zeta) := \int_0^{\zeta} \sigma_{n(\varepsilon)}'(\zeta')^2 d\zeta'.
$$

Applying the triangle inequality and splitting the difference of integrals, we estimate:
\begin{align*}
I_3(h,\varepsilon)
&\lesssim \varepsilon \delta(\varepsilon)^{-2d} 
\mathbb{E} \left( \int_0^t \langle \Delta \phi, \left( S(t+h - s) - S(t - s) \right) \Sigma_{n(\varepsilon)}(\rho_{\varepsilon}(s)) \rangle ds \right)^2 \\
&\quad + \varepsilon \delta(\varepsilon)^{-2d} 
\mathbb{E} \left( \int_t^{t+h} \langle \Delta \phi, S(t+h - s) \Sigma_{n(\varepsilon)}(\rho_{\varepsilon}(s)) \rangle ds \right)^2.
\end{align*}

Since $ \Sigma_{n(\varepsilon)} $ is uniformly Lipschitz, and the heat semigroup is strongly continuous, both terms are controlled similarly. Using boundedness of $ \Delta \phi $ and standard smoothing properties of $ S(t) $, we obtain:
\begin{align*}
I_3(h,\varepsilon) 
\lesssim C(\phi) \varepsilon \delta(\varepsilon)^{-2d} 
\| \sigma_{n(\varepsilon)}' \|_{L^{\infty}([0,1])}^4 h^2.
\end{align*}

Combining the bounds on $ I_1(h,\varepsilon) $, $ I_2(h,\varepsilon) $, and $ I_3(h,\varepsilon) $, we conclude the proof.

\end{proof}

For the case of Dean-Kawasaki equation, the identity result can be obtained by using the same approach. 
\begin{theorem}
Let $\sigma(\zeta)=\sqrt{\zeta}$. Assume that the initial condition $\rho_0$ satisfies Assumption \ref{Assump-initial-2} and the regularization sequence $(\sigma_n(\cdot))_{n \geq 1}$ satisfies Assumption \ref{Assump-sigman-sqrt} with $\sigma_n\rightarrow\sigma$ in $C^1_{loc}((0,\infty))$. Let $c$ denote the lower bound of the initial data, as specified in Assumption \ref{Assump-initial-2}. Let $\phi \in C^{\infty}(\mathbb{T}^d)$, and define $\rho_{\varepsilon}^{\phi} := \langle \rho_{\varepsilon}, \phi \rangle$. The corresponding fluctuation field is given by
\begin{align*}
\bar{\rho}_{\varepsilon}^{1,\phi} := \varepsilon^{-1/2}(\rho_{\varepsilon}^{\phi} - \bar{\rho}^{\phi}).
\end{align*}
Then, for every $t > h > 0$, the following estimate holds:
\begin{align*}
\left| \frac{1}{h} \mathbb{E} \left( \bar{\rho}_{\varepsilon}^{1,\phi}(t+h) - \bar{\rho}_{\varepsilon}^{1,\phi}(t) \right)^2 
- \left\langle \nabla \phi, \bar{\rho}(t) \nabla \phi \right\rangle \right|
\leq C(\phi, \rho_0, t) \cdot \mathrm{Error}(h, \varepsilon, \delta(\varepsilon), n(\varepsilon)),
\end{align*}
where the error term is given by
\begin{align*}
\mathrm{Error}(h, \varepsilon, \delta(\varepsilon), n(\varepsilon)) =\ & h 
+ \varepsilon \delta(\varepsilon)^{-2d} \|\sigma_{n(\varepsilon)}'\|_{L^{\infty}([0,\infty))}^4 h +\varepsilon \delta(\varepsilon)^{-d-2}. 
\end{align*}
\end{theorem}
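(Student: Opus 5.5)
The proof follows the argument for the case $\sigma(\zeta)=\sqrt{\zeta(1-\zeta)}$ almost line by line, with $[0,1]$ replaced by $[0,\infty)$ and Assumptions~\ref{Assump-initial-1}, \ref{Assump-sigman} replaced by Assumptions~\ref{Assump-initial-2}, \ref{Assump-sigman-sqrt}. First, I will establish the analogue of Lemma~\ref{lem-LLN-regular}: apply It\^o's formula to $\|\rho_\varepsilon-\bar\rho\|_{L^2(\mathbb{T}^d)}^2$ and use the bound $|\sigma_{n}\sigma_n'|\le c$ together with $|\sigma_n|\le c$ from Assumption~\ref{Assump-sigman-sqrt}. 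The Itô correction term $\varepsilon F_{3,\delta(\varepsilon)}\|\sigma_{n(\varepsilon)}(\rho_\varepsilon)\|_{L^1}$ is controlled by $c\,\varepsilon\delta(\varepsilon)^{-d-2}$. The Stratonovich correction is a divergence term $\frac{\varepsilon}{2}F_{1,\delta}\nabla\cdot(\sigma_n'(\rho_\varepsilon)^2\nabla\rho_\varepsilon)$, which is dissipative when tested against $\rho_\varepsilon-\bar\rho$, up to a cross term with $\nabla\bar\rho$ absorbed by Young's inequality. This yields $\sup_t\mathbb{E}\|\rho_\varepsilon(t)-\bar\rho(t)\|_{L^2}^2\lesssim \varepsilon\delta(\varepsilon)^{-d-2}$. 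Mass preservation replaces the upper bound $\rho\le 1$ wherever $L^1$ control is needed.

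Next, I will write the mild formulation of $\bar\rho^{1}_\varepsilon$ and decompose the increment over $[0,t]$ and $[t,t+h]$. Expanding the square produces the same three terms $I_1,I_2,I_3$ as before. The martingale cross terms vanish by It\^o isometry. The drift term $I_3$ is handled by Cauchy--Schwarz: I expect it to give $\varepsilon\delta^{-2d}\|\sigma'_n\|^4_{L^\infty}h^2$ after integrating by parts into $\Sigma_{n}(\rho_\varepsilon)=\int_0^{\rho_\varepsilon}(\sigma_n')^2$ and using the Lipschitz bound $\|\Sigma_n'\|_\infty\le\|\sigma_n'\|_\infty^2$ together with $F_{1,\delta}\lesssim\delta^{-d}$. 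The term $I_1\lesssim h^2 t$ follows exactly as before, since it only uses $|\sigma_n|\le c$. For $I_2$ I will use the same splitting $I_{2,1}+I_{2,2}+h\langle\bar\rho(t)|\nabla\phi|^2\rangle$. The subterms are bounded by the Lipschitz constants of $\sigma_n$ and $\sigma_n^2$ combined with the law of large numbers estimate, and by the time regularity of $\bar\rho$, which is smooth by parabolic regularity. Dividing by $h$ then gives the stated error.

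The main obstacle is the term $I_{2,2,3}=h\int|\sigma_{n(\varepsilon)}(\bar\rho(t))^2-\bar\rho(t)|$ and, more generally, the use of $\sigma_n$ only along $\bar\rho$. The maximum principle gives $c\le\bar\rho(t)\le\|\rho_0\|_{L^\infty}=:K$. I will choose the regularization so that $\sigma_{n}(\zeta)=\sqrt\zeta$ on $[c,K]$ for $n$ large; this is compatible with Lemma~\ref{lem-sigma}, since $\sigma_n'$ is only required to be compactly supported, so $\sigma_n$ can be flattened beyond $2K$. With this choice $I_{2,2,3}=0$, and the constants in $I_{2,1,1,3}$ and $I_{2,2,2}$ depend only on $\|\sigma'\|_{L^\infty([c,K])}\lesssim c^{-1/2}$. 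A secondary point is that $\rho_\varepsilon$ itself is unbounded above. This is why only global Lipschitz bounds $\|\sigma_n'\|_{L^\infty([0,\infty))}$ and $\|\sigma_n\sigma_n'\|_\infty\le c$ are used on terms involving $\rho_\varepsilon$; both are finite by Assumption~\ref{Assump-sigman-sqrt}, and they enter the error exactly as in the statement.
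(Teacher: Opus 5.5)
Your outline is the paper's route: for $\sigma(\zeta)=\sqrt{\zeta}$ the paper only says that the $\sqrt{\zeta(1-\zeta)}$ argument carries over. Your one addition is correct and genuinely needed: the maximum principle gives $c\le\bar\rho\le K:=\|\rho_0\|_{L^\infty}$, so taking $\sigma_n=\sqrt{\zeta}$ on $[c,K]$ kills $I_{2,2,3}$.

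The final step, ``dividing by $h$ then gives the stated error'', fails, and the obstructions are already in the argument you are reproducing. The most serious one is the noise covariance. For $W_\delta=\sum_k (m_\delta*e_k)B^k$, It\^o's isometry gives
\begin{equation*}
I_2=\mathbb{E}\int_t^{t+h}\big\|m_{\delta(\varepsilon)}*\big(\sigma_{n(\varepsilon)}(\rho_\varepsilon(s))\,\nabla S(t+h-s)\phi\big)\big\|_{L^2(\mathbb{T}^d)}^2\,ds .
\end{equation*}
Your splitting $I_{2,1}+I_{2,2}+h\langle\bar\rho(t),|\nabla\phi|^2\rangle$ is built on the unmollified norm instead. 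In Section 6 the paper does keep the mollifier, which is where $I_{4,2}\lesssim\delta(\varepsilon)^2$ comes from. Removing it costs, with $g=\sqrt{\bar\rho(t)}\,\nabla\phi$,
\begin{equation*}
\|g\|_{L^2}^2-\|m_\delta*g\|_{L^2}^2=\sum_k\big(1-\hat m(\delta k)^2\big)|\hat g(k)|^2 ,
\end{equation*}
which is of order $\delta^2\|\nabla g\|_{L^2}^2$. No piece of $\mathrm{Error}$ controls this.

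In fact the bound cannot hold as stated. Fix $\varepsilon$ and let $h\to0$ in the weak formulation. The left-hand side tends to $\big|\mathbb{E}\|m_\delta*(\sigma_n(\rho_\varepsilon(t))\nabla\phi)\|_{L^2}^2-\|g\|_{L^2}^2\big|$, and the right-hand side tends to $C\varepsilon\delta^{-d-2}$. Take $\delta$ fixed, $n(\varepsilon)\to\infty$ slowly and $\phi$ nonconstant. Then as $\varepsilon\to0$ the left side converges to $\|g\|^2-\|m_\delta*g\|^2>0$, while the right side tends to $0$. The same happens for $\delta(\varepsilon)=\varepsilon^{a}$ with $a>0$ small, where the left side is of order $\delta^2\gg\varepsilon\delta^{-d-2}$. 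So a term $\delta(\varepsilon)^2$ must be added to the error, or $\delta^{d+4}\lesssim\varepsilon$ assumed.

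Even ignoring the mollifier, the stated rate does not come out, for three reasons.

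First, your law of large numbers controls $\sup_t\mathbb{E}\|\rho_\varepsilon-\bar\rho\|_{L^2}^2$. But $I_{2,1,1,1},I_{2,1,1,2},I_{2,1,2},I_{2,2,1}$ all involve the first moment $\mathbb{E}\|\rho_\varepsilon-\bar\rho\|$, which is only $\lesssim(\varepsilon\delta^{-d-2})^{1/2}$. In the $I_{2,1}$ terms it is also multiplied by $\|\sigma_{n(\varepsilon)}'\|_{L^\infty([0,\infty))}$, which must diverge since $\sigma_n'\to\frac{1}{2\sqrt{\zeta}}$ locally uniformly on $(0,\infty)$. After dividing by $h$ you are left with $(1+\|\sigma_{n(\varepsilon)}'\|_{L^\infty})(\varepsilon\delta^{-d-2})^{1/2}$, which is not $\lesssim\mathrm{Error}$. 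So your claim that these constants ``enter the error exactly as in the statement'' is wrong. The $\|\sigma_n'\|$ factor can be avoided by comparing $\int\sigma_n(\rho_\varepsilon)^2|\nabla\phi|^2$ with $\int\bar\rho|\nabla\phi|^2$ via $|\sigma_n\sigma_n'|\le c$; the square root cannot.

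Second, the law of large numbers itself is not uniform in $n$. In It\^o's formula for $\|\rho_\varepsilon-\bar\rho\|_{L^2}^2$, the quadratic variation contains, besides $\varepsilon F_{3,\delta}\int\sigma_n(\rho_\varepsilon)^2$, the term $\varepsilon F_{1,\delta}\int\sigma_n'(\rho_\varepsilon)^2|\nabla\rho_\varepsilon|^2$. This cancels the dissipation of the Stratonovich correction exactly. The surviving cross term $\varepsilon F_{1,\delta}\int\sigma_n'(\rho_\varepsilon)^2\nabla\rho_\varepsilon\cdot\nabla\bar\rho$ must then be absorbed by $\|\nabla(\rho_\varepsilon-\bar\rho)\|_{L^2}^2$. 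That leaves $\varepsilon\delta^{-d}\|\sigma_n'\|_{L^\infty}^2+(\varepsilon\delta^{-d}\|\sigma_n'\|_{L^\infty}^2)^2$, which is $n$-dependent.

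Third, you drop the cross term between the drift and the martingale increment on $[t,t+h]$. No martingale argument removes it, because the drift depends on $\rho_\varepsilon(s)$ for $s>t$. Cauchy--Schwarz only bounds it, after dividing by $h$, by $(\varepsilon\delta^{-2d}\|\sigma_n'\|_{L^\infty}^4h)^{1/2}$.

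Your argument therefore gives qualitative convergence in a suitable joint limit, but not the stated estimate.
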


\section{Fluctuating hydrodynamics with irregular coefficients: asymptotic behaviors}\label{sec-6}
In this section, we study the fluctuating hydrodynamic SPDEs with an irregular coefficient:
\begin{equation}
d\rho_{\varepsilon} = \frac{1}{2}\Delta\rho_{\varepsilon}\,dt - \varepsilon^{1/2}\nabla\cdot(\sigma(\rho_{\varepsilon})\circ dW_{\delta(\varepsilon)}).
\end{equation}
As a concrete example, we first focus on the case $\sigma(\zeta) = \sqrt{\zeta}$. By computing the Stratonovich-to-It\^o corrections \cite{FG24}, we can rewrite the equation in It\^o form: 
\begin{equation}\label{SPDE-irregular-sec-6}
d\rho_{\varepsilon} = \frac{1}{2}\Delta\rho_{\varepsilon}\,dt - \varepsilon^{1/2}\nabla\cdot(\sigma(\rho_{\varepsilon})\,dW_{\delta(\varepsilon)}) + \frac{\varepsilon}{2}F_{1,\delta(\varepsilon)}\nabla\cdot(\sigma'(\rho_{\varepsilon})^2\nabla\rho_{\varepsilon})\,dt.
\end{equation}

We begin by establishing a law of large numbers result for the family $\{\rho_\varepsilon\}_{\varepsilon>0}$.

\begin{lemma}\label{LLN-irregular}
Let $\sigma(\zeta)=\sqrt{\zeta}$. Assume that $\rho_0$ satisfies Assumption \ref{Assump-initial-2}. Then, under the scaling regime $$\lim_{\varepsilon\rightarrow0}\varepsilon\delta(\varepsilon)^{-d-2}=0,$$ for every $\delta' > 0$ we have
\begin{align*}
\lim_{\varepsilon \rightarrow 0} \mathbb{P} \left( \|\rho_{\varepsilon} - \bar{\rho}\|_{L^{\infty}([0,T]; L^1(\mathbb{T}^d))} > \delta' \right) = 0.
\end{align*}
\end{lemma}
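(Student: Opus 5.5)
We will prove the law of large numbers through the kinetic formulation combined with the $L^1$-contraction and stability theory of \cite{FG24}. Since $\sigma(\zeta)=\sqrt{\zeta}$ is not Lipschitz, the $L^2$ It\^o argument of Lemma~\ref{lem-LLN-regular} does not apply directly. We therefore proceed by regularization and stability, in three steps.

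\textbf{Step 1 (uniform bounds).} From Definition~\ref{def-kineticsolution}, mass is preserved by \eqref{preservation-mass}. Applying It\^o's formula to the entropy $\int \rho_\varepsilon\log\rho_\varepsilon$ (justified at the kinetic level by testing \eqref{MC kenitic solution} with $\psi(x,\zeta)=\log\zeta$ suitably truncated) gives
\begin{align*}
\sup_{t\in[0,T]}\mathbb{E}\int_{\mathbb{T}^d}\rho_\varepsilon\log\rho_\varepsilon(t)+\mathbb{E}\int_0^T\int_{\mathbb{T}^d}|\nabla\sqrt{\rho_\varepsilon}|^2
\le C(\rho_0,T)\big(1+\varepsilon F_{3,\delta(\varepsilon)}\big).
\end{align*}
Here the It\^o correction contributes $\tfrac{\varepsilon}{2}F_{3,\delta}\int\sigma^2(\rho)\,\partial_\zeta\log\zeta\big|_{\zeta=\rho}=\tfrac{\varepsilon}{2}F_{3,\delta}$, and we have $F_{3,\delta}\lesssim\delta^{-d-2}$. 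Under the scaling regime this bound is uniform in $\varepsilon$. We also obtain a uniform time-regularity estimate for $\rho_\varepsilon$ in $W^{\alpha,1}([0,T];H^{-s})$ from the equation: the martingale term carries a factor $\varepsilon^{1/2}F_{1,\delta}^{1/2}$, and the correction term a factor $\varepsilon F_{1,\delta}\le\varepsilon\delta^{-d}$.

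\textbf{Step 2 (tightness and identification).} Combining $\nabla\sqrt{\rho_\varepsilon}\in L^2$ with the time regularity via an Aubin--Lions--Simon argument yields tightness of the laws of $\rho_\varepsilon$ in $L^1([0,T];L^1(\mathbb{T}^d))$. For any limit point, the stochastic term vanishes in $L^2(\Omega)$ since $\varepsilon F_{1,\delta}\|\sqrt{\rho_\varepsilon}\|^2_{L^2}\le\varepsilon\delta^{-d}\|\rho_0\|_{L^1}\to0$. The Stratonovich correction and the $F_{3}$-term vanish because $\varepsilon\delta^{-d-2}\to0$; the correction $\tfrac{\varepsilon}{2}F_1\nabla\cdot(\sigma'(\rho)^2\nabla\rho)=\tfrac{\varepsilon}{8}F_1\Delta\log\rho$ is handled by testing against $\psi$ supported away from $\zeta=0$, as in the kinetic formulation. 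Hence every limit point is a weak solution of the heat equation with datum $\rho_0$. By uniqueness the limit is $\bar\rho$, and the convergence holds in probability in $L^1_{t,x}$.

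\textbf{Step 3 (upgrade to $L^\infty_tL^1_x$).} To pass to the supremum in time, we use that the $\chi_\varepsilon$ are equicontinuous in time in a weak sense: from \eqref{MC kenitic solution}, the mapping $t\mapsto\langle\chi_\varepsilon(t),\psi\rangle$ is H\"older uniformly in $\varepsilon$ for fixed $\psi$. Together with the uniform entropy bound, which gives uniform integrability, and $L^1$-compactness at every rational time, this yields convergence in $C([0,T];L^1)$ through the kinetic $L^1$ contraction/continuity argument of \cite{FG24}. Alternatively, one can couple $\rho_\varepsilon$ with the regularized solutions of Section~\ref{sec-5}, using Lemma~\ref{lem-LLN-regular} and the stability estimate of \cite{FG24} in $L^\infty_tL^1_x$.

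The main obstacle is Step 3, together with the control of the singular correction near $\zeta=0$. The kinetic measure $p_\varepsilon$ need not vanish, and its mass near $\zeta=0$ must be controlled uniformly. We will control it using the vanishing property \eqref{control CKM VI} and a cutoff $\eta_\beta$ in $\zeta$ of the type introduced in the key-ideas section, sending $\beta\to0$ after $\varepsilon\to0$.
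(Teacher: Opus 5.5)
Your route differs from the paper's. The paper proves this lemma only by deferring to the Moser-iteration argument of \cite{DFG}, whose point is an $L^\infty$-type control of $\rho_\varepsilon-\bar\rho$; together with $\rho_0\geq c$, this keeps the solution away from the degeneracy at $\zeta=0$ with high probability. Your approach goes through entropy bounds, compactness and identification of the limit. Your entropy computation in Step 1 is correct: the Stratonovich correction cancels and leaves $\tfrac{\varepsilon}{2}F_{3,\delta}$. But Steps 1--2 give at best $\rho_\varepsilon\to\bar\rho$ in probability in $L^1([0,T]\times\mathbb{T}^d)$. Step 3, the only place where the $L^\infty([0,T];L^1)$ norm of the statement is reached, does not go through, because neither ingredient you invoke is available.

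First, the uniform H\"older continuity of $t\mapsto\langle\chi_\varepsilon(t),\psi\rangle$ does not follow from \eqref{MC kenitic solution}. The term $\int_s^t\int\partial_\zeta\psi\,dp_\varepsilon$ has no a priori modulus in time; only the (weighted) total mass of $p_\varepsilon$ is controlled.

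Second, and more seriously, nothing gives $L^1$-compactness of $\{\rho_\varepsilon(t)\}_\varepsilon$ at a fixed time. The entropy dissipation controls $\nabla\sqrt{\rho_\varepsilon}$ only in $L^2$ in space-time. At fixed $t$ you therefore have only uniform integrability, i.e. weak $L^1$-compactness, and $\rho_\varepsilon(t)\rightharpoonup\bar\rho(t)$ says nothing about $\|\rho_\varepsilon(t)-\bar\rho(t)\|_{L^1}$.

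Your fallbacks do not close this gap either:
\begin{itemize}
\item The $L^1$-contraction of \cite{FG24} compares two kinetic solutions of the \emph{same} equation with the same noise. Neither $\bar\rho$ nor the $\sigma_n$-regularized solution solves the same equation as $\rho_\varepsilon$.
\item Lemma~\ref{lem-LLN-regular} gives only fixed-time $L^2$ control of the regularized equation. That does not keep the regularized solution in the region where $\sigma_n=\sqrt{\cdot}$, so it cannot be identified with $\rho_\varepsilon$. The missing $L^\infty$ closeness is exactly what the Moser iteration supplies.
\end{itemize}

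Two further corrections:
\begin{itemize}
\item \eqref{control CKM VI} is a vanishing condition as $\zeta\to\infty$ for fixed $\varepsilon$ and gives nothing near $\zeta=0$. The uniform estimate you need near zero follows by testing \eqref{MC kenitic solution} with $\psi_\beta(\zeta)=\int_0^\zeta\beta^{-1}I_{[\beta/2,\beta]}$ and using mass conservation, which gives
\begin{align*}
\beta^{-1}\mathbb{E}\,p_\varepsilon\big(\mathbb{T}^d\times[\beta/2,\beta]\times[0,T]\big)\lesssim\beta+\varepsilon F_{3,\delta(\varepsilon)}T.
\end{align*}
\item The time regularity in Step 1 must be stated for truncations of $\rho_\varepsilon$ away from $0$. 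The term $\tfrac{\varepsilon}{8}F_{1,\delta}\Delta\log\rho_\varepsilon$ is not known to be integrable.
\end{itemize}

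A direct repair of Step 3, which also makes Steps 1--2 unnecessary, is a weak--strong comparison with the smooth $\bar\rho\geq c$. Test the kinetic formulation with the time-dependent $\psi=g_\gamma(\zeta-\bar\rho(x,t))$, where $g_\gamma$ is a smoothed Heaviside with $g_\gamma'=\kappa_\gamma\geq0$ supported in $[0,\gamma]$. Cut off at large $\zeta$; this is where \eqref{control CKM VI} is actually useful. Then:
\begin{itemize}
\item The test function lives in $\{\zeta\geq\bar\rho\geq c\}$, so the singularity at $0$ never enters.
\item The time-derivative, drift and kinetic-measure terms combine, via \eqref{control KM}, into at most $-\tfrac12\int\kappa_\gamma(\rho_\varepsilon-\bar\rho)|\nabla(\rho_\varepsilon-\bar\rho)|^2$.
\item The noise-induced terms (the $F_3$-term, the correction term, and BDG for the martingale, absorbed into this dissipation) are $O(\gamma^{-1}\varepsilon\delta(\varepsilon)^{-d-2})$.
\end{itemize}
Since $\|\rho_\varepsilon-\bar\rho\|_{L^1}=2\int(\rho_\varepsilon-\bar\rho)_+$ by mass conservation, you get
\begin{align*}
\mathbb{E}\sup_{t\in[0,T]}\|\rho_\varepsilon(t)-\bar\rho(t)\|_{L^1(\mathbb{T}^d)}\lesssim\gamma+\gamma^{-1}\varepsilon\delta(\varepsilon)^{-d-2}.
\end{align*}
Optimizing in $\gamma$ gives the lemma, with a rate. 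Alternatively, a pathwise relative-entropy estimate followed by Pinsker's inequality also works.
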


\begin{proof}
The proof proceeds by adapting the arguments developed in \cite[Proposition~3.5 and Theorems~3.9, 3.10]{DFG}, where the law of large numbers and the central limit theorem for the fluctuating SSEP are established with convergence in probability, based on the Moser iteration technique. For more general coefficients, including the Dean-Kawasaki case, we refer the reader to \cite{GWZ24} and \cite{CF23}. 
\end{proof}
Let $\chi_{\varepsilon} = I_{\{0 < \zeta < \rho_{\varepsilon}\}}$ denote the kinetic function associated with $\rho_\varepsilon$, and let $p_{\varepsilon}$ be the corresponding kinetic measure.

\begin{lemma}\label{boundedness-kineticmeasure}
Let $\sigma(\zeta)=\sqrt{\zeta}$. Assume that $\rho_0$ satisfies Assumption \ref{Assump-initial-2}. For every $\varepsilon > 0$, let $p_{\varepsilon}$ be the kinetic measure associated with $\rho_\varepsilon$. Then we have the estimate
\begin{align*}
\mathbb{E}\left(h^{-1 + \varepsilon'} p_{\varepsilon}\left([t, t + h] \times \mathbb{T}^d \times \left[\frac{h^{1 - \varepsilon'}}{2}, h^{1 - \varepsilon'}\right]\right)\right)^2 \leq C(\rho_0)\varepsilon F_{1,\delta(\varepsilon)},
\end{align*}
for all $t \in [0, T]$, $\varepsilon' \in (0, 1/4)$, $\varepsilon \in (0, 1)$, and $h \in (0, 1)$. 
\end{lemma}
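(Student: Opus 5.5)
We test the kinetic formulation \eqref{MC kenitic solution} against a function of $\zeta$ alone that isolates the window $[\beta/2,\beta]$, with $\beta:=h^{1-\varepsilon'}$. Take $\psi(x,\zeta)=\Psi_\beta(\zeta)$ with $\Psi_\beta(\zeta):=\int_0^\zeta \eta'_\beta(r)\,dr$, where $\eta_\beta$ is the cutoff from the introduction. Then $\partial_\zeta\psi=\eta'_\beta\ge 0$ is supported in $[\beta/2,\beta]$, and we fix $\eta_\beta$ so that $\eta_\beta'\gtrsim\beta^{-1}$ on a fixed fraction of this window; to cover the whole window we use a slightly enlarged window or a finite covering. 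Strictly, $\psi$ is not compactly supported in $(0,\infty)$, so we first approximate it by $\Psi_\beta(\zeta)\kappa_M(\zeta)$ with $\kappa_M$ a cutoff at large $\zeta$, and remove the cutoff using the vanishing-at-infinity property \eqref{control CKM VI}. Since $\psi$ does not depend on $x$, all terms involving $\nabla\psi$ disappear: the diffusion term $\nabla\rho_\varepsilon\cdot(\nabla\psi)(x,\rho_\varepsilon)$ and the Itô-correction term with $F_{1,\delta}$. Integrating \eqref{MC kenitic solution} between $t$ and $t+h$ then gives
\begin{align*}
\int_t^{t+h}\!\!\int\!\!\int \eta'_\beta\,dp_\varepsilon
=&-\Big[\int\!\!\int\chi_\varepsilon\Psi_\beta\Big]_t^{t+h}
-\varepsilon^{1/2}\int_t^{t+h}\!\!\int_{\mathbb{T}^d}\Psi_\beta(\rho_\varepsilon)\nabla\cdot(\sigma(\rho_\varepsilon)dW_{\delta(\varepsilon)})\\
&+\frac{\varepsilon}{2}F_{3,\delta(\varepsilon)}\int_t^{t+h}\!\!\int_{\mathbb{T}^d}\rho_\varepsilon\,\eta'_\beta(\rho_\varepsilon).
\end{align*}
The left side dominates $c\beta^{-1}p_\varepsilon([t,t+h]\times\mathbb{T}^d\times[\beta/2,\beta])$. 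Hence the quantity in the statement, $\beta^{-1}p_\varepsilon(\cdots)$, is bounded by the right side, and it remains to bound the second moment of each of the three terms.

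\emph{Boundary term.} We have $\int\chi_\varepsilon\Psi_\beta\,d\zeta=\Phi_\beta(\rho_\varepsilon)$, where $\Phi_\beta(\rho):=\int_0^\rho\Psi_\beta$ and $\Phi_\beta'=\Psi_\beta\in[0,1]$. So the boundary term is controlled by the $L^1$ increment of $\rho_\varepsilon$, but that only gives an $O(1)$ bound, which is too weak. Instead we write the time increment of $\Phi_\beta(\rho_\varepsilon)$ through Itô's formula. This is exactly the kinetic identity again, so the bracket term and the $p_\varepsilon$-term must be handled together. The cleaner route is therefore to work with $\psi=\Psi_\beta$ and read the identity as a formula for the $p_\varepsilon$-mass, estimating the remaining terms directly.

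\emph{Martingale term.} By Itô's isometry, and after integrating by parts in $x$ (giving $\Psi_\beta'(\rho_\varepsilon)\nabla\rho_\varepsilon\,\sigma(\rho_\varepsilon)=\eta'_\beta(\rho_\varepsilon)\sqrt{\rho_\varepsilon}\nabla\rho_\varepsilon$), its second moment is at most
\[
\varepsilon F_{1,\delta}\,\mathbb{E}\int_t^{t+h}\!\!\int \eta'_\beta(\rho_\varepsilon)^2\rho_\varepsilon|\nabla\rho_\varepsilon|^2
\lesssim \varepsilon F_{1,\delta}\,\beta^{-1}\,\mathbb{E}\int\!\!\int \delta_0(\zeta-\rho_\varepsilon)\zeta|\nabla\rho_\varepsilon|^2\,\eta'_\beta(\zeta)\,d\zeta.
\]
By \eqref{control KM}, the last expression is at most $\varepsilon F_{1,\delta}\,\mathbb{E}\,\beta^{-1}p_\varepsilon$ restricted to the window. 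Writing $X:=\beta^{-1}p_\varepsilon([t,t+h]\times\mathbb{T}^d\times[\beta/2,\beta])$, this produces a quadratic inequality $\mathbb{E}X^2\lesssim(\text{other terms})+\varepsilon F_{1,\delta}\,\mathbb{E}X$, and Young's inequality absorbs the $\mathbb{E}X$ on the right.

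\emph{Itô-correction term.} This is at most $\varepsilon F_{3,\delta}\,h\,\beta\cdot\beta^{-1}=\varepsilon F_{3,\delta}h$, and its square is bounded by $\varepsilon F_{1,\delta}$ once we use $h$ small and the relation between $F_3$ and $F_1$ (equivalently, $h\beta^{-1}\le h^{\varepsilon'}$).

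The main obstacle is the boundary term $[\Phi_\beta(\rho_\varepsilon)]_t^{t+h}$, which has no obvious smallness. The approach I would try first is to subtract the analogous quantity for $\bar\rho$. Since $\bar\rho\ge c>\beta$, we have $\Phi_\beta(\bar\rho)=\bar\rho-\text{const}$, so that
\[
\big[\Phi_\beta(\rho_\varepsilon)-\Phi_\beta(\bar\rho)\big]_t^{t+h}=\big[(\Phi_\beta(\rho_\varepsilon)-\rho_\varepsilon)-(\Phi_\beta(\bar\rho)-\bar\rho)\big]_t^{t+h}+\big[\rho_\varepsilon-\bar\rho\big]_t^{t+h}.
\]
Mass conservation \eqref{preservation-mass} kills the last bracket after integration over $\mathbb{T}^d$. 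What remains involves $\int(\rho_\varepsilon-\Phi_\beta(\rho_\varepsilon))$, which is at most $\beta|\{\rho_\varepsilon<\beta\}|$. Its second moment is controlled by $\beta^2\,\mathbb{P}$-mass of the set $\{\rho_\varepsilon\le\beta\}$, and dividing by the normalisation gives an order-$1$ quantity times the small probability that $\rho_\varepsilon$ drops below $c$. We would aim to bound this probability by $\varepsilon F_{1,\delta}$ using Lemma~\ref{LLN-irregular} together with an $L^2$ estimate analogous to Lemma~\ref{lem-LLN-regular}.
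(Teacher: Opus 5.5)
\textbf{Verdict: the proposal has a genuine gap.} It never bounds the boundary term, and the route you sketch for it cannot reach the stated right-hand side.

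\textbf{Where you agree with the paper.} Your setup matches the paper's proof. The paper tests the kinetic formulation with $\varphi_h(\zeta)=\int_0^\zeta 2h^{-1+\varepsilon'}I_{[\beta/2,\beta]}(\zeta')\,d\zeta'$, the piecewise-linear version of your $\Psi_\beta$. It gets the same three terms: boundary, It\^o correction with $F_{3,\delta}$, and martingale. It bounds the martingale term directly: $I_{\{\beta/2<\rho_\varepsilon<\beta\}}\rho_\varepsilon|\nabla\rho_\varepsilon|^2\le 4\beta^2|\nabla\sqrt{\rho_\varepsilon}|^2$ together with \eqref{L2-es} gives $C(\rho_0)\varepsilon F_{1,\delta}$. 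This avoids your absorption step, which would also need $\mathbb{E}X^2<\infty$ a priori and the same window on both sides; your $\operatorname{supp}\eta_\beta'$ is the enlarged window, not $[\beta/2,\beta]$.

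\textbf{The boundary term.} You do not divide by any further normalisation: the identity is already normalised by $\eta_\beta'\sim\beta^{-1}$. Since $\Phi_\beta(\rho)=\rho-\int_0^\rho(1-\eta_\beta)$ with $0\le\int_0^\rho(1-\eta_\beta)\lesssim\beta$, mass conservation \eqref{preservation-mass} gives the deterministic bound $\bigl|[\int\Phi_\beta(\rho_\varepsilon)]_t^{t+h}\bigr|\lesssim\beta$. So this term contributes $O(\beta^2)=O(h^{2-2\varepsilon'})$ to $\mathbb{E}X^2$: small in $h$, but not in $\varepsilon$. Your plan to do better has two problems.
\begin{itemize}
\item It needs $\beta<c$, which fails for $h$ near $1$, and the lemma is claimed for all $h\in(0,1)$.
\item It needs a rate of order $\varepsilon F_{1,\delta}$ for $\mathbb{E}|\{\rho_\varepsilon<\beta\}|$, and nothing in the paper supplies one. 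Lemma~\ref{LLN-irregular} has no rate. Lemma~\ref{lem-LLN-regular} concerns the regularised equation, and its rate is $\varepsilon\delta^{-d-2}\sim\varepsilon F_{3,\delta}$, not $\varepsilon\delta^{-d}\sim\varepsilon F_{1,\delta}$.
\end{itemize}

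\textbf{This smallness is unavailable uniformly in $h$.} Assumption~\ref{Assump-initial-2} only bounds $\rho_0$ from below, so $[\beta/2,\beta]$ can lie strictly inside the range of a nonconstant $\bar\rho$ on $[t,t+h]$. Condition (3) of Definition~\ref{def-kineticsolution} then gives $X\ge\beta^{-1}\int_t^{t+h}\int_{\mathbb{T}^d}I_{\{\beta/2\le\rho_\varepsilon\le\beta\}}|\nabla\rho_\varepsilon|^2$. Lemma~\ref{LLN-irregular}, weak lower semicontinuity and Fatou then yield $\liminf_{\varepsilon\to0}\mathbb{E}X\ge\beta^{-1}\int_t^{t+h}\int_{\mathbb{T}^d}I_{\{\beta/2\le\bar\rho\le\beta\}}|\nabla\bar\rho|^2>0$. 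Meanwhile $C(\rho_0)\varepsilon F_{1,\delta}\to0$, for instance with $\delta$ fixed.

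\textbf{The It\^o-correction step is also unjustified.} That term is $\lesssim\varepsilon F_{3,\delta}h$ deterministically. The $\beta$'s already cancel, so $h\beta^{-1}\le h^{\varepsilon'}$ plays no role. The claim $(\varepsilon F_{3,\delta}h)^2\lesssim\varepsilon F_{1,\delta}$ would require $\varepsilon\delta^{-d-4}h^2\lesssim1$, which is not assumed.

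\textbf{What the decomposition actually gives.} Honestly carried out, it yields $\mathbb{E}X^2\lesssim\beta^2+\varepsilon F_{1,\delta}+(\varepsilon F_{3,\delta}h)^2$. The paper's proof is in the same position. It bounds the boundary and It\^o-correction terms only by a constant via \cite[Proposition 4.5]{FG24}, implicitly using that $\varepsilon F_{3,\delta}$ stays bounded. So it establishes $C(\rho_0)(1+\varepsilon F_{1,\delta(\varepsilon)})$, not the stated right-hand side. You were right that an $O(1)$ bound on the boundary term does not match the statement. The fix is that the statement must carry such an additive term, not that the boundary term is small.
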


\begin{proof}
Let us consider the test function
$$
\varphi_h(\zeta) = \int_0^{\zeta} 2 h^{-1 + \varepsilon'} I_{\left[\frac{h^{1 - \varepsilon'}}{2}, h^{1 - \varepsilon'}\right]}(\zeta') \, d\zeta'. 
$$
This choice localizes the kinetic formulation in a narrow band of small densities. Following the approach in \cite[Proposition 4.5]{FG24}, we estimate
\begin{align*}
&\mathbb{E}\left(2 h^{-1 + \varepsilon'} p_{\varepsilon}\left([t, t + h] \times \mathbb{T}^d \times \left[\frac{h^{1 - \varepsilon'}}{2}, h^{1 - \varepsilon'}\right]\right)\right)^2 \\
&\quad\lesssim \mathbb{E}\left( \int_{\mathbb{R}_+ \times \mathbb{T}^d} \chi_{\varepsilon} \varphi_h \, dx d\zeta \Big|_{s = t}^{s = t + h} \right)^2 \\
&\qquad+ \varepsilon^2 h^{-2 + 2\varepsilon'} \, \mathbb{E}\left( \int_t^{t + h} \int_{\mathbb{T}^d} I_{\left\{ \frac{h^{1 - \varepsilon'}}{2} < \rho_{\varepsilon} < h^{1 - \varepsilon'} \right\}} F_{3,\delta(\varepsilon)} \sigma(\rho_{\varepsilon})^2 \, dx ds \right)^2 \\
&\qquad+ \varepsilon \, \mathbb{E} \left( \int_t^{t + h} \int_{\mathbb{T}^d} \varphi_h(\rho_{\varepsilon}) \nabla \cdot (\sigma(\rho_{\varepsilon}) \, dW_{\delta(\varepsilon)}) \right)^2.
\end{align*}

The first two terms on the right-hand side are controlled by \cite[Proposition 4.5]{FG24}, yielding
\begin{align*}
\mathbb{E}\left( \int_{\mathbb{R}_+ \times \mathbb{T}^d} \chi_{\varepsilon} \varphi_h \, dx d\zeta \Big|_{s = t}^{s = t + h} \right)^2 
+ \varepsilon^2 h^{-2 + 2\varepsilon'} \, \mathbb{E}\left( \int_t^{t + h} \int_{\mathbb{T}^d} I_{\left\{ \frac{h^{1 - \varepsilon'}}{2} < \rho_{\varepsilon} < h^{1 - \varepsilon'} \right\}} F_3 \sigma(\rho_{\varepsilon})^2 \, dx ds \right)^2 
\leq C.
\end{align*}

To estimate the stochastic integral, we apply It\^o's isometry. Observe that the derivative of $\varphi_h(\rho_{\varepsilon})$ is given by
$$
\varphi_h'(\rho_{\varepsilon}) = 2 h^{-1 + \varepsilon'} I_{\left[\frac{h^{1 - \varepsilon'}}{2}, h^{1 - \varepsilon'}\right]}(\rho_{\varepsilon}),
$$
and hence,
\begin{align*}
&\varepsilon \, \mathbb{E} \left( \int_t^{t + h} \int_{\mathbb{T}^d} \varphi_h(\rho_{\varepsilon}) \nabla \cdot (\sigma(\rho_{\varepsilon}) \, dW_{\delta(\varepsilon)}) \right)^2 \\
&\quad= \varepsilon F_{1,\delta(\varepsilon)} \, \mathbb{E} \int_t^{t + h} \int_{\mathbb{T}^d} \left| \varphi_h'(\rho_{\varepsilon}) \nabla \rho_{\varepsilon} \sigma(\rho_{\varepsilon}) \right|^2 dx ds \\
&\quad\leq 4 \varepsilon F_{1,\delta(\varepsilon)} h^{-2 + 2\varepsilon'} \, \mathbb{E} \int_t^{t + h} \int_{\left\{ \frac{h^{1 - \varepsilon'}}{2} < \rho_{\varepsilon} < h^{1 - \varepsilon'} \right\}} |\nabla \rho_{\varepsilon}|^2 \sigma(\rho_{\varepsilon})^2 \, dx ds.
\end{align*}
Using the bound $\sigma(\zeta) = \sqrt{\zeta}$ and the inequality $|\nabla \rho_{\varepsilon}|^2 \leq 4 |\nabla \sqrt{\rho_{\varepsilon}}|^2 \rho_{\varepsilon}$, we deduce
$$
I_{\{\frac{h^{1-\varepsilon'}}{2}<\rho_{\varepsilon}<h^{1-\varepsilon'}\}}|\nabla \rho_{\varepsilon}|^2 \sigma(\rho_{\varepsilon})^2 \lesssim I_{\{\frac{h^{1-\varepsilon'}}{2}<\rho_{\varepsilon}<h^{1-\varepsilon'}\}}\rho_{\varepsilon}^2 |\nabla \sqrt{\rho_{\varepsilon}}|^2 \lesssim h^{2(1 - \varepsilon')} |\nabla \sqrt{\rho_{\varepsilon}}|^2.
$$
Thus,
\begin{align*}
\varepsilon \, \mathbb{E} \left( \int_t^{t + h} \int_{\mathbb{T}^d} \varphi_h(\rho_{\varepsilon}) \nabla \cdot (\sigma(\rho_{\varepsilon}) \, dW_{\delta(\varepsilon)}) \right)^2 
\lesssim \varepsilon F_{1,\delta(\varepsilon)}\mathbb{E} \int_t^{t + h} \|\nabla \sqrt{\rho_{\varepsilon}}\|_{L^2(\mathbb{T}^d)}^2 \, ds \leq C(\rho_0)\varepsilon F_{1,\delta(\varepsilon)}.
\end{align*}

Combining the estimates for all three terms yields the desired bound, and the proof is complete.
\end{proof}

\begin{theorem}\label{asymptotic-behavior}
Let $\sigma(\zeta)=\sqrt{\zeta}$. Assume that $\rho_0$ satisfies Assumption \ref{Assump-initial-2}. Let $\rho_{\varepsilon}$ be the renormalized kinetic solution of \eqref{SPDE-irregular}. Then, under the scaling regime $$\lim_{\varepsilon\rightarrow0}\varepsilon\delta(\varepsilon)^{-d-2}=0,$$ for every nonnegative test function $\phi \in C^{\infty}(\mathbb{T}^d)$ and every $t \in [0,T]$, the following asymptotic fluctuation identity holds:
\begin{align*}
	\lim_{\varepsilon \to 0} \liminf_{h \to 0} \frac{1}{h} \, \mathbb{E} \left( \left\langle \varepsilon^{-1/2}(\rho_{\varepsilon} - \bar{\rho})(t + h) - \varepsilon^{-1/2}(\rho_{\varepsilon} - \bar{\rho})(t), \phi \right\rangle \right)^2 = \langle \sigma(\bar{\rho}(t))^2 \nabla \phi, \nabla \phi \rangle.
\end{align*}
\end{theorem}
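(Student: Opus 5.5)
The plan is to follow the decomposition sketched in the introduction. First, the deterministic part contributes $\varepsilon^{-1}h^{-1}\langle \bar\rho(t+h)-\bar\rho(t),\phi\rangle^2\le C(\rho_0,\phi)\varepsilon^{-1}h$, since $\bar\rho\in W^{1,\infty}([0,T];L^2)$. This vanishes as $h\to0$ for fixed $\varepsilon$. The cross term is then controlled by Cauchy--Schwarz once the stochastic part is shown to have an $h$-order second moment, so everything reduces to computing $\frac{1}{h}\mathbb{E}\langle\rho_\varepsilon(t+h)-\rho_\varepsilon(t),\phi\rangle^2$ multiplied by $\varepsilon^{-1}$.

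For this I will use the kinetic formulation \eqref{MC kenitic solution}. I fix $\beta=\beta(h)=h^{1-\varepsilon'}$ and take the test function $\psi(x,\zeta)=\phi(x)\eta_\beta(\zeta)$, which is admissible since it is compactly supported in $\zeta>0$ after a truncation at large $\zeta$ removed by \eqref{control CKM VI}. Writing $\langle\chi_\varepsilon(t+h)-\chi_\varepsilon(t),\phi\eta_\beta\rangle$ via the equation produces five pieces:
\begin{enumerate}
\item a drift $-\frac12\int_t^{t+h}\int\eta_\beta(\rho_\varepsilon)\nabla\rho_\varepsilon\cdot\nabla\phi$;
\item the martingale $-\varepsilon^{1/2}\int_t^{t+h}\int\phi\,\eta_\beta(\rho_\varepsilon)\nabla\cdot(\sigma(\rho_\varepsilon)dW_{\delta(\varepsilon)})$;
\item the Itô-correction terms, of order $\varepsilon F_{1,\delta}$ and $\varepsilon F_{3,\delta}$ with an $\eta'_\beta$ factor;
\item the kinetic-measure term $\int_t^{t+h}\int\phi\,\eta_\beta'\,dp_\varepsilon$;
\item the remainder $\langle\chi_\varepsilon(t+h)-\chi_\varepsilon(t),(1-\eta_\beta)\phi\rangle$.
\end{enumerate}
The remainder is bounded by $\|\phi\|_\infty\beta$, because $\int_0^\infty(1-\eta_\beta)\chi\,d\zeta\le\beta$ pointwise. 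After squaring, dividing by $\varepsilon h$ and letting $h\to0$, this gives $\varepsilon^{-1}h^{1-2\varepsilon'}\to0$.

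By Itô's isometry, the martingale contributes
\[
\frac1h\int_t^{t+h}\mathbb{E}\sum_k\Big(\int\nabla(\phi\eta_\beta(\rho_\varepsilon))\sigma(\rho_\varepsilon)f_{\delta,k}\Big)^2ds .
\]
This splits into a main part $\frac1h\int_t^{t+h}\mathbb{E}\int\eta_\beta^2\sigma(\rho_\varepsilon)^2|\nabla\phi|^2F_{1,\delta}$ plus terms containing $\eta'_\beta\nabla\rho_\varepsilon\sigma$. The latter are handled exactly as in Lemma~\ref{boundedness-kineticmeasure}, using $|\nabla\rho_\varepsilon|^2\sigma^2\lesssim\beta^2|\nabla\sqrt{\rho_\varepsilon}|^2$ on the band, together with \eqref{L2-es}.

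For the main part, I let $h\to0$ using time continuity of $\rho_\varepsilon$ in $L^1$; taking a liminf is why the statement uses $\liminf$. Then, as $\varepsilon\to0$, Lemma~\ref{LLN-irregular} gives $\sigma(\rho_\varepsilon(t))^2=\rho_\varepsilon(t)\to\bar\rho(t)$ in $L^1$ in probability. Uniform integrability from mass conservation then upgrades this to convergence of the expectation, to $\langle\sigma(\bar\rho)^2\nabla\phi,\nabla\phi\rangle$ after normalizing $F_{1,\delta}$. Here $\eta_\beta\to1$ as well, and $\bar\rho\ge c$ makes the small-density region negligible. Cross terms between the martingale and the $dt$ pieces are handled by Cauchy--Schwarz.

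The remaining $dt$ terms must be shown to have second moment $o(\varepsilon h)$.
\begin{itemize}
\item For the drift, Cauchy--Schwarz gives $h\,\mathbb{E}\int_t^{t+h}\|\nabla\rho_\varepsilon\|_{L^1}^2$. Since $\|\nabla\rho_\varepsilon\|_{L^1}\le 2\|\rho_\varepsilon\|_{L^1}^{1/2}\|\nabla\sqrt{\rho_\varepsilon}\|_{L^2}$, this is $h\,o_h(1)$ by absolute continuity of the integrable function in \eqref{L2-es}. After dividing by $\varepsilon h$ it vanishes as $h\to0$.
\item The Itô-correction pieces are treated likewise, using that $\sigma'(\zeta)^2=1/(4\zeta)$ is bounded by $1/(2\beta)$ on the support of $\eta_\beta$.
\item The kinetic-measure term equals $\beta^{-1}p_\varepsilon([t,t+h]\times\mathbb{T}^d\times[\beta/2,\beta])$ up to constants. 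Its second moment is bounded by Lemma~\ref{boundedness-kineticmeasure} only by $C\varepsilon F_{1,\delta}$, which is not small in $h$.
\end{itemize}

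The main obstacle is this last term. I would first try to exploit its time localization: $p_\varepsilon$ is a measure of locally finite mass, and $\mathbb{E}\,p_\varepsilon([t,t+h]\times\cdot)\to0$ as $h\to0$. I would then need to refine the estimate in Lemma~\ref{boundedness-kineticmeasure} so that its right-hand side carries a factor $o(h)$. Concretely, I would redo that proof with the first two terms estimated over $[t,t+h]$ rather than globally, and use that $\bar\rho\ge c$ makes the density band $\{\rho_\varepsilon<\beta\}$ of vanishing probability as $\varepsilon\to0$. If only $O(\varepsilon h)$ comes out, the choice of the exponent $\varepsilon'$, balanced against the remainder bound $\beta^2/(\varepsilon h)$, is where I would adjust.
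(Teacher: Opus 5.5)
Your architecture matches the paper's: the cutoff $\eta_\beta$ with $\beta=h^{1-\varepsilon'}$, the five terms of the kinetic formulation plus the $O(\beta)$ remainder, and identification of the limit through the martingale and Lemma~\ref{LLN-irregular}. However, your argument does not close at the kinetic-measure term $\varepsilon^{-1/2}\int_t^{t+h}\int\phi\,\eta_\beta'\,dp_\varepsilon$, and the repair you sketch cannot work as described.

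Lemma~\ref{boundedness-kineticmeasure} only gives $\frac1h\mathbb{E}(\cdot)^2\lesssim F_{1,\delta(\varepsilon)}h^{-1}$, which diverges, whereas you need $o(1)$ as $h\to0$ at fixed $\varepsilon$. The smallness of $\{\rho_\varepsilon<\beta\}$ coming from $\bar\rho\ge c$ and Lemma~\ref{LLN-irregular} appears only in the outer limit $\varepsilon\to0$. It cannot compensate a bound that blows up in the inner limit.

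What is missing is a vanishing-at-zero statement, at fixed $\varepsilon$, for the entropy dissipation on the shrinking band $\{\beta(h)/2\le\rho_\varepsilon\le\beta(h)\}$. The paper compares $p_\varepsilon$ there with the parabolic defect $\frac12\delta_0(\zeta-\rho_\varepsilon)|\nabla\rho_\varepsilon|^2$ and uses $\beta^{-1}|\nabla\rho_\varepsilon|^2\le 4|\nabla\sqrt{\rho_\varepsilon}|^2$ on the band. It then invokes \cite[Lemma 7]{FG23} to get $\liminf_{h\to0}h^{-1/2}\,\mathbb{E}\int_t^{t+h}\int_{\mathbb{T}^d}I_{\{\beta(h)/2\le\rho_\varepsilon\le\beta(h)\}}|\nabla\sqrt{\rho_\varepsilon}|^2\,dx\,ds=0$. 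This subsequential vanishing is what the $\liminf_{h\to0}$ in the statement reflects. It does not come from time continuity of the martingale term, which would give a genuine limit.

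Your instinct to localize Lemma~\ref{boundedness-kineticmeasure} on $[t,t+h]$ is actually a cleaner way to reach the same quantity, because \eqref{control KM} only bounds $p_\varepsilon$ from below by the defect. Test with $\varphi_h$. Mass conservation makes the boundary term $O(\beta)$ instead of $O(1)$, and the $F_{3,\delta}$ term is $O(\varepsilon F_{3,\delta}h)$. The It\^o isometry bounds the martingale's second moment by a constant times $\varepsilon\,\mathbb{E}\int_t^{t+h}\int_{\mathbb{T}^d}I_{\{\beta/2\le\rho_\varepsilon\le\beta\}}|\nabla\sqrt{\rho_\varepsilon}|^2\,dx\,ds$. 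That reduces the kinetic-measure term to showing this quantity is $o(h)$, but this vanishing input is exactly what your proposal does not supply.

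Two further steps fail as written, although both are repairable. First, It\^o's isometry gives $\frac1h\mathbb{E}\int_t^{t+h}\|m_{\delta(\varepsilon)}\ast(\nabla(\phi\eta_\beta(\rho_\varepsilon))\sigma(\rho_\varepsilon))\|_{L^2(\mathbb{T}^d)}^2\,ds$, since $\sum_k\langle g,f_{\delta,k}\rangle^2=\|m_\delta\ast g\|_{L^2}^2$ for symmetric $m_\delta$. Your main part $F_{1,\delta}\int\eta_\beta^2\sigma(\rho_\varepsilon)^2|\nabla\phi|^2$ is off by the factor $F_{1,\delta}\simeq\delta^{-d}\to\infty$, which cannot be normalized away. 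With the correct expression, the mollification error is $O(\delta(\varepsilon)^2)$, as in the paper's $I_{4,2}$.

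Second, consider the $F_{1,\delta}$ correction term. Using $\sigma'(\rho_\varepsilon)^2\le(2\beta)^{-1}$ and then $\|\nabla\rho_\varepsilon\|_{L^1}\lesssim\|\nabla\sqrt{\rho_\varepsilon}\|_{L^2}$ leaves $\varepsilon F_{1,\delta}^2\beta^{-2}\,\mathbb{E}\int_t^{t+h}\|\nabla\sqrt{\rho_\varepsilon}\|_{L^2}^2\,ds$. For $\beta=h^{1-\varepsilon'}$ this is generically of order $h^{-1+2\varepsilon'}\to\infty$. You must instead keep $\sigma'(\rho)^2|\nabla\rho|=|\nabla\sqrt{\rho}|/(2\sqrt{\rho})\lesssim\beta^{-1/2}|\nabla\sqrt{\rho}|$ on the support of $\eta_\beta$. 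Alternatively, integrate by parts against $\int_{\beta/2}^{\rho}\sigma'(\zeta)^2\,d\zeta$ as the paper does for $I_5$. Either way this costs only $\beta^{-1}$ after squaring.

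Your drift and remainder estimates are fine. Your Cauchy--Schwarz bound for the drift is simpler than the paper's integration by parts.
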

\begin{proof}
We begin by estimating the fluctuation of the centered and rescaled process. Observe that
\begin{align*}
&\frac{1}{h} \, \mathbb{E} \left( \left\langle \varepsilon^{-1/2}(\rho_{\varepsilon} - \bar{\rho})(t + h) - \varepsilon^{-1/2}(\rho_{\varepsilon} - \bar{\rho})(t), \phi \right\rangle \right)^2\\ 
\leq& \frac{1}{h} \, \mathbb{E} \left( \left\langle \varepsilon^{-1/2} \rho_{\varepsilon}(t + h) - \varepsilon^{-1/2} \rho_{\varepsilon}(t), \phi \right\rangle \right)^2 + \frac{1}{h} \, \mathbb{E} \left( \left\langle \varepsilon^{-1/2} \bar{\rho}(t + h) - \varepsilon^{-1/2} \bar{\rho}(t), \phi \right\rangle \right)^2 \\
&+\frac{2}{h}\mathbb{E}\left( \left\langle \varepsilon^{-1/2} \rho_{\varepsilon}(t + h) - \varepsilon^{-1/2} \rho_{\varepsilon}(t), \phi \right\rangle\left\langle \varepsilon^{-1/2} \bar{\rho}(t + h) - \varepsilon^{-1/2} \bar{\rho}(t), \phi \right\rangle \right)\\
\leq& \frac{1}{h} \, \mathbb{E} \left( \left\langle \varepsilon^{-1/2} \rho_{\varepsilon}(t + h) - \varepsilon^{-1/2} \rho_{\varepsilon}(t), \phi \right\rangle \right)^2 
+ C(\rho_0) \varepsilon^{-1} (h+h^{1/2}),
\end{align*}
where the last inequality follows from the smoothness of $\bar{\rho}$ in time and the preservation of mass for $\rho_{\varepsilon}$. 

Next, fix $\beta \in (0, 1/2)$ (to be specified later), and let $\eta_{\beta}$ be a smooth truncation function such that $\eta_{\beta}(\zeta) = 1$ for $\zeta \geq \beta$, $\eta_{\beta}(\zeta) = 0$ for $\zeta \leq \beta/2$, and satisfying the derivative bound $\eta'_{\beta} \lesssim \beta^{-1} I_{\left[\frac{\beta}{2}, \beta\right]}$. Let $\chi_{\varepsilon} = I_{\{0 < \zeta < \rho_{\varepsilon}\}}$ denote the kinetic function. Then we decompose
\begin{align}\label{expansion}
&\varepsilon^{-1/2} \left\langle \rho_{\varepsilon}(t + h) - \rho_{\varepsilon}(t), \phi \right\rangle\notag\\ 
=& \varepsilon^{-1/2} \left\langle \chi_{\varepsilon}(t + h) - \chi_{\varepsilon}(t), \eta_{\beta} \phi \right\rangle_{L^2(\mathbb{T}^d\times\mathbb{R}_+)}
+ \varepsilon^{-1/2} \left\langle \chi_{\varepsilon}(t + h) - \chi_{\varepsilon}(t), (1 - \eta_{\beta}) \phi \right\rangle_{L^2(\mathbb{T}^d\times\mathbb{R}_+)}.
\end{align}

We estimate the contribution from the region where $\rho_{\varepsilon} \leq \beta$. Note that
\begin{align*}
\frac{1}{h} \, \mathbb{E} \left( \varepsilon^{-1/2} \left\langle \chi_{\varepsilon}(t + h) - \chi_{\varepsilon}(t), (1 - \eta_{\beta}) \phi \right\rangle_{L^2(\mathbb{T}^d\times\mathbb{R}_+)} \right)^2 &\leq \frac{1}{h} \varepsilon^{-1} \, \mathbb{E} \left( \left\langle \rho_{\varepsilon}(t + h) \wedge \beta - \rho_{\varepsilon}(t) \wedge \beta, \phi \right\rangle \right)^2 \\
&\leq C(\phi) \varepsilon^{-1} \beta^2 h^{-1}.
\end{align*}

We now apply the kinetic formulation to the regular part. Using the kinetic equation, we write
\begin{align*}
&\varepsilon^{-1/2} \left\langle \chi_{\varepsilon}(t + h) - \chi_{\varepsilon}(t), \eta_{\beta} \phi \right\rangle_{L^2(\mathbb{T}^d\times\mathbb{R}_+)}\\ 
&= -\varepsilon^{-1/2} \frac{1}{2}\int_t^{t + h} \int_{\mathbb{T}^d} \nabla \phi \, \eta_{\beta}(\rho_{\varepsilon}) \nabla \rho_{\varepsilon} \, dx ds 
- \varepsilon^{-1/2} \int_t^{t + h} \int_{\mathbb{T}^d \times \mathbb{R}_+} \phi \, \eta_{\beta}' \, dp_{\varepsilon} \\
&\quad+ \frac{1}{2} \varepsilon^{1/2} \int_t^{t + h} \int_{\mathbb{T}^d} \phi \, \eta_{\beta}'(\rho_{\varepsilon}) \, \sigma(\rho_{\varepsilon})^2 F_{3,\delta(\varepsilon)} \, dx ds \\
&\quad- \int_t^{t + h} \int_{\mathbb{T}^d} \phi \, \eta_{\beta}(\rho_{\varepsilon}) \, \nabla \cdot (\sigma(\rho_{\varepsilon}) \, dW_{\delta(\varepsilon)}(s)) \, dx \\
&\quad- \frac{\varepsilon^{1/2}}{2} F_{1,\delta(\varepsilon)} \int_t^{t + h} \int_{\mathbb{T}^d} \nabla\phi \, \eta_{\beta}(\rho_{\varepsilon})  \cdot  \sigma'(\rho_{\varepsilon})^2 \nabla \rho_{\varepsilon}  \, dx ds \\
&=: I_1 + I_2 + I_3 + I_4 + I_5.
\end{align*}
Each of the terms $ I_1 $ through $ I_5 $ will now be estimated separately. 

\medskip
\noindent\textbf{Estimate of $ I_1 $.} By integration by parts, we write:
\begin{align*}
\frac{1}{h} \, \mathbb{E} I_1^2 
&= \frac{1}{h} \, \varepsilon^{-1} \, \mathbb{E} \left( \frac{1}{2}\int_t^{t+h} \int_{\mathbb{T}^d} \nabla \cdot (\nabla \phi \, \eta_{\beta}(\rho_{\varepsilon})) \, \rho_{\varepsilon} \, dx ds \right)^2 \\
&= \frac{1}{h} \, \varepsilon^{-1} \, \mathbb{E} \left( \frac{1}{2}\int_t^{t+h} \int_{\mathbb{T}^d} \left[ \Delta \phi \, \eta_{\beta}(\rho_{\varepsilon}) + \nabla \phi \, \eta_{\beta}'(\rho_{\varepsilon}) \, \nabla \rho_{\varepsilon} \right] \rho_{\varepsilon} \, dx ds \right)^2.
\end{align*}
We split the resulting expression into the smooth term and the one involving $\eta_\beta'$:
\begin{align*}
\frac{1}{h} \, \mathbb{E} I_1^2 
&\lesssim C(\phi)\|\rho_0\|_{L^1(\mathbb{T}^d)}^2 h \varepsilon^{-1} \\
&\quad+ C(\phi) \frac{1}{h} \varepsilon^{-1} \, \mathbb{E} \left( \int_t^{t+h} \int_{\mathbb{T}^d} \nabla \phi \, \eta_{\beta}'(\rho_{\varepsilon}) \, 2 \sqrt{\rho_{\varepsilon}} \, \nabla \sqrt{\rho_{\varepsilon}} \, dx ds \right)^2.
\end{align*}

For the last term, we use the fact that $ \eta_{\beta}' \lesssim \beta^{-1} $ and $ \sqrt{\rho_{\varepsilon}} \leq \sqrt{\beta} $ in the support of $ \eta_{\beta}' $, yielding
\begin{align*}
\frac{1}{h} \, \mathbb{E} I_1^2 
&\lesssim C(\phi, \rho_0) h \varepsilon^{-1} 
+ C(\phi) \frac{1}{h} \varepsilon^{-1} \beta \, \mathbb{E} \left( \int_t^{t+h} \int_{\mathbb{T}^d} \left| \nabla \sqrt{\rho_{\varepsilon}} \right| dx ds \right)^2 \\
&\lesssim C(\phi, \rho_0) h \varepsilon^{-1} + C(\phi, \rho_0) \varepsilon^{-1} \beta.
\end{align*}

\medskip
\noindent\textbf{Estimate of $ I_2 $.} For the term involving the kinetic measure, we apply Lemma~\ref{boundedness-kineticmeasure}. Fix $ \varepsilon' \in (0, 1/4) $, and choose $ \beta = \beta(h) = h^{1 - \varepsilon'} $. Then:
\begin{align*}
\frac{1}{h} \, \mathbb{E} I_2^2 
&\leq-\varepsilon^{-1/2}\frac{1}{h}\mathbb{E}\Big(\int^{t+h}_t\int_{\mathbb{T}^d}\phi\frac{1}{\beta}I_{\{[\beta/2,\beta]\}}(\rho_{\varepsilon})2|\nabla\sqrt{\rho_{\varepsilon}}|^2\rho_{\varepsilon}dxds\Big)^2 \\
&\leq-\varepsilon^{-1/2}\frac{1}{h}\Big(\mathbb{E}\int^{t+h}_t\int_{\mathbb{T}^d}\phi\frac{1}{\beta}I_{\{[\beta/2,\beta]\}}(\rho_{\varepsilon})2|\nabla\sqrt{\rho_{\varepsilon}}|^2\rho_{\varepsilon}dxds\Big)^2 \\
&\leq C(\phi) \varepsilon^{-1}  \Big(\frac{1}{h^{1/2}}\mathbb{E}\int^{t+h}_t\int_{\mathbb{T}^d}\int_{[h^{1-\varepsilon'}/2,h^{1-\varepsilon'}]}|\nabla\sqrt{\rho_{\varepsilon}}|^2dm_{\varepsilon}\Big)^2,
\end{align*}
where $dm_{\varepsilon}:=\delta_{0}(\rho_{\varepsilon}-\zeta)d\zeta dxds$. We also note that by the distributional inequality $p_{\varepsilon}\geq\frac{1}{2}\delta_0(\rho_{\varepsilon}-\zeta)|\nabla\rho_{\varepsilon}|^2$ and Lemma \ref{boundedness-kineticmeasure}, the second order moment $\mathbb{E}\Big(\int^{t+h}_t\int_{\mathbb{T}^d}\phi\frac{1}{\beta}I_{\{[\beta/2,\beta]\}}(\rho_{\varepsilon})2|\nabla\sqrt{\rho_{\varepsilon}}|^2\rho_{\varepsilon}dxds\Big)^2$ in the first line is finite. Furthermore, with the help of \cite[Lemma 7]{FG23}, we have that 
\begin{align*}
\liminf_{h\rightarrow0}\frac{1}{h}\mathbb{E}I_2^2\leq C(\phi) \varepsilon^{-1}  \liminf_{h\rightarrow0}\Big(\frac{1}{h^{1/2}}\mathbb{E}\int^{t+h}_t\int_{\mathbb{T}^d}\int_{[h^{1-\varepsilon'}/2,h^{1-\varepsilon'}]}|\nabla\sqrt{\rho_{\varepsilon}}|^2dm_{\varepsilon}\Big)^2=0. 	
\end{align*}

\medskip
\noindent\textbf{Estimate of $ I_3 $.} For the It\^o correction term, we directly estimate:
\begin{align*}
\frac{1}{h} \, \mathbb{E} I_3^2 
&= \frac{1}{4} \varepsilon \frac{1}{h} \, \mathbb{E} \left( \int_t^{t+h} \int_{\mathbb{T}^d} \phi \, \eta_{\beta}'(\rho_{\varepsilon}) \, \sigma(\rho_{\varepsilon})^2 F_{3,\delta(\varepsilon)} \, dx ds \right)^2 \\
&\lesssim C(\phi) \varepsilon \|F_{3,\delta(\varepsilon)}\|_{L^1(\mathbb{T}^d)}^2 h.
\end{align*}

\noindent\textbf{Estimate of $ I_4 $.} We apply It\^o's isometry to obtain the identity
\begin{align*}
	\frac{1}{h}\mathbb{E}I_4^2=&\frac{1}{h}\mathbb{E}\int_{t}^{t+h}\|\eta_{\delta(\varepsilon)} \ast (\nabla(\phi \eta_{\beta}(\rho_{\varepsilon})) \sigma(\rho_{\varepsilon}))\|_{L^2(\mathbb{T}^d)}^2 \, ds \\
	=&\frac{1}{h}\mathbb{E}\int_{t}^{t+h} \Big[ \|\eta_{\delta(\varepsilon)} \ast (\nabla(\phi \eta_{\beta}(\rho_{\varepsilon})) \sigma(\rho_{\varepsilon}))-\eta_{\delta(\varepsilon)} \ast (\nabla(\phi \eta_{\beta}(\bar{\rho})) \sigma(\bar{\rho}))\|_{L^2(\mathbb{T}^d)}^2 \Big] ds \\
	&+ \frac{1}{h} \int_{t}^{t+h} \Big[ \|\eta_{\delta(\varepsilon)} \ast (\nabla(\phi \eta_{\beta}(\bar{\rho})) \sigma(\bar{\rho}))-\nabla(\phi \eta_{\beta}(\bar{\rho})) \sigma(\bar{\rho})\|_{L^2(\mathbb{T}^d)}^2 \Big] ds \\
	&+ \frac{1}{h} \int_{t}^{t+h} \Big[ \|\nabla(\phi \eta_{\beta}(\bar{\rho})) \sigma(\bar{\rho})-\nabla\phi \sigma(\bar{\rho})\|_{L^2(\mathbb{T}^d)}^2 \Big] ds \\
	&+ \frac{1}{h} \int_{t}^{t+h} \|\nabla\phi \sigma(\bar{\rho})-\nabla\phi \sigma(\bar{\rho}(t))\|_{L^2(\mathbb{T}^d)}^2ds + \|\nabla\phi \sigma(\bar{\rho}(t))\|_{L^2(\mathbb{T}^d)}^2+\text{correlated terms} \\
	=:& I_{4,1} + I_{4,2} + I_{4,3} + I_{4,4} + C(\phi,\bar{\rho})(I_{4,1}^{1/2} + I_{4,2}^{1/2} + I_{4,3}^{1/2} + I_{4,4}^{1/2})+ \|\nabla\phi \sigma(\bar{\rho}(t))\|_{L^2(\mathbb{T}^d)}^2.
\end{align*}

Here we use `correlated terms' to denote the non-quadratic term in the quadratic expansion, precisely, $(a_1+...+a_n)^2=a_1^2+...+a_n^2+{\text{correlated terms}}$. These terms produce lower order terms compare to $I_{4,1} + I_{4,2} + I_{4,3} + I_{4,4}$, therefore we do not formulate the precise expressions of them, and focus on the estimates of $I_{4,1} + I_{4,2} + I_{4,3} + I_{4,4}$. 

We estimate each term in turn. For $I_{4,1}$, using Young's convolution inequality, we obtain
\begin{align*}
I_{4,1} \lesssim& \frac{1}{h} \mathbb{E} \int_t^{t+h} \|\eta_{\delta} \ast \big( \nabla(\phi \eta_{\beta}(\rho_{\varepsilon})) \sigma(\rho_{\varepsilon}) - \nabla(\phi \eta_{\beta}(\bar{\rho})) \sigma(\bar{\rho}) \big) \|_{L^2(\mathbb{T}^d)}^2 ds \\
\leq& \frac{1}{h} \mathbb{E} \int_t^{t+h} \| \nabla(\phi \eta_{\beta}(\rho_{\varepsilon})) \sigma(\rho_{\varepsilon}) - \nabla(\phi \eta_{\beta}(\bar{\rho})) \sigma(\bar{\rho}) \|_{L^2(\mathbb{T}^d)}^2 ds \\
\leq& C(\phi) \frac{1}{h} \mathbb{E} \int_t^{t+h} \| \eta_{\beta}(\rho_{\varepsilon}) \sigma(\rho_{\varepsilon}) - \eta_{\beta}(\bar{\rho}) \sigma(\bar{\rho}) \|_{L^2(\mathbb{T}^d)}^2 ds \\
&+ C(\phi) \frac{1}{h} \mathbb{E} \int_t^{t+h} \| \eta_{\beta}'(\rho_{\varepsilon}) \nabla\rho_{\varepsilon} \sigma(\rho_{\varepsilon}) - \eta_{\beta}'(\bar{\rho}) \nabla\bar{\rho} \sigma(\bar{\rho}) \|_{L^2(\mathbb{T}^d)}^2 ds \\
=:& I_{4,1,1} + I_{4,1,2}.
\end{align*}

To bound $I_{4,1,1}$, we apply the triangle inequality:
\begin{align*}
I_{4,1,1} \leq& C(\phi) \frac{1}{h} \mathbb{E} \int_t^{t+h} \| I_{\{[\beta/2,\beta]\}}(\rho_{\varepsilon}) \sigma(\rho_{\varepsilon}) \|_{L^2(\mathbb{T}^d)}^2 ds + C(\phi) \frac{1}{h} \mathbb{E} \int_t^{t+h} \| \sigma(\rho_{\varepsilon}) - \sigma(\bar{\rho}) \|_{L^2(\mathbb{T}^d)}^2 ds \\
&+ C(\phi) \frac{1}{h} \mathbb{E} \int_t^{t+h} \| I_{\{[\beta/2,\beta]\}}(\bar{\rho}) \sigma(\bar{\rho}) \|_{L^2(\mathbb{T}^d)}^2 ds \\
\leq& C(\phi)\beta + C(\phi) \frac{1}{h} \mathbb{E} \int_t^{t+h} \| \rho_{\varepsilon} - \bar{\rho} \|_{L^1(\mathbb{T}^d)} ds.
\end{align*}

By Lemma~\ref{LLN-irregular}, for every $\delta'>0$,
\begin{align*}
\frac{1}{h} \mathbb{E} \int_t^{t+h} \|\rho_{\varepsilon} - \bar{\rho}\|_{L^1(\mathbb{T}^d)} ds \leq& \int_{\{\|\rho_{\varepsilon} - \bar{\rho}\|_{L^{\infty}([0,T];L^1(\mathbb{T}^d))} > \delta'\}} \|\rho_{\varepsilon} - \bar{\rho}\|_{L^{\infty}([0,T];L^1(\mathbb{T}^d))} d\mathbb{P} \\
&+ \int_{\{\|\rho_{\varepsilon} - \bar{\rho}\|_{L^{\infty}([0,T];L^1(\mathbb{T}^d))} \leq \delta'\}} \|\rho_{\varepsilon} - \bar{\rho}\|_{L^{\infty}([0,T];L^1(\mathbb{T}^d))} d\mathbb{P} \\
\leq& 2\|\rho_0\|_{L^1(\mathbb{T}^d)} \mathbb{P}\left( \|\rho_{\varepsilon} - \bar{\rho}\|_{L^{\infty}([0,T];L^1(\mathbb{T}^d))} > \delta' \right) + \delta' \to \delta',
\end{align*}
as $\varepsilon \to 0$. Hence,
$$
\lim_{\varepsilon \to 0} \liminf_{h \to 0} \frac{1}{h} \mathbb{E} \int_t^{t+h} \|\rho_{\varepsilon} - \bar{\rho}\|_{L^1(\mathbb{T}^d)} ds = 0.
$$

For $I_{4,1,2}$, by the structure of $\eta_{\beta}'$ and the regularity of $\rho_{\varepsilon}$ and $\bar{\rho}$, we deduce
\begin{align*}
I_{4,1,2} \leq& C(\phi) \frac{1}{h} \mathbb{E} \int_t^{t+h} \left\| I_{\{[\beta/2,\beta]\}}(\rho_{\varepsilon}) \nabla\sqrt{\rho_{\varepsilon}} \right\|_{L^2(\mathbb{T}^d)}^2 ds + C(\phi) \frac{1}{h} \mathbb{E} \int_t^{t+h} \left\| I_{\{[\beta/2,\beta]\}}(\bar{\rho}) \nabla\sqrt{\bar{\rho}} \right\|_{L^2(\mathbb{T}^d)}^2 ds.
\end{align*}
Using the choice $\beta = h^{1-\varepsilon'}$ and similarly by \cite[Lemma 7]{FG23}, we find
\begin{align*}
\liminf_{h \to 0} \Big[ C(\phi) \frac{1}{h} \mathbb{E} \int_t^{t+h} &\left\| I_{\{[h^{1-\varepsilon'}/2, h^{1-\varepsilon'}]\}}(\rho_{\varepsilon}) \nabla\sqrt{\rho_{\varepsilon}} \right\|_{L^2(\mathbb{T}^d)}^2 ds\\
 &+ C(\phi) \frac{1}{h} \mathbb{E} \int_t^{t+h} \left\| I_{\{[h^{1-\varepsilon'}/2, h^{1-\varepsilon'}]\}}(\bar{\rho}) \nabla\sqrt{\bar{\rho}} \right\|_{L^2(\mathbb{T}^d)}^2 ds \Big] = 0.
\end{align*}

For $I_{4,2}$, using the chain rule, we observe that
\begin{align*}
\nabla[\nabla(\phi\eta_{\beta}(\bar{\rho}))\sigma(\bar{\rho})]
=& \nabla[\nabla\phi\,\eta_{\beta}(\bar{\rho})\,\sigma(\bar{\rho}) + \phi\,\eta_{\beta}'(\bar{\rho})\,\nabla\bar{\rho}\,\sigma(\bar{\rho})] \\
\leq& \nabla^{\otimes2}\phi\,\eta_{\beta}(\bar{\rho})\,\sigma(\bar{\rho}) + \nabla\phi\,\eta_{\beta}'(\bar{\rho})\,\otimes\nabla\bar{\rho}\,\sigma(\bar{\rho}) + \nabla\phi\,\eta_{\beta}(\bar{\rho})\,\otimes\nabla\sigma(\bar{\rho}) \\
&+ \nabla\phi\,\eta_{\beta}'(\bar{\rho})\,\otimes\nabla\bar{\rho}\,\sigma(\bar{\rho}) + \phi\,\eta_{\beta}''(\bar{\rho})\,\nabla\bar{\rho}\otimes\nabla\bar{\rho}\,\sigma(\bar{\rho}) + \phi\,\eta_{\beta}'(\bar{\rho})\,\nabla[\nabla\bar{\rho}\sigma(\bar{\rho})].
\end{align*}
Under the assumptions $\rho_0 \geq c > 0$ and $\rho_0$ smooth, the above quantity belongs to $L^{\infty}([0,T];L^{\infty}(\mathbb{T}^d))$. Consequently,
\begin{align*}
I_{4,2} \leq& \operatorname*{ess\,sup}_{t \in [0,T]} \int_{\mathbb{T}^d} \left| \int_{\mathbb{T}^d} \left| \delta(\varepsilon)^{-d} \eta\left(\frac{x-y}{\delta(\varepsilon)}\right) \nabla(\phi\,\eta_{\beta}(\bar{\rho}))\,\sigma(\bar{\rho})(x) - \nabla(\phi\,\eta_{\beta}(\bar{\rho}))\,\sigma(\bar{\rho})(y) \right| dy \right|^2 dx \\
\lesssim& \left\| \nabla[\nabla(\phi\,\eta_{\beta}(\bar{\rho}))\,\sigma(\bar{\rho})] \right\|_{L^{\infty}([0,T];L^{\infty}(\mathbb{T}^d))}^2 \int_{\mathbb{T}^d} \left| \int_{\mathbb{T}^d} \eta_{\delta(\varepsilon)}(x-y)\,|x-y|\,dy \right|^2 dx \\
\leq& C(\rho_0)\,\delta(\varepsilon)^2.
\end{align*}

For $I_{4,3}$, using the choice $\beta = h^{1 - \varepsilon'}$, we clearly have
$$
\lim_{h \to 0} I_{4,3} = 0.
$$

For $I_{4,4}$, by the differentiability of the function $ t \mapsto \int_0^t \|\nabla\phi\,\sigma(\bar{\rho}(s))\|_{L^2(\mathbb{T}^d)}^2 ds $, we conclude that
$$
\lim_{h \to 0} I_{4,4} = 0.
$$

\bigskip

\noindent\textbf{Estimate of $ I_5 $.} By integration by parts, we obtain
\begin{align*}
I_5=& -\frac{\varepsilon^{1/2}}{2}F_{1,\delta(\varepsilon)} \int_t^{t+h} \int_{\mathbb{T}^d} \nabla\phi\,\eta_{\beta}(\rho_{\varepsilon}) \cdot \sigma'(\rho_{\varepsilon})^2\,\nabla\rho_{\varepsilon}\,dx\,ds \\
=& \frac{\varepsilon^{1/2}}{2}F_{1,\delta(\varepsilon)} \int_t^{t+h} \int_{\mathbb{T}^d} \nabla\cdot(\nabla\phi\,\eta_{\beta}(\rho_{\varepsilon})) \left( \int_0^{\rho_{\varepsilon}} \sigma'(\zeta)^2\,d\zeta \right) dx\,ds. 
\end{align*}

Hence, using the compact support of $\eta_{\beta}$ and the property $\sigma'(\zeta) \lesssim \zeta^{-1/2}$, we deduce
\begin{align*}
\frac{1}{h}\mathbb{E}(I_5)^2\lesssim& \frac{1}{h} \mathbb{E} \left( \frac{\varepsilon^{1/2}}{2}F_{1,\delta(\varepsilon)} \int_t^{t+h} \int_{\mathbb{T}^d} \nabla\cdot(\nabla\phi\,\eta_{\beta}(\rho_{\varepsilon})) \left( \int_{\beta/2}^{\rho_{\varepsilon}} \sigma'(\zeta)^2\,d\zeta \right) dx\,ds \right)^2 \\
\lesssim& \varepsilon\,F_{1,\delta(\varepsilon)}^2\,C(\phi)\left[ h\beta^{-1/2} (\log\beta - \log\frac{\beta}{2}) \mathbb{E}\|\nabla\sqrt{\rho_{\varepsilon}}\|_{L^2(\mathbb{T}^d)}^2 \right] \\
\leq& \varepsilon\,F_{1,\delta(\varepsilon)}^2\,C(\phi,\rho_0)\left[ h\beta^{-1/2} (\log\beta - \log\frac{\beta}{2}) \right].
\end{align*}

Using the choice $\beta = \beta(h) = h^{1 - \varepsilon'}$, we see that both
$$
h\beta(h)^{-1/2}\Big(\log\beta(h) - \log\frac{\beta(h)}{2}\Big) \to 0 \quad \text{and} \quad \beta(h)^2 h^{-1} \to 0, \quad \text{as } h \to 0.
$$
Return back to \eqref{expansion}, we get
\begin{align*}
&\lim_{\varepsilon\rightarrow0}\limsup_{h\rightarrow0}\frac{1}{h}\mathbb{E}\Big(\varepsilon^{-1/2}\langle\rho_{\varepsilon}(t+h)-\rho_{\varepsilon}(t),\phi\rangle\Big)^2\\
\leq&\limsup_{\varepsilon\rightarrow0}	\Big(C(\rho_0)\varepsilon F_{1,\delta(\varepsilon)}+C(\rho_0)\delta(\varepsilon)^2\Big)+\langle \sigma(\bar{\rho}(t))^2 \nabla \phi, \nabla \phi \rangle\\
=&\langle \sigma(\bar{\rho}(t))^2 \nabla \phi, \nabla \phi \rangle.
\end{align*}
Consequently, we conclude that 
\begin{align*}
&\lim_{\varepsilon\rightarrow0}\limsup_{h\rightarrow0}\Big|\frac{1}{h} \, \mathbb{E} \left( \left\langle \varepsilon^{-1/2}(\rho_{\varepsilon} - \bar{\rho})(t + h) - \varepsilon^{-1/2}(\rho_{\varepsilon} - \bar{\rho})(t), \phi \right\rangle \right)^2-\langle \sigma(\bar{\rho}(t))^2 \nabla \phi, \nabla \phi \rangle\Big|\\ 
\lesssim& \lim_{\varepsilon\rightarrow0}\limsup_{h\rightarrow0}\Big|\frac{1}{h} \, \mathbb{E} \left( \left\langle \varepsilon^{-1/2} \rho_{\varepsilon}(t + h) - \varepsilon^{-1/2} \rho_{\varepsilon}(t), \phi \right\rangle \right)^2 
+ C(\rho_0) \varepsilon^{-1} h-\langle \sigma(\bar{\rho}(t))^2 \nabla \phi, \nabla \phi \rangle\Big|\\
=&0.
\end{align*}
This completes the proof. 
\end{proof}

Similarly, for the case of Dean-Kawasaki equation, the identity result can be obtained by using the same approach.
\begin{theorem}
Let $\sigma(\zeta)=\sqrt{\zeta(1-\zeta)}$. Assume that $\rho_0$ satisfies Assumption \ref{Assump-initial-1}. Let $\rho_{\varepsilon}$ be the renormalized kinetic solution of \eqref{SPDE-irregular}. Then, under the scaling regime $$\lim_{\varepsilon\rightarrow0}\varepsilon\delta(\varepsilon)^{-d-2}=0,$$ for every test function $\phi \in C^{\infty}(\mathbb{T}^d)$ and every $t \in [0,T]$, the following asymptotic fluctuation identity holds:
\begin{align*}
	\lim_{\varepsilon \to 0} \liminf_{h \to 0} \frac{1}{h} \, \mathbb{E} \left( \left\langle \varepsilon^{-1/2}(\rho_{\varepsilon} - \bar{\rho})(t + h) - \varepsilon^{-1/2}(\rho_{\varepsilon} - \bar{\rho})(t), \phi \right\rangle \right)^2 = \langle \sigma(\bar{\rho}(t))^2 \nabla \phi, \nabla \phi \rangle.
\end{align*}
\end{theorem}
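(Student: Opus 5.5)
The plan is to mimic the proof of Theorem \ref{asymptotic-behavior}, replacing $\sigma(\zeta)=\sqrt{\zeta}$ by $\sigma(\zeta)=\sqrt{\zeta(1-\zeta)}$. The new feature is that this coefficient degenerates at both endpoints $\zeta=0$ and $\zeta=1$, so the kinetic function $\chi_\varepsilon=I_{\{0<\zeta<\rho_\varepsilon\}}$ must be truncated near both ends. First, as before, we remove the deterministic part. The cross term and $\frac1h\mathbb{E}\langle\varepsilon^{-1/2}(\bar\rho(t+h)-\bar\rho(t)),\phi\rangle^2$ are controlled by the smoothness of $\bar\rho$ and by $0\le\rho_\varepsilon\le1$. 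Since $\phi$ is no longer assumed nonnegative, we split $\phi=\phi^+-\phi^-$ with smooth pieces (or simply keep $|\phi|$ in the bounds), because positivity of $\phi$ was only used to sign the kinetic-measure term.

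Next we fix $\beta=\beta(h)=h^{1-\varepsilon'}$ and a smooth cutoff $\eta_\beta$ with $\eta_\beta=1$ on $[\beta,1-\beta]$ and $\eta_\beta=0$ on $[0,\beta/2]\cup[1-\beta/2,1]$, with $|\eta_\beta'|\lesssim\beta^{-1}$ supported on the two boundary layers $[\beta/2,\beta]\cup[1-\beta,1-\beta/2]$. We then split $\langle\chi_\varepsilon(t+h)-\chi_\varepsilon(t),\phi\rangle$ into an $\eta_\beta$-part and a $(1-\eta_\beta)$-part. The $(1-\eta_\beta)$-part is handled exactly as before, since it is bounded by increments of $\rho_\varepsilon\wedge\beta$ and $(\rho_\varepsilon-(1-\beta))^+$. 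The $\eta_\beta$-part is expanded through the kinetic formulation \eqref{MC kenitic solution} into the five terms $I_1,\dots,I_5$.

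\begin{itemize}
\item[$I_1$:] After integration by parts, the $\eta_\beta'$ contribution is controlled using the energy estimate for the analogue of $\nabla\sqrt{\rho_\varepsilon}$, namely $\mathbb{E}\int|\nabla\rho_\varepsilon|^2/(\rho_\varepsilon(1-\rho_\varepsilon))$, which is available from the entropy $\int\rho\log\rho+(1-\rho)\log(1-\rho)$ as in \cite{FG24}.
\item[$I_3$:] This term is bounded by $\varepsilon F_{3,\delta(\varepsilon)}^2h$, since $\sigma^2\le1/4$.
\item[$I_4$:] By the It\^o isometry we reproduce the splitting $I_{4,1},\dots,I_{4,4}$. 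We use Lemma \ref{LLN-irregular} (its fluctuating-SSEP version from \cite{DFG}) for $I_{4,1}$. For $I_{4,2}$ we use $\delta(\varepsilon)^2$ together with the fact that $\bar\rho$ stays in $[c,1-c]$ under Assumption \ref{Assump-initial-1}, so that $\eta_\beta(\bar\rho)=1$ for small $h$ and $I_{4,3}$ vanishes identically. Continuity of $s\mapsto\|\nabla\phi\,\sigma(\bar\rho(s))\|^2_{L^2}$ handles $I_{4,4}$.
\item[$I_5$:] Here $\sigma'(\zeta)^2=(1-2\zeta)^2/(4\zeta(1-\zeta))$, which is integrable up to logarithms on the support of $\eta_\beta$. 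The bound $\varepsilon F_{1,\delta(\varepsilon)}^2h\beta^{-1/2}\log 2\to0$ therefore persists.
\end{itemize}

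The main obstacle is the kinetic-measure term $I_2$, now supported on both boundary layers, together with the analogue of Lemma \ref{boundedness-kineticmeasure}. For the layer near $0$ we reuse the test function $\varphi_h$ verbatim. For the layer near $1$ the natural approach is the symmetry $\zeta\mapsto1-\zeta$: the function $1-\rho_\varepsilon$ solves the same equation with noise $-W_{\delta(\varepsilon)}$ and the same $\sigma$, so its kinetic measure is the reflection of $p_\varepsilon$. The estimate near $1$ then follows from the one near $0$. After that, \cite[Lemma 7]{FG23} gives the vanishing of $\liminf_{h\to0}\frac1h\mathbb{E}I_2^2$ on each layer. Some care is needed because the liminf along a common subsequence must work for both layers and for $I_{4,1,2}$ simultaneously; I would sum the nonnegative integrands first and then apply the lemma once.

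Collecting the bounds and letting $h\to0$ and then $\varepsilon\to0$, using $\varepsilon\delta(\varepsilon)^{-d-2}\to0$, gives the upper bound $\langle\sigma(\bar\rho(t))^2\nabla\phi,\nabla\phi\rangle$. The matching lower bound comes from the dominant term $\|\nabla\phi\,\sigma(\bar\rho(t))\|^2$ in $I_4$, with all other contributions vanishing in the same limits.
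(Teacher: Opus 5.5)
\textbf{Gap in the kinetic-measure term $I_2$.} Your architecture is the one the paper sketches: the two-sided cutoff $\eta_\beta$ with $\beta=h^{1-\varepsilon'}$, the expansion into $I_1,\dots,I_5$, and a separate band estimate near $\zeta=1$. However, the step you use for $I_2$ does not work, and this is a genuine gap. Write $B_\beta=[\tfrac{\beta}{2},\beta]\cup[1-\beta,1-\tfrac{\beta}{2}]$. Since $|I_2|\lesssim\|\phi\|_{L^\infty}\varepsilon^{-1/2}\beta^{-1}p_\varepsilon([t,t+h]\times\mathbb{T}^d\times B_\beta)$, you need $\mathbb{E}\big(\beta^{-1}p_\varepsilon([t,t+h]\times\mathbb{T}^d\times B_\beta)\big)^2=o(h)$.

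The analogue of Lemma~\ref{boundedness-kineticmeasure} does not give this, whether reproved near $1$ or obtained by reflection. It is an $h$-independent bound, so it only yields $\frac1h\mathbb{E}I_2^2\lesssim F_{1,\delta(\varepsilon)}h^{-1}$, and that bound blows up. Nor does \cite[Lemma 7]{FG23}. As used here, it is a statement about the parabolic defect $\int\!\!\int I_{\{\beta/2<\rho_\varepsilon<\beta\}}|\nabla\sqrt{\rho_\varepsilon}|^2$, not about $p_\varepsilon$. To transfer it to $I_2$ you would need two things:
\begin{itemize}
\item the bound $p_\varepsilon\le\frac12\delta_0(\zeta-\rho_\varepsilon)|\nabla\rho_\varepsilon|^2$ on the bands, which is the reverse of \eqref{kineticmeasure-ineq};
\item the inequality $\mathbb{E}X^2\le(\mathbb{E}X)^2$, which is Jensen in the wrong direction.
\end{itemize}
The $I_2$ estimate in the proof of Theorem~\ref{asymptotic-behavior}, to which the paper's sketch defers, makes exactly these two moves, so importing it does not close the gap. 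Summing the integrands before taking the $\liminf$ does not address it either.

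\textbf{A possible repair.} What is missing is a genuine $L^2(\Omega)$ upper bound on the band mass. Test the kinetic formulation with the $x$-independent $\varphi_h$ of Lemma~\ref{boundedness-kineticmeasure}, so that all $\nabla_x\psi$ terms drop out, and use conservation of mass. With $\Theta_h(r)=\int_0^r\varphi_h$ one has $\int\!\!\int\chi_\varepsilon\varphi_h=\int_{\mathbb{T}^d}\Theta_h(\rho_\varepsilon)$ and $0\le r-\Theta_h(r)\le\beta$. Hence the boundary term is $O(\beta)$ rather than $O(1)$. The It\^o isometry, using $\rho_\varepsilon^2(1-\rho_\varepsilon)^2\le\beta^2$ on the band, then gives
\begin{equation*}
\mathbb{E}\Big(\beta^{-1}p_\varepsilon\big([t,t+h]\times\mathbb{T}^d\times[\tfrac{\beta}{2},\beta]\big)\Big)^2\lesssim\beta^2+\varepsilon\,\mathbb{E}\int_t^{t+h}\!\!\int_{\mathbb{T}^d}I_{\{\beta/2\le\rho_\varepsilon\le\beta\}}\frac{|\nabla\rho_\varepsilon|^2}{\rho_\varepsilon(1-\rho_\varepsilon)}\,dx\,ds+\varepsilon^2F_{3,\delta(\varepsilon)}^2h^2 .
\end{equation*}
The band near $1$ then follows by your reflection $\rho_\varepsilon\mapsto1-\rho_\varepsilon$.

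This reduces $\frac1h\mathbb{E}I_2^2$ to $\varepsilon^{-1}h^{1-2\varepsilon'}+\varepsilon F_{3,\delta(\varepsilon)}^2h$ plus the time-localized defect $\frac1h\mathbb{E}\int_t^{t+h}\int I_{\{\beta/2\le\rho_\varepsilon\le\beta\}}|\nabla\rho_\varepsilon|^2/(\rho_\varepsilon(1-\rho_\varepsilon))$. This is the same quantity that $I_{4,1,2}$ needs.

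\textbf{Remaining caveats.} The vanishing of that defect at a prescribed $t$ does not follow from a $\liminf_{\beta\to0}$ statement over a fixed time interval. A Lebesgue-point argument gives it for a.e.\ $t$: dominate by the monotone family with $I_{\{0<\rho_\varepsilon\le\beta_0\}}$, then let $\beta_0\downarrow0$. So without further input you obtain the identity for a.e.\ $t$ only, not every $t$.

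Finally, in $I_3$ the bound $\sigma^2\le1/4$ is not enough. Since $|\eta_\beta'|\sim\beta^{-1}$, it would leave $\varepsilon F_{3,\delta(\varepsilon)}^2h^{-1+2\varepsilon'}$, which diverges. What saves the estimate is $\sigma(\zeta)^2\le\min(\zeta,1-\zeta)\le\beta$ on the support of $\eta_\beta'$.
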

\begin{proof}
	The proof essentially follows the same strategy as that of Theorem~\ref{asymptotic-behavior}. However, since the coefficient $\sqrt{\zeta(1-\zeta)}$ now possesses two singular points, we need to introduce an additional truncation near $\zeta = 1$. More precisely, by applying the same argument as in the proof of Lemma~\ref{boundedness-kineticmeasure}, we obtain
\begin{align*}
\mathbb{E}\left(h^{-1 + \varepsilon'} 
p_{\varepsilon}\!\left([t, t + h] \times \mathbb{T}^d \times 
\left[\tfrac{h^{1 - \varepsilon'}}{2}, h^{1 - \varepsilon'}\right]
\cup
\left[1-h^{1 - \varepsilon'}, 1-\tfrac{h^{1 - \varepsilon'}}{2}\right]
\right)\right)^2 
\leq C(\rho_0),
\end{align*}
for all $t \in [0, T]$, $\varepsilon' \in (0, 1/4)$, $\varepsilon \in (0, 1)$, and $h \in (0, 1/4)$. Moreover, we introduce a truncation function to localize both singularities at $0$ and $1$. Fix $\beta \in (0, 1/4)$, and let $\eta_{\beta}$ be a smooth truncation function satisfying $\eta_{\beta}(\zeta) = 1$ for $\zeta \in [\beta,1-\beta]$, $\eta_{\beta}(\zeta) = 0$ for $\zeta \in [0,\beta/2] \cup [1-\beta/2,1]$, and 
$$
\eta'_{\beta} \lesssim \beta^{-1} I_{\left\{[\tfrac{\beta}{2}, \beta]\cup[1-\beta,1-\tfrac{\beta}{2}]\right\}}.
$$
Let $\chi_{\varepsilon} = I_{\{0 < \zeta < \rho_{\varepsilon}\}}$ denote the kinetic function. We then decompose
\begin{align*}
&\varepsilon^{-1/2} \left\langle \rho_{\varepsilon}(t + h) - \rho_{\varepsilon}(t), \phi \right\rangle \notag\\
=&\; \varepsilon^{-1/2} \left\langle \chi_{\varepsilon}(t + h) - \chi_{\varepsilon}(t), \eta_{\beta} \phi \right\rangle_{L^2(\mathbb{T}^d\times\mathbb{R}_+)}
+ \varepsilon^{-1/2} \left\langle \chi_{\varepsilon}(t + h) - \chi_{\varepsilon}(t), (1 - \eta_{\beta}) \phi \right\rangle_{L^2(\mathbb{T}^d\times\mathbb{R}_+)}.
\end{align*}
With this decomposition at hand, the argument of Theorem~\ref{asymptotic-behavior} can be adapted to yield the desired result.

\end{proof}

\noindent{\bf  Acknowledgements}\quad Research was sponsored by the Army Research Office and was accomplished under Grant Number W91INF-23-1-0230. The views and conclusions contained in this document are those of the authors and should not be interpreted as representing the official policies, either expressed or implied, of the Army Research Office or the U.S. Government. The U.S. Govemment is authorized to reproduce and distribute reprints for Government purposes notwithstanding any copyright notation herein. ND acknowledges the kind hospitality of TU Munich. 

We would like to thank Mario Ayala for helpful discussions.

\bibliographystyle{alpha}
\bibliography{errorestimates}

\end{document}